% ----------------------------------------------------------------
% AMS-LaTeX Paper ************************************************
% **** -----------------------------------------------------------
%%%%%%%%%%%%%%%%%%%%%%%%%%%%%%%%%%%%%%%%%%%%%
%          this is LATEX  file of           %
%                                           %
% B-orbits of square zero in nilradical     %
%                                           %
% of the symplectic algebra                 %
%                                           %
%version for Transformation groups          %
%%%%%%%%%%%%%%%%%%%%%%%%%%%%%%%%%%%%%%%%%%%%%
%%%%%%%%%%%%%%%%%%%%%%%%%%%%%%%%%%%%%%%%%%%%%
%        version 20 September 2015          %
%%%%%%%%%%%%%%%%%%%%%%%%%%%%%%%%%%%%%%%%%%%%%

\documentclass[12pt]{amsart}
\usepackage{amsmath,amssymb,amsfonts,latexsym}
\usepackage{mathrsfs}%pour les caracteres en calligraphie
\usepackage[dvips]{geometry}
\usepackage{array}
\geometry{a4paper, top=4.5cm, bottom=5cm, right=3cm, left=3cm}

\DeclareFontFamily{T1}{calligra}{}
\DeclareFontShape{T1}{calligra}{m}{n} {<-> callig15}{}

% ----------------------------------------------------------------
\vfuzz2pt % Don't report over-full v-boxes if over-edge is small
\hfuzz2pt % Don't report over-full h-boxes if over-edge is small
% THEOREMS -------------------------------------------------------
\newtheorem*{thm}{Theorem}

\newtheorem*{lem}{Lemma}

\newtheorem*{prop}{Proposition}
\theoremstyle{definition}% Pour ne pas avoir les \'enonc\'es en italique.

\theoremstyle{remark}

\newtheorem*{rem}{Remark}

\numberwithin{equation}{subsection}
 %pour le titre
% MATH -----------------------------------------------------------

%

\newcommand{\codim}{{\rm codim\,}}

\newcommand{\ov}{\overline}
% ----------------------------------------------------------------
%-COMMAND------------------------------------------------------------------

%

\newcommand{\B}{\mathbf{B}}

\newcommand{\X}{\mathcal{X}}
\newcommand{\W}{\mathcal{W}}

\newcommand{\Or}{\mathcal{O}}

\newcommand{\G}{\mathfrak{g}}
\newcommand{\Sl}{\mathfrak{s}\mathfrak{l}}

\newcommand{\gh}{\mathfrak{h}}

\newcommand{\bo}{\mathfrak{b}}

\newcommand{\nil}{\mathfrak{n}}

\newcommand{\Sp}{\mathfrak{s}\mathfrak{p}}

\newcommand{\Co}{\mathbb{C}}

\newcommand{\vb}{\vrule height 20pt depth 16pt}
\newcommand{\vbs}{\vrule height 10pt depth 10pt}

\newcommand{\vsa}{\noalign{\vskip -3pt}}
\newcommand{\vsal}{\noalign{\vskip -7pt}}
\newcommand{\ssa}{\noalign{\vskip -2pt}}
\newcommand{\ssal}{\noalign{\vskip -4pt}}
\newcommand{\ssod}{\noalign{\vskip -1pt}}

%

%-------------------- ------------------------------------------------------

\begin{document}
\title[ B-orbits of square zero]{$\B-$orbits of square zero in nilradical of the symplectic algebra}
\author[N. Barnea]{Nurit Barnea}
\thanks{N. Barnea: partially supported by Israel Scientific Foundation grant 797/14}
\address{Department of Mathematics,
University of Haifa, Haifa 31905, Israel}
\email{barnea.nurit@gmail.com}
\author[A. Melnikov]{Anna Melnikov}
\address{Department of Mathematics,
University of Haifa, Haifa 31905, Israel}
\email{melnikov@math.haifa.ac.il}

%-------- abstract --------
\begin{abstract}
Let $SP_n(\Co)$ be the symplectic group and $\mathfrak{sp}_n(\Co)$ its Lie algebra. 
Let $\B$ be a Borel subgroup of $SP_n(\Co)$, $\mathfrak{b}={\rm Lie}(\B)$ and 
$\mathfrak n$ its nilradical. Let $\mathcal X$ be a subvariety of elements of square 0 in $\mathfrak n.$ 
$\B$ acts adjointly on $\mathcal X$.
In this paper we describe topology of orbits $\mathcal X/\B$ in terms of symmetric link patterns.

Further we apply this description to the computations of the
closures of orbital varieties of nilpotency order 2 and to their intersections. 
In particular we show that all the intersections of codimension 1 are irreducible.
\end{abstract}
\maketitle
%------- Introduction --------
\section{Introduction}
\subsection{Nilpotent orbits in a semi-simple Lie algebra}\label{1.1}
Let $G$ be a connected, reductive, linear algebraic group
over $\mathbb{C}$ and let $\mathfrak{g}$ be its Lie algebra.
The group $G$ acts on $\mathfrak{g}$ via the adjoint action
$(g,x)\mapsto g.x=gxg^{-1}$. For $x\in \mathfrak g$ let 
$\mathcal O_x:=G.x$ denote the orbit of $x$ under this action.

Fix a Borel subgroup $\B\subset G$ and let $\mathfrak b$ and
$\mathfrak{n}\subset \mathfrak{b}$
be the corresponding Borel subalgebra and its nilradical.

An adjoint orbit ${\mathcal O}\subset\mathfrak{g}$ is called {\em nilpotent}
if ${\mathcal O}\cap\mathfrak{n}\not=\emptyset$. Obviously, ${\mathcal O}$ is nilpotent iff 
$\mathcal O=\mathcal O_x$ for some $x\in \mathfrak n.$ 

Nilpotent orbits in a semi-simple Lie algebras and their topology are defined completely by the 
nilpotent orbits in the corresponding simple Lie algebras. So from now on we speak only about simple classical Lie algebras. 

For simple Lie algebras of types $A_n,\ B_n,\ C_n$ and $D_n$
a nilpotent orbit $\mathcal O_x$ is defined by Jordan form of $x$ (apart from very even cases in $D_n$). 
In turn, since $x$ is nilpotent so that 0 is its unique eigenvalue, Jordan form of $x$ can be  regarded as a list of Jordan block lengths, 
written in a non-increasing order, which in turn can be regarded as a partition of $n+1$ in case of $A_n$ or as an admissible partition of $2n+1$ 
in case of $B_n$ or of $2n$ in cases of $C_n$ and $D_n.$ In such a way we get a bijection between nilpotent orbits and corresponding set of partitions. 
Gerstenhaber (\cite{Ge}) provided the expression for the dimension of a nilpotent orbit in terms of partitions and defined the partial order on partitions corresponding to the inclusion relations on nilpotent orbit closures in case of $A_n$. 
Further Kraft and Procesi (\cite{Kr-Pr}) provided the expression for the dimension of a nilpotent orbit in terms of admissible partitions in cases $B_n,\ C_n,\ D_n$ and showed that the restriction of the order on partitions
mentioned above on $2n+1$ (resp. $2n$) to the admissible ones defines inclusions of nilpotent orbit closures.

\subsection{$\B-$orbits in $\mathfrak n$ and spherical orbits}\label{1.2} Consider an adjoint action of $\B$ on $\mathfrak n.$ 
For $x\in \mathfrak n$ let $\mathcal B_x:=\B.x$. Obviously, $\mathcal B_x\subset\mathfrak n$, however, there is no  elegant theory of 
$\B-$orbits in this case in the spirit of the theory of nilpotent orbits from \ref{1.1}. 
The first obstacle is that the number of $\B-$orbits is infinite.

A $G-$orbit $\Or$ in $\mathfrak g$ is called {\em spherical} if
it admits a dense $\B-$orbit. By the result of Brion and Vinberg \cite{Br,Vi} $\Or$ is spherical if and only if 
$\mathcal O$ is a union of finite number of $\B-$orbits. 
Panyushev \cite{Pa1} classified all the spherical nilpotent orbits in $A_n,\ B_n,\ C_n$ and $D_n.$

The description of $\mathcal B_x\subset \Or$ for a spherical nilpotent $\Or$ is much simpler then for a generic nilpotent $\mathcal O$. 
In particular, the description of $\B-$orbits in
$\Or\cap\mathfrak n$ for a spherical orbit is relatively simple. 
The aim of this paper is to describe such $\B-$orbits for type $C_n.$
  
\subsection{$\B-$orbits of square zero in $\mathfrak n$ for $\mathfrak{sl}_n(\Co)$ and link patterns}\label{1.3} 
Let us first recall the results for type $A_n$.
	
In $\mathfrak{sl}_n(\Co)$ by Panyushev $\Or_x$ is spherical iff $x^2=0.$ 
In this case the description of the topology of $\B-$orbits
in $\mathcal O_x\cap\nil$ is made in terms of link patterns (\cite{Me1, Me2}). 
To explain it we need to introduce more notation.

Let $\B_{SL_n}$ be a fixed Borel subgroup of $SL_n$ and 
let $\mathfrak b_{\Sl_n}={\rm Lie} (\B_{SL_n})=\mathfrak h_{\Sl_n}\oplus\mathfrak n_{\Sl_n}$.

Recall that the root system, positive roots and base of simple roots  in $\Sl_n$ (with respect to $\mathfrak h_{\Sl_n}^*$) are defined in a standard way as
\begin{itemize}
\item{} $R:=\{e_j-e_i\}_{1\leq i\ne j\leq n}$;
\item{} $R^+:=\{e_j-e_i\}_{1\leq i<j\leq n}$;
\item{} $\Pi:=\{e_{i+1}-e_i\}_{1\leq i\leq n-1}$.
\end{itemize}
For $1\leq i\ne j\leq n$ put $Y_{(i,j)}:=Y_{e_j-e_i}$ to be the corresponding root vector of $\Sl_n$ so that 
$\mathfrak n_{\Sl_n}=\bigoplus\limits_{1\leq i<j\leq n}{\Co} Y_{(i,j)}$.

The roots $\alpha,\beta\in R$ are called {\it strongly orthogonal} if $\alpha\pm \beta\not\in R$. 
In $\Sl_n$ the roots $e_j-e_i,\ e_l-e_k$ are strongly orthogonal iff $\{i,j\}\cap\{k,l\}=\emptyset$.

We call a subset $\{\alpha_s\}_{s=1}^t\subset R^+$ {\em strongly orthogonal} if for any 
$1\leq s_1<s_2\leq t$ the roots $\alpha_{s_1},\alpha_{s_2}$ are strongly orthogonal.

A {\it link pattern} $L$ on $n$ points with $k$ arcs is an unoriented graph with  $n$
linearly ordered vertices labelled by integers from $1$ to $n$ (from left to right)
and $k$ edges connecting $k$ disjoint pairs of points $(i_1,j_1),\ldots,(i_k,j_k)$ (where $i_s<j_s$).
We draw the edges as arcs over the line of points. Put $l(L)$ to be the number of arcs in $L.$ 
Put ${\bf LP}_n$ to be the set of all link patterns on $n$ points and
${\bf LP}_n(k):=\{L\in {\bf LP}_n\ :\ l(L)=k\}$. 

Given a strongly orthogonal subset $S=\{e_{j_s}-e_{i_s}\}_{s=1}^k\in R^+$ we attach to it a link pattern $L_S=(i_1,j_1)\ldots(i_k,j_k)\in {\bf LP}_n(k)$. 
Obviously, in such a way we get a bijection between the sets of strongly orthogonal roots in $R^+$ and ${\bf LP}_n.$ 

For $L=(i_1,j_1)\ldots (i_k,j_k)\in {\bf LP}_n(k)$ let
$Y_L:=\sum\limits_{s=1}^k Y_{(i_s,j_s)}$ denote the corresponding sum of root vectors and let $\mathfrak B(Y_L)$ denote $\B_{SL_n}-$orbit of $Y_L.$

We define a (rank) matrix $R_L$ for $L\in{\bf LP}_n$ as follows. 
Set $(s,t)\in L$ if $(s,t)$ is an arc of $L$. Let $|A|$ denote the cardinality of a set $A$. 
For $i,j\:\ 1\leq i<j\leq n$ put
$$l(L,i,j):=|\{(s,t)\in L\ :\ i\leq s<t\leq j\}|.$$

$R_L$ is an upper triangular integer $n\times n$ matrix defined by:
$$(R_L)_{i,j}:=\left\{\begin{array}{ll}
							l(L,i,j)	& {\rm if}\ \ 1\leq i< j\leq n;\\
							0 				& {\rm otherwise.}\\
\end{array}\right.$$

Define a partial order on $M_{n\times n}(\mathbb{R})$ as follows. 
For $A,B\in M_{n\times n}(\mathbb{R})$ put $A\preceq B$ if for any $1\leq i,j\leq n$ one has $(A)_{i,j}\leq (B)_{i,j}.$

This order induces a partial order on ${\bf LP}_n$ by putting $L'\preceq L$ for $L,L'\in \bf{LP}_n$ 
if $R_{L'}\preceq R_L.$

Let $\mathcal Y_n^{(2)}:=\{y\in \mathfrak n_{\Sl_n}\ :\ y^2=0\}$ denote  the subvariety of elements of square zero in $\mathfrak n_{\Sl_n}$. For a variety $\mathcal A\subset \mathfrak g $ let $\overline {\mathcal A}$ denote its Zariski closure in $\mathfrak g$.
By \cite{Me2} one has
\begin{thm}
	(i) $\mathcal Y_n^{(2)}=\bigsqcup\limits_{L\in {\bf LP}_n}\mathfrak B(Y_L)$.\\
	(ii) For $L\in{\bf LP}_n$ one has $\overline{\mathfrak B}(Y_L)=\bigsqcup\limits_{L'\preceq L}\mathfrak B(Y_{L'})$.
\end{thm}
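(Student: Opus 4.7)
The plan is to classify $\mathbf{B}$-orbits on $\mathcal{Y}_n^{(2)}$ by rank invariants. For $1 \le i \le j \le n$, define $r_{ij}(y) := \Rank(y[i..j,\,i..j])$, the rank of the principal submatrix on rows and columns $\{i,\dots,j\}$. The first step is to verify $\mathbf{B}$-invariance: partitioning rows and columns into $\{1,\dots,i-1\}$, $\{i,\dots,j\}$, $\{j+1,\dots,n\}$ and expanding $(byb^{-1})_{22}$ in block form, strict upper triangularity of $y$ together with upper triangularity of $b^{\pm 1}$ leaves only the contribution $b_{22}y_{22}b_{22}^{-1}$, so $\Rank y_{22}$ is preserved. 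A direct inspection of $Y_L$ then yields $r_{ij}(Y_L)=l(L,i,j)=(R_L)_{ij}$.

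For part (i), observe first that the alternating difference $(R_L)_{ij}-(R_L)_{i+1,j}-(R_L)_{i,j-1}+(R_L)_{i+1,j-1}$ equals $1$ if $(i,j)\in L$ and $0$ otherwise, so $L$ is recovered from $R_L$; hence distinct link patterns give distinct $\mathbf{B}$-orbits. The surjectivity of $L\mapsto\mathfrak{B}(Y_L)$ will be established by a normalization algorithm. The $\mathbf{B}$-action implements the elementary operations \emph{add row $k$ to row $r$ for $k>r$}, \emph{add column $k$ to column $r$ for $k<r$}, together with diagonal rescaling. A non-zero entry $y_{ij}$ which is simultaneously the largest row in column $j$ and the smallest column in row $i$ will be declared a pivot: rescale it to $1$ and use the admissible row and column operations to clear the other entries above it in column $j$ and to its right in row $i$. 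Iterating this yields a matrix with at most one non-zero entry per row and per column, and the hypothesis $y^2=0$ --- preserved under $\mathbf{B}$-conjugation --- forbids a pivot chain of the form $(i,j),(j,k)$ (which would force $y^2_{ik}\ne 0$), so the pivot set is a link pattern $L$ and the reduced form is $Y_L$.

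For part (ii), the inclusion $\overline{\mathfrak{B}}(Y_L)\subseteq\bigsqcup_{L'\preceq L}\mathfrak{B}(Y_{L'})$ is immediate from the invariants: each set $\{y:r_{ij}(y)\le c\}$ is a determinantal variety, so the intersection $\{y\in\mathcal{Y}_n^{(2)}:r_{ij}(y)\le(R_L)_{ij}\text{ for all }i,j\}$ is Zariski closed, contains $Y_L$, and by part (i) coincides with the claimed disjoint union. The reverse inclusion reduces, via transitivity of $\preceq$, to the cover relations $L'\lessdot L$; for each such cover the plan is to exhibit a one-parameter family $\exp(tX)\cdot Y_L\cdot\exp(-tX)$ with $X\in\bo$, possibly combined with a torus rescaling, whose degenerate limit lies in $\mathfrak{B}(Y_{L'})$. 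The main obstacle is precisely this combinatorial case analysis of minimal moves in $(\mathbf{LP}_n,\preceq)$: enumerating the local degenerations of link patterns (contracting an arc, sliding an endpoint past a neighbor, resolving a crossing into a nesting, and so on) and producing an explicit root-subgroup degeneration for each. A secondary subtlety in (i) is verifying that the normalization terminates at a genuine link pattern rather than some more complicated partial-permutation shape, which is exactly where $y^2=0$ is indispensable.
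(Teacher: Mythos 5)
The paper does not actually prove this theorem: it is imported verbatim from \cite{Me1,Me2} (the $A_n$ case), and the body of the paper only proves the analogous $C_n$ statements in Sections 3--4, partly by \emph{reducing} to this theorem. So your proposal has to be judged against the cited works and against the paper's parallel $C_n$ arguments rather than against an in-paper proof. With that caveat, your framework is the right one and is essentially the one used there: the ranks $r_{ij}$ of the principal submatrices on rows and columns $i,\dots,j$ are $\B$-invariant (your block computation is correct), they evaluate to $(R_L)_{i,j}$ on $Y_L$, the alternating difference recovers $L$ from $R_L$, and the determinantal conditions give the closed inclusion $\overline{\mathfrak B}(Y_L)\subseteq\bigsqcup_{L'\preceq L}\mathfrak B(Y_{L'})$ once (i) is in hand. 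These parts are sound.

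The gaps are in the two places you yourself flag as ``plans.'' First, in (i): conjugation by $I+aE_{i'i}$ ($i'<i$) adds $a$ times row $i$ to row $i'$ \emph{and simultaneously} subtracts $a$ times column $i'$ from column $i$; so clearing the entries above a pivot $(i,j)$ in column $j$ injects new entries into column $i$ (and dually for the row-clearing step), and these can destroy pivots already normalized. Asserting that iteration ``yields a matrix with at most one non-zero entry per row and per column'' therefore needs an actual termination argument --- a careful ordering of pivots or, as the paper does for $C_n$ in \ref{4.1}, an induction on $n$ via a projection that strips the last row/column pair. Second, and more seriously, the reverse inclusion in (ii) is the substance of the theorem, not a routine step: you must (a) identify the covering relations of $\preceq$ on ${\bf LP}_n$ explicitly (these are the moves (M1)--(M3) recalled in \ref{5.3}, plus the rank-dropping moves of \ref{5.2}), (b) produce a one-parameter degeneration for each, and (c) prove that this list exhausts the covers, i.e.\ that no $L'$ with $R_{L'}\preceq R_L$ is missed. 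Step (c) in particular is not supplied by ``transitivity''; it is the main combinatorial content of \cite{Me1}. As written, your argument proves disjointness in (i) and the forward inclusion in (ii), and correctly names, but does not carry out, the remaining work.
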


\subsection{$\B-$orbits of square zero in $\mathfrak n$ for $\mathfrak{sp}_n(\Co)$ and symmetric link patterns}\label{1.4}
In $\mathfrak{sp}_n(\Co)$ by Panyushev a nilpotent orbit $\mathcal O_x$ is spherical iff $x^2=0$ exactly as in type $A_n.$ 
In this paper we describe $\B-$orbits of $\mathcal O_x\cap\nil$ in this case in terms of symmetric link patterns.
	
Consider $SP_{2n}(\mathbb{C})$ -- the symplectic group of degree
$2n$ over $\mathbb{C}$, and let $\G=\Sp_{2n}$ be its Lie algebra
with the (standard) Cartan subalgebra $\gh$.
We define roots system, positive roots and a base
of simple roots, respectively, in (one of two) standard ways, namely:
\begin{itemize}
\item{} $\Phi:=\left\{\pm e_i\pm e_j,\ \pm 2e_k\ |\ 1\leq i,j,k \leq n,\ i\neq j\right\}$;
\item{} $\Phi^+:=\left\{e_j\pm e_i,\ 2e_k\ |\ 1\leq i<j \leq n,\ 1\leq k \leq n\right\}$;
\item{} $\Delta:=\left\{e_{i+1}-e_{i},\ 2e_1\ |\ 1\leq i<n\ \right\}$.
\end{itemize}

Recall that a pair of roots $\{\pm(e_i\pm e_j),\ \pm(e_k\pm e_l)\}$ 
(resp. $\{\pm(e_i\pm e_j),\pm 2e_k\}$ or $\{\pm 2e_i,\pm 2e_k\}$)
is strongly orthogonal iff $\{i,j\}\cap \{k,l\}=\emptyset$ (resp. $k\not\in\{i,j\}$ or $i\ne k$).

Given a strongly orthogonal subset $S=\{\alpha_s\}_{s=1}^k\subset \Phi^+$ we associate to it a symmetric link pattern 
$L_S$ on $2n$ points $-n,\ldots,-1,1\ldots,\ldots,n$
by $(-i_s,i_s)\in L_S$ if $\alpha_s=2e_{i_s}$;  $(i_s,j_s),(-i_s,-j_s)\in L_S$ if $\alpha_s=e_{j_s}-e_{i_s}$ and 
$(-i_s,j_s),(i_s,-j_s)\in L_S$ if $\alpha_s=e_{j_s}+e_{i_s}$. We put $\alpha\in L$ if the corresponding arcs are in $L.$ 
In such a way we get a bijection between the subsets of strongly orthogonal roots in $\Phi^+$ and the symmetric link patterns on $2n$ points. 
Put ${\bf SLP}_{2n}$ to be the set of all symmetric link patterns on points $-n,\ldots,-1,1,\ldots,n$ 
and ${\bf SLP}_{2n}(k)$ to be the subset of the symmetric link patterns with $k$ arcs.
  
The corresponding Borel subgroup (resp. subalgebra) is denoted by $\B_{2n}$ (resp. $\bo_{2n}$). 
For $\alpha\in\Phi$ let $X_\alpha$ denote its root vector, so that the nilradical
%The Borel subgroup is denoted by $\B_{2n}$. Its corresponding Borel subalgebra is denoted by $\bo_{2n}$,
$\nil_{2n}=\bigoplus\limits_{\alpha\in\Phi^+}{\mathbb C}X_\alpha$. 

Given a strongly orthogonal subset $S\subset \Phi^+$ set $X_S=\sum\limits_{\alpha\in S}X_\alpha$. 
Equivalently, for 
$L=\{(i_s,\pm j_s),(-i_s,\mp j_s)\}_{s=1}^m\cup\{(-k_t,k_t)\}_{t=1}^l\in {\bf SLP}_n$ let
$$X_L:=\sum\limits_{s=1}^m X_{e_{j_s}\mp e_{i_s}}+\sum\limits_{s=1}^lX_{2e_{k_s}}$$
denote the corresponding sum of root vectors.

For example for $L=(1,2)(-1,-2)(3,-6)(-3,6)(-4,4)\in {\bf SLP}_{12}(5)$ 
\begin{center}
\begin{picture}(130,60)(20,20)
\put(-40,35){$L=$}
 \multiput(-10,40)(20,0){12} {\circle*{3}} % points on
 \put(-17,25){-6}
 \put(5,25){-5}
 \put(25,25){-4}
 \put(45,25){-3}
 \put(65,25){-2}
 \put(85,25){-1}
 \put(107,25){1}
 \put(127,25){2}
 \put(147,25){3}
 \put(167,25){4}
 \put(188,25){5}
 \put(208,25){6}
 \qbezier(-10,40)(70,105)(150,40)
 \qbezier(50,40)(130,105)(210,40)
 %\qbezier(30,40)(50,70)(70,40)
 \qbezier(30,40)(100,105)(170,40)
 \qbezier(70,40)(80,60)(90,40)
 \qbezier(110,40)(120,60)(130,40)
\end{picture}
\end{center}
one has $X_L=X_{e_2-e_1}+X_{e_6+e_3}+X_{2e_4}$.
  
Let $\mathcal X_{2n}^{(2)}:=\{x\in\nil_{2n}\ :\ x^2=0\}$ denote the subvariety of elements of square zero in $\nil_{2n}$.
For $L\in {\bf SLP}_{2n}$ let $\mathcal B_L:=\B_{2n}.X_L.$  
As we show in \ref{4.1}
$$\mathcal X_{2n}^{(2)}=\bigsqcup\limits_{L\in {\bf SLP}_{2n}}\mathcal B_L$$

Further in \ref{4.2} we provide the expression for $\dim \mathcal B_L$ in terms of $L.$

In Section \ref{5} we show that the inclusion of $\B-$orbit closures in $\mathcal X_{2n}^{(2)}$ is defined by the restriction of order
$\prec$ on ${\bf LP}_{2n}$ to the subset ${\bf SLP}_{2n}$, namely
$$\overline{\mathcal B}_L=\bigsqcup\limits_{\genfrac{}{}{0pt}{}{L'\in {\bf SLP}_{2n}}{L'\preceq L}}\mathcal B_{L'}$$

\subsection{Orbital varieties of square zero in $C_n$}\label{1.5} 
Further in Section \ref{6} we apply the results on 
$\{\mathcal B_L\}_{L\in{\bf SLP}_{2n}}$ to orbital varieties of square zero in $C_n$. 

Let $\mathcal O$ be a nilpotent orbit in a semi-simple $\mathfrak g$. 
Then the intersection ${\mathcal O}\cap\mathfrak{n}$ is
a quasi-affine algebraic variety, in general reducible
and by Spaltenstein \cite{Sp},
it is equidimensional of dimension $\frac{1}{2}\dim{\mathcal O}$.
The irreducible components of ${\mathcal O}\cap \mathfrak{n}$
are called {\em orbital varieties}. 

In $\Sp_{2n}$ an orbital variety $\mathcal V\subset\mathcal O_x\cap\nil_{2n}$ is labeled by a standard domino tableaux $T$ of shape 
$\lambda\vdash 2n$
where $\lambda$ is the corresponding list of the blocks in Jordan form of $x$ (cf. \ref{a2}). We denote it by $\mathcal V_T$.

If $\mathcal O_x$ is a nilpotent orbit of square zero, then $\mathcal V\in \mathcal O_x\cap\nil_{2n}$ is a union of finite number of $\B_{2n}-$orbits so that $\mathcal V$ admits a dense $\B_{2n}-$orbit. Moreover, $\mathcal B_L$ is dense in some orbital variety if and only if $\dim \mathcal B_L=\frac{1}{2}\dim\mathcal O_{X_L}.$ 
Such $\mathcal B_L$ are called {\it maximal} in $\{\mathcal B_{L'}\}_{L'\in {\bf SLP}_{2n}(k)}$ 
(where $k$ is the number of arcs in $L.$) We define this byjection in \ref{a3}. 

Further we consider the first two questions on orbital varieties of square zero:
\begin{itemize}
\item[(i)] Description of an orbital variety closure. Let $\mathcal V_T$ be an orbital variety in 
$\mathcal O_x$ and let $\mathcal O'\subset \overline{\mathcal O}_x$. What are the components of 
$\overline{\mathcal V}_T\cap\mathcal O'$? How to define whether $\mathcal V_S\subset \overline{\mathcal V}_T$? We answer both these questions in \ref{a5}. We show that in general $\overline{\mathcal V}_T\cap\mathcal O'$ contains components of different dimensions and in general it can happen that there are no orbital varieties in the intersection. This picture is very different from the closure of an orbital variety of square 0 in $\Sl_n(\Co)$, where the closure of an orbital variety is a union of orbital varieties only. This means that although the results on $\B-$orbits of square zero in types $A_n$ and $C_n$ look very similar formally, the general pictures are very different.  
\item[(ii)] Another important question (it can be translated straightforwardly to the components of a Springer fiber) is whether the intersection of two orbital varieties of square zero of codimension 1 is always irreducible? Here the answer provided in \ref{a6} is positive, exactly as in $A_n$ case.
\end{itemize}
In a subsequent paper we use the results on $\B_{2n}-$orbits in order to prove a Panushev's conjecture \cite{Pa2} in types $B_n,C_n$ and $D_n$.

We also use them in another subsequent paper in order to classify orbital varieties of square 0 in $C_n$ according to their singularity.

\subsection{The structure of the paper}
In Section 2 we provide all the preliminaries and notation essential in what follows. 
In Section 3 we provide the classification of $\B_{2n}-$orbits of square zero in $\nil_{2n}$ and give the expression for their dimensions.

Section 4 is devoted to the construction of $\mathcal B_L$ closure. The results here rely heavily on $A_n$ case and are combinatorially involved. However, all the computations are similar. So we provide in detail only the computation in the most involved cases. 
Further, in Appendix A we provide a short but full construction of the boundary of $\mathcal B_L$.
Finally, in Section 5 we apply the results to orbital varieties. The reader can find the index of  notation used intesively in the paper at the end of the paper.
   
%------- Preliminaries and notation --------
\section{Preliminaries and notation}
\subsection{Root vectors}\label{2.3}
Recall the standard root system $\Phi$ of $C_n$ defined in \ref{1.4}.
We consider a standard idetification of root vectors, namely:
\begin{itemize}
\item{} $X_{e_j-e_i}:=E_{n+1-j,n+1-i}-E_{2n+1-i,2n+1-j}$
\item{} $X_{e_j+e_i}:=E_{n+1-i,2n+1-j}+E_{n+1-j,2n+1-i}$
\item{} $X_{-e_j-e_i}:=E_{2n+1-j,n+1-i}+E_{2n+1-i,n+1-j}$
\item{} $X_{2e_i}:=E_{n+1-i,2n+1-i}$
\item{} $X_{-2e_i}:=E_{2n+1-i,n+1-i}$
\end{itemize}
where $E_{i,j}$ is the elementary matrix having a "1" as its
$(i,j)$-entry and zeros elsewhere.

The corresponding Borel subgroup is a semiprime product $\B_{2n}={\bf T}_{2n}\ltimes {\bf U}_{2n}$ of the (torus) subgroup ${\bf T}_{2n}$ 
of diagonal matrices and the subgroup  ${\bf U}_{2n}$ of unipotent matrices.
Put $T_i(a)$ to be a diagonal matrix with $a \in \mathbb{C}$ on place $n+1-i$, $a^{-1}$ on place $2n+1-i$ and 1 on the other places of the main diagonal.
Put $U_{\alpha}(a):=I_{2n}+aX_{\alpha}$ for $\alpha\in\Phi^+$. 
One has ${\bf T}_{2n}=\langle T_i(a)\ : 1\leq i\leq n,\ a\in \Co \rangle$ and ${\bf U}_{2n}=\langle U_{\alpha}(a)\ :\alpha\in\Phi^+,\ a\in \Co\rangle.$ 
%Of course in both cases this is not the minimal set of generators.

For future references we provide the table of actions of these generators on positive root vectors:\\
For $i,j: 1\leq i<j\leq n$ and $l,k: 1\leq k<l\leq n$ one has
$$ T_i(a). X_{e_l-e_k}=\left\{\begin{array}{ll}aX_{e_l-e_k}&{\rm if}\ i=l;\\
									      a^{-1}X_{e_l-e_k}&{\rm if}\ i=k;\\
												X_{e_l-e_k}&{\rm otherwise.}\\
												\end{array}\right. \,  \
 T_i(a). X_{e_l+e_k}=\left\{\begin{array}{ll}aX_{e_l+e_k}&{\rm if}\ i\in\{k,l\};\\
																						 X_{e_l+e_k}&{\rm otherwise.}\\
																						 \end{array}\right.$$
$$ T_i(a). X_{2e_j}=\left\{\begin{array}{ll}a^2X_{2e_j}&{\rm if}\ i=j;\\
																						X_{2e_j}&{\rm otherwise.}\\
																						\end{array}\right.$$
For $i,j : 1\leq i<j\leq n$; $k,l: 1\leq k<l\leq n$ and $m: 1\leq m\leq n$ one has: 
\begin{itemize}	
\item[]$U_{e_j-e_i}(a).X_{e_l-e_k}=\left\{\begin{array}{ll}X_{e_l-e_k}+aX_{e_j-e_k}&{\rm if}\ i=l;\\
																													 X_{e_l-e_k}-aX_{e_l-e_i}&{\rm if}\ j=k;\\
																													 X_{e_l-e_k}&{\rm otherwise}.\\
																													\end{array}\right.$
\item[]$U_{e_j-e_i}(a).X_{e_l+e_k}=\left\{\begin{array}{ll}X_{e_l+e_k}+aX_{e_j+e_k}&{\rm if}\ i=l\ \&\ j\neq k;\\
																													 X_{e_l+e_k}+aX_{e_j+e_l}&{\rm if}\ i=k\ \&\ j\neq l;\\
																													 X_{e_l+e_k}+2aX_{2e_l}&{\rm if}\ \left\{i,j\right\}=\left\{k,l\right\};\\
																													 X_{e_l+e_k}&{\rm otherwise}.\\
																													\end{array}\right.$
\item[]$U_{e_j-e_i}(a).X_{2e_m}=\left\{\begin{array}{ll}X_{2e_m}+aX_{e_j+e_m}+a^2X_{2e_j}&{\rm if}\ i=m;\\
																												X_{2e_m}&{\rm otherwise}.\\
																												\end{array}\right.$\\
\item[]$U_{e_j+e_i}(a).X_{e_l-e_k}=\left\{\begin{array}{ll}X_{e_l-e_k}-aX_{e_l+e_i}&{\rm if}\ j=k\ \&\ i\neq l;\\
																													 X_{e_l-e_k}-aX_{e_l+e_j}&{\rm if}\ i=k\ \&\ j\neq l;\\
																													 X_{e_l-e_k}-2aX_{2e_l}&{\rm if}\ \left\{i,j\right\}=\left\{k,l\right\};\\
																													 X_{e_l-e_k}&{\rm otherwise}.\\
																													\end{array}\right.$\\
\item[]$U_{e_j+e_i}(a).X_{e_l+e_k}=X_{e_l+e_k}; \quad U_{e_j+e_i}(a).X_{2e_m}=X_{2e_m}.$\\
%\item[] $  U_{e_i+e_j}(a).X_{2e_k}=X_{2e_k}.$\\
\item[]$U_{2e_m}(a).X_{e_l-e_k}=\left\{\begin{array}{ll}X_{e_l-e_k}-aX_{e_k+e_l}&{\rm if}\ m=k;\\
																												X_{e_l-e_k}&{\rm otherwise}.\\
																												\end{array}\right.$\\
\item[]$U_{2e_m}(a).X_{e_l+e_k}=X_{e_l+e_k};\quad U_{2e_m}(a).X_{2e_k}=X_{2e_k}.$\\
\end{itemize}

%-----------------
\subsection{Nilpotent orbits in $C_n$}\label{2.5}
Recall that a {\em partition} of a positive integer $n$ is a non-increasing sequence
$\lambda:=(\lambda_1,\geq \lambda_2\ldots,\geq \lambda_k)$ of positive integers
such that $\sum_{i=1}^k{\lambda_i}=n$. Let ${\mathcal P}(n)$ denote the set of all partitions.
Sometimes we will write partitions with multiplicities, that is $\lambda=(\lambda_1^{m_1},\ldots,\lambda_r^{m_r})$ where
$\lambda_i>\lambda_{i+1}$ and $m_i$ is its multiplicity. We will omit $m_i=1$ in these cases.

Set
${\mathcal P}_1(2n):=\{(\lambda_1^{m_1},\ldots,\lambda_r^{m_r})\in {\mathcal P}(2n)\ :\ \lambda_i\ {\rm is\ odd}\ \Rightarrow m_i\ {\rm is\ even}\}.$
Recall that the Jordan form of $x\in \nil_{2n}$ is $J(x)=\lambda$ (as a list of lengths of Jordan blocks) 
where $\lambda\in {\mathcal P}_1(2n).$ Set $\mathcal O_\lambda:=\mathcal O_x$ in this case.

Given a partition $\lambda=(\lambda_1,\ldots,\lambda_k)\in {\mathcal P}_1(2n)$, for $i\ :\ 1\leq i\leq \lambda_1$
put $r_i:=|\{j\ :\ \lambda_j=i\}|$ and $s_i=|\{j\ :\ \lambda_j\geq i\}|.$ One has (cf. \cite[6.1.4]{Co-Mc})
$$\dim(\Or_\lambda)=2n^2+n-\frac{1}{2}\sum\limits_{i=1}^{\lambda_1}s_i^2-\frac{1}{2}\sum\limits_{i\ {\rm odd}} r_i$$

To define the closure of a nilpotent orbit we have to introduce a dominance order on partitions. 
For $\lambda=(\lambda_1,\ldots,\lambda_k),\mu=(\mu_1,\ldots,\mu_m)\in {\mathcal P}(n)$ 
we put $\lambda\geq \mu$ if for any $i\ :\ 1\leq i\leq \min\{k,m\}$ one has
$\sum\limits_{j=1}^i\lambda_j\geq\sum\limits_{j=1}^i\mu_j.$ 
For ${\mathcal O}_\lambda\in \Sp_{2n}$ one has (cf. \cite[6.2.5]{Co-Mc})
$$\overline\Or_\lambda:=\coprod\limits_{\genfrac{}{}{0pt}{}{\mu\in{\mathcal P}_1(2n)}{\mu\leq \lambda}}\Or_\mu.$$

%----------- Spherical -------------
\subsection{Spherical nilpotent orbits in $C_n$}
Since in $C_n$ a nilpotent orbit is spherical if and only if it is of nilpotent order 2 we consider in what follows only $\Or_\lambda$
with $\lambda=(2^k,1^{2l})$ where $k+l=n.$ In this case $r_1=2l$ and $s_1=k+2l,\ s_2=k,$ so that
$$\dim(\Or_{(2^k,1^{2l})})=k(k+2l+1).$$

Note that a partial dominance order on partitions becomes linear when restricted to partitions of type $(2^i,1^j).$
Note also that for any $k$ one has $(2^k,1^{2n-2k})\in {\mathcal P}_1(2n)$ so that
$$\overline\Or_{(2^k,1^{2n-2k})}=\coprod\limits_{i=0}^k\Or_{(2^{k-i},1^{2(n-k+i)})}$$
%-----------
\subsection{$\B_{2n}-$orbits in a spherical orbit}\label{2.7}
Since a spherical orbit is a disjoint finite union of $\B_{2n}-$orbits, in particular,  $\Or_{(2^k,1^{2(n-k)})}\cap\nil_{2n}$ is a disjoint
finite union of $\B_{2n}-$orbits. Moreover, by \ref{1.5} for ${\mathcal B}\subset\Or_{(2^k,1^{2(n-k)})}\cap\nil_{2n}$ 
one has $\overline{\mathcal B}\cap\Or_{(2^k,1^{2(n-k)})}$ is an irreducible component of $\Or_{(2^k,1^{2(n-k)})}\cap\nil_{2n}$ 
if and only if $\dim \mathcal B=\frac{1}{2}k(k+2(n-k)+1)$. 
We will call such orbits {\it maximal} in $\Or_{(2^k,1^{2(n-k)})}\cap\nil_{2n}$.

By a result of Timashev \cite[2.13]{Tim} for any $\B-$orbit $\mathcal B$ one has $\overline{\mathcal B}\setminus \mathcal B$ 
is an equidimensional variety of codimension 1 in $\overline{\mathcal B}$.

Applying this to $\Or_{(2^k,1^{2(n-k)})}\cap\nil_{2n}$ we get that for any ${\mathcal B}\in \Or_{(2^k,1^{2(n-k)})}\cap\nil_{2n}$ 
such that $\overline{\mathcal B}\cap \Or_{(2^k,1^{2(n-k)})}\ne \mathcal B$ there exist 
$\B_{2n}-$orbits ${\mathcal B}_1,\ldots,{\mathcal B}_s\subset  \Or_{(2^k,1^{2(n-k)})}\cap\nil_{2n}$ such that
$$\left(\overline{\mathcal B}\setminus {\mathcal B}\right) \cap \Or_{(2^k,1^{2(n-k)})}=\bigcup\limits_{i=1}^s{\overline{\mathcal B}_i}\cap \Or_{(2^k,1^{2(n-k)})},\quad {\rm codim}_{\overline{\mathcal B}}{\mathcal B}_i=1$$
We will call $\mathcal{B}_i$ the {\it component of the boundary (inside $\Or_{(2^k,1^{2(n-k)})}$)} of $\mathcal B.$

\subsection{Notation on link patterns }\label{2.8}
For $L\in {\bf LP}_n$ we always write $(s,t)\in L$ where $s<t$.
For $L=(i_1,j_1),\ldots,(i_k,j_k)\in {\bf LP}_n$  the set of points $Ep(L):=\{i_1,j_1,\ldots,i_k,j_k\}$ is called {\it end points} of $L$ 
and the set of all the other points $Fp(L):=\{i\}_{i=1}^n\setminus Ep(L)$
is called {\it fixed points} of $L$.

We say that arcs $(i_s,j_s),\ (i_t,j_t)$ where $i_s<i_t$ have a {\it crossing} if
$i_s<i_t<j_s<j_t.$ We say that an arc $(i_r,j_r)$ is a {\it bridge} over a fixed point $f\in Fp(L)$ if $i_r<f<j_r.$

For $L\in {\bf SLP}_{2n}$ we change the notation a little bit. 
We write the ends of an arc either from left to right, then we denote it as $\left\langle i,j\right\rangle$ and 
$-n\leq i<j\leq n$ or as  $(\pm i,\pm j)$ where $1\leq i\leq j\leq n$. 
The symmetric arcs we write as $(-i,i).$

To simplify the notation we will denote $X_{(\pm i, j)}:=X_{e_j\mp e_i}$ for a pair of arcs\\ 
$\{(\pm i, j),\ (\mp i,-j)\}\in L$ and $X_{(-i,i)}:=X_{2e_i}.$
For $L\in{\bf SLP}_{2n}$ set $Ep^+(L)$ to be the set of its positive end points and $Fp^+(L)$ to be the set of its positive fixed points. 
Obviously the full sets $Ep(L)$ and $Fp(L)$ are obtained by adding the same points with negative sign.

\begin{rem}
Note that in the cases $A_n$ and $C_n$ there is a bijection between the set of subsets of strongly orthogonal roots in $R^+$ (or resp. $\Phi^+$) 
and the set of involutions in the corresponding Weyl groups.
\end{rem}

\subsection{$\B_{SL_{2n}}$ and $\mathfrak n_{\Sl_{2n}}$ corresponding to $\B_{2n}$ and $\mathfrak n_{2n}$}\label{3.2}
Since in all our proofs we use intensively Theorem \ref{1.3} we construct root vectors $\{Y_{e_j-e_i}\}_{1\leq i<j\leq 2n}\in \Sl_{2n}$ 
such that $X_\alpha$ for $\alpha\in \Phi^+$ is the sum of root vectors (up to signs) for corresponding (strongly orthogonal) 
roots in $\mathfrak n_{\Sl_{2n}}$ and $\B_{2n}$ is a subgroup of $\B_{SL_{2n}}$ so that 
for $L\in {\bf SLP}_{2n}$ one has $X_L=Y_L$ (up to signs of root vectors) and $\mathcal B_L\subset \mathfrak B(Y_L)$. 
To do this we put:
$$\begin{array}{rl} Y_{(i,j)}&=\left\{\begin{array}{ll} E_{i,j}&{\rm if}\ 1\leq i<j\leq n;\\
                    E_{i,3n+1-j}&{\rm if}\ i<n,\ n+1\leq j\leq 2n;\\
                    E_{3n+1-i,3n+1-j}&{\rm if}\ n<i<j\leq 2n;\\
                    \end{array}\right.\\
                    \end{array}
                    $$
so that for $1\leq i\leq j\leq n$ one has
$$X_{(\pm i,j)}=\left\{\begin{array}{ll} X_{e_j-e_i}=Y_{(n+1-j,n+1-i)}-Y_{(n+i,n+j)}  &{\rm if}\ (\pm i,j)=(i,j); \\
                                     X_{2e_i}=Y_{(n+1-i,n+i)}&{\rm if}\ (\pm i,j)=(-i,i);\\
                                     X_{e_j+e_i}=Y_{(n+1-i,n-j)}+Y_{(n+1+j,n+i)}&{\rm if}\ (\pm i,j)=(-i,j);\\
\end{array}\right.$$

Note that to read $Y_L$ from $L\in {\bf SLP}_{2n}$ we must move from $\{-n,\ldots,-1,1,\ldots,n\}$ to $\{1,\ldots,2n\}$ 
and then $X_L=Y_L$ (up to signs of root vectors) as matrices.

\subsection{Projection $\pi$ from $C_n$ to $C_{n-1}$}\label{2.9a} In our proofs we use intensively $\pi: {\bf B}_{2n}\rightarrow{\bf B}_{2(n-1)}$ and
$\pi:\nil_{2n}\rightarrow\nil_{2(n-1)}.$ Let us define it. One can regard $X_{(\pm i,j)}$ for $j<n$ as a root vector of $\nil_{2(n-1)}.$ Respectively,
changing $I_{2n}$ to $I_{2(n-1)}$ one can regard $U_{e_j\pm e_i}$, for $1\leq i<j<n$, and $U_{2e_j}$, for $j\neq n$, as elements of ${\bf U}_{2(n-1)}.$
Note also that $T_i(a)$ for $i\ne n$ can be regarded as
an element of ${\bf T}_{2(n-1)}$ simply by changing $n$ to $n-1$ in the definition.
We put
$$\pi(X_{(\pm i,j)})=\left\{\begin{array}{ll} X_{(\pm i,j)} &{\rm if}\ j\ne n;\\
                              0&{\rm otherwise};\\
                              \end{array}\right.$$
This defines the projection $\pi:\nil_{2n}\rightarrow \nil_{2(n-1)}$.

Respectively put
$$\begin{array}{ll}\pi(U_{\alpha}(a))&=\left\{\begin{array}{ll} U_{\alpha}(a) &{\rm if}\ \alpha= e_j\pm e_i \ :\ 1\leq i< j<n\ {\rm or}\ \alpha=2e_i\ :\ i<n;\\
																			I_{2(n-1)}&{\rm otherwise};\\
																			\end{array}\right.\\
 \pi(T_i(a))&=\left\{\begin{array}{ll} T_i(a)&{\rm if}\ i<n;\\
                                       I_{2(n-1)}&{\rm otherwise};\\
																			\end{array}\right.\\
 \end{array}$$
Taking into account that 
$$\pi(U_\alpha(a) U_\beta(b))=\pi(U_\alpha(a))\pi(U_\beta(b)),\ \pi(T_i(a) T_j(b))=\pi(T_i(a))\pi(T_i(b))$$ 
we get that $\pi$ is well defined on both ${\bf T}_{2n}$ and ${\bf U}_{2n}.$ 
Further since $\pi (TU)=\pi(T)\pi(U)$ for $T\in {\bf T}_{2n}$, $U\in {\bf U}_{2n}$
it is well defined on ${\bf B}_{2n}$.

Exactly in the same way $\pi(A.X)=\pi(A).\pi(X)$ for $A\in {\bf B}_{2n},\ X\in\nil_{2n}$ so that the projection of ${\bf B}_{2n}-$orbit of $X\in \nil_{2n}$ is
${\bf B}_{2(n-1)}-$orbit of $\pi(X)$. In particular $\pi({\mathcal B}_L)$ is well defined. 
We define correspondingly $\pi:{\bf SLP}_{2n}\rightarrow {\bf SLP}_{2(n-1)}$ by
$$\pi(\left\langle  i,j\right\rangle)=\left\{\begin{array}{ll}\left\langle  i,j\right\rangle&{\rm if}\ -n<i< j<n;\\
                                  \emptyset&{\rm otherwise};\\
                                  \end{array}\right.$$
We get $\pi({\mathcal B}_L)=\widehat {\mathcal B}_{\pi(L)}$ where $\widehat{\mathcal B}$ is an orbit in $\nil_{2(n-1)}.$

Note also that in the way described above we can consider $A\in {\bf B}_{2(n-1)}$ as an element of ${\bf B}_{2n}$,
$X\in\nil_{2(n-1)}$ as an element of $\nil_{2n}$ and $L\in{\bf SLP}_{2(n-1)}$ as an element of ${\bf SLP}_{2n}.$

%------- Nilpotent $\B-$orbits of square 0 classification --------
\section{Nilpotent $\B-$orbits of square 0 classification}
All the $\B_{2n}-$orbits of square 0 in $\nil_{2n}$ are parameterized by
elements of ${\bf SLP}_{2n}$. In this section we provide this bijection and
compute the dimension of $\B_{2n}-$orbit of square 0.
\subsection{$\B_{2n}$-orbits of square $0$ in $\nil_{2n}$ }\label{4.1}
One has
\begin{thm}
$\mathcal X_{2n}^{(2)}=\bigsqcup\limits_{L\in {\bf SLP}_{2n}}{\mathcal B}_L$.
\end{thm}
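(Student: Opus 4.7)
The plan is to establish three things: (a) each $\mathcal{B}_L$ lies in $\mathcal{X}_{2n}^{(2)}$; (b) the orbits $\mathcal{B}_L$ for distinct $L \in \mathbf{SLP}_{2n}$ are disjoint; (c) every $x \in \mathcal{X}_{2n}^{(2)}$ is in some $\mathcal{B}_L$.

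For (a), it suffices to show $X_L^2 = 0$ since $\mathcal{X}_{2n}^{(2)}$ is $\mathbf{B}_{2n}$-stable. Using the explicit matrix realization in \ref{2.3}, one checks directly that for any two distinct positive roots $\alpha,\beta$ in a strongly orthogonal subset $S \subset \Phi^+$ the matrix product $X_\alpha X_\beta$ vanishes (the nonzero rows of $X_\alpha$ meet only zero columns of $X_\beta$, thanks to the index condition that makes strong orthogonality equivalent to disjointness of the associated index sets), and that each $X_\alpha^2 = 0$ since $\alpha \in \Phi^+$. Thus $X_L^2 = \bigl(\sum_{\alpha \in S}X_\alpha\bigr)^2 = 0$.

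For (b), I would use the embedding constructed in \ref{3.2}: $\mathbf{B}_{2n} \subset \mathbf{B}_{SL_{2n}}$ and $X_L = Y_L$ up to signs of root vectors, under the map sending a symmetric link pattern $L \in \mathbf{SLP}_{2n}$ to its image $\widetilde L \in \mathbf{LP}_{2n}$ (after relabelling $\{-n,\ldots,-1,1,\ldots,n\}\to\{1,\ldots,2n\}$). Distinct $L, L' \in \mathbf{SLP}_{2n}$ give distinct $\widetilde L, \widetilde{L'}\in \mathbf{LP}_{2n}$, hence distinct $\mathbf{B}_{SL_{2n}}$-orbits $\mathfrak{B}(Y_L), \mathfrak{B}(Y_{L'})$ by Theorem \ref{1.3}(i); consequently $\mathcal{B}_L \cap \mathcal{B}_{L'} = \emptyset$.

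For (c), I would argue by induction on $n$, using the projection $\pi$ defined in \ref{2.9a}. The base case $n=1$ is immediate: $\nil_2 = \mathbb{C}X_{2e_1}$, and the torus element $T_1(a)$ rescales $X_{2e_1}$ by $a^2$, so any $x = c X_{2e_1}$ lies in either $\mathcal{B}_{\emptyset}$ or $\mathcal{B}_{(-1,1)}$. For the inductive step, given $x \in \mathcal{X}_{2n}^{(2)}$, note $\pi(x)^2 = \pi(x^2) = 0$ since $\pi$ is a Lie-algebra projection from $\nil_{2n}$ onto $\nil_{2(n-1)}$ along the subspace spanned by root vectors involving $\pm e_n$. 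By induction there is $L' \in \mathbf{SLP}_{2(n-1)}$ and $g'\in \mathbf{B}_{2(n-1)}$ with $g'.\pi(x) = X_{L'}$; lifting $g'$ to $\mathbf{B}_{2n}$ (as in \ref{2.9a}) and replacing $x$ by its conjugate, we may assume $\pi(x) = X_{L'}$. Then
\[
x = X_{L'} + c\, X_{2e_n} + \sum_{i<n} a_i\, X_{e_n - e_i} + \sum_{i<n} b_i\, X_{e_n + e_i}.
\]
The condition $x^2 = 0$ translates, via a direct matrix computation, into a system of polynomial relations among the $a_i, b_i, c$ and the arcs of $L'$. I would then exploit the remaining $\mathbf{B}_{2n}$-action, namely the unipotent subgroups $U_{e_n \pm e_i}(t)$ (for $i<n$), $U_{2e_n}(t)$ and the torus factor $T_n(a)$ (all of which act as the identity on the part $\pi(x) = X_{L'}$ up to effects that can be reabsorbed), to successively kill the coefficients $a_i, b_i, c$ unless they correspond to an arc joining $\pm n$ to an endpoint of $L'$ or to the symmetric arc $(-n,n)$; the square-zero relations together with the action tables of \ref{2.3} force the surviving non-zero coefficients to form precisely a new strongly orthogonal pair (or symmetric arc) extending $L'$ to a symmetric link pattern $L \in \mathbf{SLP}_{2n}$ with $\pi(L)=L'$, and rescale each surviving coefficient to $1$ via $T_n(a)$.

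The main obstacle is the inductive step (c): the bookkeeping for how the square-zero constraint couples the new coefficients $a_i, b_i, c$ to the arcs of $L'$, and the careful case analysis showing that the available $\mathbf{B}_{2n}$-operations suffice to reach the canonical form $X_L$ without disturbing $X_{L'}$, is the combinatorially delicate part; in particular one must verify that every admissible ``extension'' of $L'$ by arcs touching $\pm n$ produces an element of $\mathbf{SLP}_{2n}$ (rather than some asymmetric or forbidden configuration), which is where the symplectic symmetry really enters.
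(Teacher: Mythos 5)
Your plan coincides with the paper's proof: disjointness and containment in $\mathcal X_{2n}^{(2)}$ via the embedding of \ref{3.2} and Theorem \ref{1.3}, and surjectivity by induction on $n$ using the projection $\pi$ of \ref{2.9a}, lifting the normal form of $\pi(x)$ and then using the square-zero relations together with $U_{e_n\pm e_i}$, $U_{2e_n}$ and $T_n$ to normalize the residual coefficients $a_i,b_i,c$. The ``combinatorially delicate part'' you defer is exactly the four-case analysis the paper carries out (no new arc at $n$; a central arc $(-n,n)$; an arc $(-i,n)$; an arc $(i,n)$), so your outline is essentially the paper's argument.
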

\begin{proof}
For $L\ne L'\in {\bf SLP}_{2n}$ one has $\mathcal B_L\subset \mathfrak B(Y_L)$ and $\mathcal B_{L'}\subset \mathfrak B(Y_{L'})$ 
as we have explained in \ref{3.2}, so that Theorem \ref{1.3} provides $\mathcal B_L\cap\mathcal B_{L'}=\emptyset$ 
and $\bigsqcup\limits_{L\in {\bf SLP}_{2n}}{\mathcal B}_L\subset \mathcal X_{2n}^{(2)}$.

It remains to show that for $X\in \mathcal X_{2n}^{(2)}$ there exists $L\in {\bf SLP}_{2n}$ such that $X\in \mathcal B_L.$
The proof is by induction on $n$. It holds trivially for $n=1$.
Assume it holds for $n-1$ and show for $n.$

Obviously, $\pi:\mathcal X_{2n}^{(2)}\rightarrow\mathcal X_{2(n-1)}^{(2)}$. Given $X \in \mathcal X_{2n}^{(2)}$,
by the induction hypothesis there exists $\widehat L\in {\bf SLP}_{2(n-1)}$
such that $\pi(X) \in \widehat{\mathcal B}_{\widehat L}$. In other words, $\exists \widehat{M} \in \B_{2(n-1)}$ 
such that $\widehat{M}.\pi(X) = \widehat X_{\widehat L}$.

Let $\widehat L=(\pm i_1,\pm j_1)\ldots(\pm i_p,\pm j_p)(\pm l_1,\mp m_1)\ldots(\pm l_q,\mp m_q)(-k_1,k_1)\ldots(-k_r,k_r)$ so 
that $Ep^+(\widehat L)=\{i_s,j_s\}_{s=1}^p\sqcup\{l_s,m_s\}_{s=1}^q\sqcup\{k_s\}_{s=1}^r$.

Let $M$ be $\widehat M$ considered as an element of $\B_{2n}$ and $X_{\widehat L}$ be $\widehat X_{\widehat L}$ considered as an element of $\nil_{2n}.$ 
One has
$$M.X = X_{\widehat L} + \sum_{i=1}^{n-1}f_i X_{(i,n)} + \sum_{i=1}^{n-1}g_i X_{(-i,n)} + h X_{(-n,n)}.$$

Taking into account that $(M.X)^2=0$ and the structure of $X_{\widehat L}$ we get $f_{j_s}=0$ for $1\leq s\leq p$, 
$f_{m_s}=f_{l_s}=0$ for $1\leq s\leq q$
and $f_{k_s}=0$ for $1\leq s\leq r$. We also get $g_{j_s}=0$ for $1\leq s\leq p$.

$$\begin{array}{l}M':=\prod\limits_{s=1}^p U_{e_n-e_{j_s}}(-f_{i_s})U_{e_n+e_{j_s}}(g_{i_s})\prod\limits_{s=1}^qU_{e_n-e_{m_s}}(-g_{l_s})U_{e_n-e_{l_s}}(-g_{m_s})\prod\limits_{s=1}^rU_{e_n-e_{k_s}}(-g_{k_s})\\

= I_{2n} + \sum\limits_{s=1}^p \left(-f_{i_s} X_{(j_s,n)}
							+ g_{i_s} X_{(-j_s,n)} \right)
							- \sum\limits_{s=1}^q \left( g_{l_s} X_{(m_s,n)}
							+ g_{m_s} X_{(l_s,n)} \right)
							- \sum\limits_{s=1}^r g_{k_s} X_{(k_s,n)}\\
\end{array}$$
one has
$$X':= M'.(M.X) =X_{\widehat L}+\sum\limits_{i\in Fp^+(\widehat L)}f_i X_{(i,n)} + \sum\limits_{i\in Fp^+(\widehat L)}g_i X_{(-i,n)} + h' X_{(-n,n)}$$

To complete the proof we have to consider 4 cases:
\begin{itemize}
\item[(i)] If $f_i=g_i=h'=0$ for all $1\leq i\leq n$ then obviously, $X'=X_{\widehat L}$ so that $X\in \mathcal B_{\widehat L}$.
\item[(ii)] If $f_i=g_i=0$ and $h'\ne 0$ then  $T_n({h'}^{-0.5}).X'=X_{\widehat L}+X_{(-n,n)}=X_L$ where $L=(n,-n)\widehat L$ so that
$X\in \mathcal B_L.$
\item[(iii)] If $f_i=0$ for all $1\leq i\leq n$ and there exists $g_s\ne 0$ then let\\
 $i:=\min\{ s\ :\ g_s\ne 0\}$. 
One has 
$$X^{\prime\prime}:=T_i(g_i^{-1}). X'=X_{\widehat L}+X_{(-i,n)}+\sum\limits_{j=i+1}^n g_j X_{(-j,n)}+h'X_{(-n,n)}.$$
Take
$$M^{\prime\prime}:=U_{e_n-e_i}(-0.5 h')\,\prod\limits_{j=i+1}^{n-1}U_{e_j-e_i}(-g_j)= I_{2n}-\sum\limits_{j=i+1}^{n-1} g_jX_{(i,j)}-0.5 h'X_{(i,n)}.$$
Recall that $g_j\ne 0$ implies $j\in Fp^+(L)$. Thus,
$M^{\prime\prime}.X^{\prime\prime}=X_{\widehat L}+X_{(-i,n)}=X_L$ where $L=(-i,n)(i,-n)\widehat L$ so that
$X\in \mathcal B_L.$
\item[(iv)] Finally, assume $f_s\ne 0$ for some $s$ and let $j:=\max\{s\ :\ f_s\ne 0\}$. One has
$$X^{\prime\prime}:=T_j(f_j^{-1}).X'=
X_{\widehat L}+X_{(j,n)}+\sum\limits_{i=1}^{j-1} f_i X_{(i,n)}+\sum\limits_{i=1,\ i\ne j}^n g_iX_{(-i,n)}+g_jf_j^{-1}X_{(-j,n)}+
h'X_{(-n,n)}.$$

Further we proceed in a similar way, however the computations are a little bit more complex. We make them in two steps. First, take
$$M^{\prime\prime}:=U_{e_n+e_j}(0.5 h')\cdot\prod\limits_{i=1}^{j-1}U_{e_j-e_i}(f_i)=I_{2n}+\sum\limits_{i=1}^{j-1} f_iX_{(i,j)}+0.5 h'X_{(-j,n)}.$$
One has
$$X^{\prime\prime\prime}:=M^{\prime\prime}.X^{\prime\prime}=X_{\widehat L}+X_{(j,n)}+\sum\limits_{i=1, i\ne j}^n g_iX_{(-i,n)}$$
since the coefficient of $X_{(-j,n)}$ in $X^{\prime\prime\prime}$ (which is equal to $r=\sum\limits_{i=1}^{j-1} f_ig_i +g_jf_j^{-1}$) 
must be zero since $(X^{\prime\prime\prime})^2=rX_{(-n,n)}=0$.\\
At the last step take
$$M^{\prime\prime\prime}:=\prod\limits_{i=1,i\ne j}^{n-1} U_{e_i+e_j}(g_i)=I_{2n}+\sum\limits_{i=1}^{j-1}g_iX_{(-i,j)}+\sum\limits_{i=j+1}^{n-1}g_iX_{(-j,i)}$$
and  get $M^{\prime\prime\prime}.X^{\prime\prime\prime}=X_{\widehat L}+X_{(j,n)}=X_L$ where $L=(j,n)(-j,-n)\widehat L$ so that $X\in \mathcal B_L.$
\end{itemize}
\end{proof}

%Given $L\in {\bf SLP}_{2n}$ we put $l(L)$ to be the number of arcs in it and exactly as in ${\bf LP}_n$, we put
%${\bf SLP}_{2n}(k)$ to be the subset of symmetric link patterns on $2n$ points with $k$ arcs.

By the construction of $X_L$ we get $Rank X_L=l(L)$. In turn the rank of a nilpotent matrix in ${\mathcal X}_{2n}^{(2)}$ 
is equal to the number of $2-$blocks in its Jordan form, so that ${\mathcal B}_L\subset {\mathcal O}_{(2^k,1^{2(n-k)})}$ 
iff $l(L)=k$ and exactly as in case $A_n$ one gets
$${\mathcal O}_{(2^k,1^{2(n-k)})}\cap\nil_{2n}=\bigsqcup\limits_{L\in {\bf SLP}_{2n}(k)}{\mathcal B}_L.$$
\rem Let $\mathfrak g={\rm Lie}(G)$ be a simple Lie algebra and let $\mathfrak a\subset \mathfrak b$ 
be an abelian nilradical in $\mathfrak b={\rm Lie}(\B)$ of Borel subgroup $\B\subset G$. 
Let $\Phi_{\mathfrak a}\subset \Phi^+$ be such that $\mathfrak a=\bigoplus\limits_{\alpha\in \Phi_{\mathfrak a}}\Co X_\alpha$ 
where $X_\alpha$ is a root vector. 
Panyushev \cite{Pa2} shows that there is a bijection between $\B-$orbits in $\mathfrak a$ and strongly orthogonal subsets of $\Phi_{\mathfrak a}$,
namely each $\B-$orbit has a unique representative which is the sum of root vectors for some subset of strongly orthogonal roots of $\Phi_{\mathfrak a}$.\\
In case of $C_n$ one has $\mathfrak a=\bigoplus\limits_{1\leq i\leq j\leq n}\Co X_{(-i,j)}$ and in particular
$\mathfrak a\subset \mathcal X_{2n}^{(2)}$ so that his result in this case can be obtained as a corollary of the proposition above.

%--------- dimensions ----------
\subsection{$\B-$orbit dimension}\label{4.2}
For the expression of $\dim {\mathcal B}_L$ we need the following notation connected to \ref{2.8}:
\begin{itemize}
\item[(i)]For a fixed point $f$ of $L$ let $b_L(f)$ be the number of bridges over $f$.
For $L\in{\bf SLP}_{2n}$ put $b(L):=\sum\limits_{f\in Fp^+(L)}b_L(f)$.

\item[(ii)] For $\left\langle i,j\right\rangle,\left\langle k,l\right\rangle\in L\in {\bf SLP}_{2n}$ where 
$i<k$ we say that $\left\langle i,j\right\rangle$ {\it crosses} $\left\langle k,l\right\rangle$ (on the right) {\it non-negatively} if $i<k<j<l$ and  $k\geq -j.$
  For $\left\langle i,j\right\rangle\in L$ put $c_L(\left\langle i,j\right\rangle):=|\{\left\langle k,l\right\rangle\in L\ :\ i<k<j<l\ {\rm and}\ k>-j\}|$ to be the number of such crossings.
Note that
\begin{itemize}
\item{} For $(-i,-j)(i,j)\in L$ one has $c_L((-i,-j))=0$ and\\
$c_L((i,j))=|\{(k,l)\in L\ :\ i<k<j<l\}|$;
\item{} For $(-i,i)\in L$ one has $c_L((-i,i))=|\{\left\langle k,l\right\rangle\in L\ :\ -i<k<i<l\}|$;
\item{} For $(i,-j)(-i,j)$ one has 
$c_L((i,-j))=|\{\left\langle k,l\right\rangle\in L\ :\-i<k<i<l\}|$ and
$c_L((-i,j))=|\{\left\langle k,l\right\rangle -i<k<j<l\}|$;
\end{itemize}
Put $c(L):=\sum\limits_{\left\langle i,j\right\rangle\in L} c_L(\left\langle i,j\right\rangle)$.
Note that if we draw arcs $\left\langle i,j\right\rangle$ symmetrically with the center over $0.5(i+j)$ 
then $c(L)$ is the number of crossings over non-negative part of $L.$
\end{itemize}

In the example from \ref{1.4} one has $b(L)=b_L(5)=1$ and $c(L)=c_L(\left\langle -6,3\right\rangle)+c_L((-4,4))=2$.

By \cite{Co-Mc,Sp}
$$d(k,n):=\dim \left(\Or_{(2^k,1^{2(n-k)})}\cap \nil_{2n}\right)=
{\frac 1 2}\dim \Or_{(2^k,1^{2n-2k})}=
nk-{\frac 1 2}k(k-1).$$
In these terms we get
\begin{thm} For $L\in{\bf SLP}_{2n}(k)$ one has $\dim {\mathcal B}_L=d(k,n)-c(L)-b(L).$
\end{thm}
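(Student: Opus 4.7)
I would prove the formula by induction on $n$, using the projection $\pi$ of \ref{2.9a} and mirroring the four-case analysis of the proof of the Theorem in \ref{4.1}. The base case $n=1$ (only $L=\emptyset$ or $L=(-1,1)$) is a direct check. For the inductive step set $\hat L:=\pi(L)\in{\bf SLP}_{2(n-1)}$ and $\hat k:=l(\hat L)$. Since $\pi\colon\mathcal B_L\twoheadrightarrow\widehat{\mathcal B}_{\hat L}$ is surjective,
$$\dim\mathcal B_L = \dim\widehat{\mathcal B}_{\hat L}+\Delta(L),\qquad \Delta(L):=\dim\bigl(\pi^{-1}(X_{\hat L})\cap\mathcal B_L\bigr),$$
and the inductive hypothesis gives $\dim\widehat{\mathcal B}_{\hat L}=d(\hat k,n-1)-c(\hat L)-b(\hat L)$. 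It therefore suffices to prove, in each of the four cases
\begin{itemize}
\item[(i)] $L=\hat L$ with $n\in Fp^+(L)$ ($\hat k=k$);
\item[(ii)] $L=\hat L\cup\{(-n,n)\}$ ($\hat k=k-1$);
\item[(iii)] $L=\hat L\cup\{(-i,n),(i,-n)\}$ for some $i\in Fp^+(\hat L)$ ($\hat k=k-2$);
\item[(iv)] $L=\hat L\cup\{(j,n),(-j,-n)\}$ for some $j\in Fp^+(\hat L)$ ($\hat k=k-2$),
\end{itemize}
the identity $\Delta(L)=[d(k,n)-d(\hat k,n-1)]-[c(L)-c(\hat L)]-[b(L)-b(\hat L)]$.

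The right-hand side is purely combinatorial. From $d(k,n)=nk-\tfrac12k(k-1)$ one computes $d(k,n)-d(\hat k,n-1)$ to be $k$, $n$, $2n-k+1$, and $2n-k+1$ in cases (i)--(iv) respectively. The differences $c(L)-c(\hat L)$ and $b(L)-b(\hat L)$ come only from the new arcs and are read off from the definitions in \ref{4.2}: in (i) both vanish; in (ii) the new arc $(-n,n)$ contributes no non-negative crossings but bridges every $f\in Fp^+(\hat L)$; in (iii) and (iv) one counts the non-negative crossings between the two new (symmetric) arcs and the arcs of $\hat L$, as well as the bridges of the new arcs over positive fixed points of $\hat L$.

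To compute the left-hand side $\Delta(L)$ I would use the coordinatization of the fibre from the proof of the Theorem in \ref{4.1}: every $X\in\pi^{-1}(X_{\hat L})\cap\mathcal X_{2n}^{(2)}$ has the form $X=X_{\hat L}+\sum_{i=1}^{n-1}f_iX_{(i,n)}+\sum_{i=1}^{n-1}g_iX_{(-i,n)}+hX_{(-n,n)}$. The constraint $X^2=0$ together with the normalization by $M'\in{\bf B}_{2n}$ absorbs all coordinates indexed by $Ep^+(\hat L)$, leaving a slice parametrized by $(f_i,g_i)_{i\in Fp^+(\hat L)}$ and $h'$; in cases (ii)--(iv) one further torus generator ($T_n$, $T_i$, or $T_j$ respectively) rescales a single residual coefficient to $1$. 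Counting the absorbed group parameters yields $\Delta(L)$ in each case. The main obstacle is case (iv), where the quadratic relation $\sum_{i=1}^{j-1}f_ig_i+g_jf_j^{-1}=0$ arising from $(X''')^2=0$ couples coordinates non-linearly; here one must check that after the subsequent normalizations $M''$ and $M'''$ of the proof of the Theorem in \ref{4.1}, the number of remaining free parameters agrees with the combinatorial prediction. The other cases follow by strictly simpler versions of the same count.
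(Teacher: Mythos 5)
Your reduction is sound and the combinatorial bookkeeping is right: the identity $\dim\mathcal B_L=\dim\widehat{\mathcal B}_{\hat L}+\Delta(L)$ does hold (all fibres of $\pi|_{\mathcal B_L}$ are isomorphic by equivariance, since $\pi(M.X)=\pi(M).\pi(X)$), your four cases are exactly the four options for $X_L-X_{\hat L}$ in the paper's proof, and the values $k,\ n,\ 2n-k+1,\ 2n-k+1$ for $d(k,n)-d(\hat k,n-1)$ are correct. But note that this is \emph{not} the paper's route: the paper never looks at the fibre of $\pi$; it writes $\dim\mathcal B_L=\dim\mathcal T_L+\dim\mathcal U_L$ and obtains $\dim\mathcal U_L$ from the rank of the linear system $[X_L,N]=0$, so its inductive step is a count of how many new independent equations $Eq_{(t,\pm n)}$ that system acquires. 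Your route instead requires computing $\Delta(L)=\dim\bigl(\pi^{-1}(X_{\hat L})\cap\mathcal B_L\bigr)$, and this is where the proposal stops being a proof.

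The gap is that the one sentence you offer for computing $\Delta(L)$ --- ``counting the absorbed group parameters'' --- is not a valid procedure. The fibre $\pi^{-1}(X_{\hat L})\cap\mathcal B_L$ is a single orbit of $P:=\{M\in\B_{2n}:\pi(M).X_{\hat L}=X_{\hat L}\}$, so its dimension is $\dim P-\dim Z(X_L)$; computed that way, your approach collapses back onto exactly the centralizer calculation you are trying to bypass. The normalizing elements $M',M'',M'''$ of \ref{4.1} depend on the point being normalized, their parameters are not free coordinates on the fibre, and counting them over- or under-counts (already in case (i) the fibre has dimension $k=\hat k$, equal to the number of \emph{slice coordinates} indexed by $Ep^+(\hat L)$ that survive the linear constraints, with $h$ then determined by them --- not to the number of group parameters used). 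What would actually make your route work is to show that $\pi^{-1}(X_{\hat L})\cap\mathcal X^{(2)}_{2n}$ is a coordinate subspace $W$ of the affine space $\bigl\{X_{\hat L}+\sum f_iX_{(i,n)}+\sum g_iX_{(-i,n)}+hX_{(-n,n)}\bigr\}$ (the quadratic part of $X^2$ vanishes identically there, so $X^2=0$ only forces the vanishing of certain $Ep^+(\hat L)$-indexed coordinates), then to identify the piece of $W$ lying in $\mathcal B_L$ as the locally closed stratum singled out by the case analysis of \ref{4.1}, verify that $M'$ does not disturb the $Fp^+(\hat L)$-indexed coordinates, and check that each stratum is a graph over a coordinate subspace --- in particular that in case (iv) the quadratic relation $\sum f_ig_i+g_jf_j^{-1}=0$ cuts the dimension by exactly one. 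None of this accounting is carried out, and it, rather than the combinatorics of $b(L)$ and $c(L)$, is the actual content of the theorem.
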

\begin{proof}
Let $L=(\pm i_1,\pm j_1)\ldots(\pm i_p,\pm j_p)(\pm l_1,\mp m_1)\ldots(\pm l_q,\mp m_q)(-d_1,d_1),\ldots(-d_r,d_r)$
so that
$Ep^+(L)=\{i_s,j_s\}_{s=1}^p\sqcup\{l_s,m_s\}_{s=1}^q\sqcup\{d_s\}_{s=1}^r$ and $2p+2q+r=k.$
One has
$$X_L=\sum\limits_{s=1}^pX_{(i_s,j_s)}+\sum\limits_{s=1}^qX_{(-l_s,m_s)}+\sum\limits_{s=1}^rX_{(-d_s,d_s)}$$

Recall that $\B_{2n}={\bf T}_{2n}\ltimes {\bf U}_{2n}$.
Let ${\mathcal T}_L$ denote ${\bf T}_{2n}-$orbit of $X_L$ and ${\mathcal U}_L$ denote ${\mathbf U}_{2n}-$ orbit of $X_L.$
Since these are subsets for the corresponding orbits in $\Sl_{2n}$ and since their intersection in $\Sl_{2n}$ is $X_L$ (cf. \cite{Me0}) we get
${\mathcal T}_L\cap {\mathcal U}_L=\{X_L\}$ so that $\dim \mathcal B_L=\dim\mathcal T_L+\dim\mathcal U_L.$

One has immediately (by strong orthogonality of roots) that $\dim\mathcal T_L=p+q+r.$

As for $\mathcal U_L$ one has $\dim\mathcal U_L=\dim {\bf U}_{2n}-\dim Z(X_L)$
where $Z(X_L)=\{U\in {\bf U}_{2n}\ :\ UX_LU^{-1}=X_L\}=\{V\in \nil_{2n}\ :\ VX_L=X_LV\}$ (the last equality is obtained by taking $V=U-I_{2n}$).

Note further that taking
$$N=\sum\limits_{1\leq i<j\leq n}N_{(i,j)}+ \sum\limits_{1\leq i<j\leq n}N_{(i,-j)}+\sum\limits_{1\leq i\leq n}N_{(i,-i)}$$
where $N_{(i,j)}$ is a variable in $\Co X_{(i,j)}$ we get $\dim\mathcal U_L$ is equal to the rank of linear system $X_LN-NX_L=0.$
In what follows we compute exactly this rank.

The proof is by induction on $n$.
The proposition holds trivially for $n=1$. 
Assume that it holds for $n-1$ and show for $n$.

We need the following notation: let $\widehat \cdot$ denote the projection $\pi(\cdot)$.
Let $(N)_{n}=\sum\limits_{i=1}^{n-1}(N_{(i,n)}+N_{(i,-n)})+N_{(n,-n)}$. 
Let $N'$ be $\widehat N$ regarded as an element of $\nil_{2n}$ (in other words $N=N'+(N)_n$). 
Let $(X_L)_n=X_L-X_{\widehat L}.$ Note that $(X_L)_n$ is at most a one root vector.
Namely, there are exactly 4 options:
\begin{itemize}
\item[(i)] $X_L=X_{\widehat L}$ that is $\widehat L=L$ and $(X_L)_n=0$.
\item[(ii)] $X_L=X_{\widehat L}+X_{(-n,n)}$ that is  $L=(-n,n)\widehat L$ and $l(\widehat L)=k-1$.
\item[(iii)] $X_L=X_{\widehat L}+X_{(-i,n)}$, that is $(l_q,-m_q)=(i,-n)$ i.e. $L=(i,-n)(-i,n)\widehat L$ and $l(\widehat L)=k-2$.
\item[(iv)] $X_L=X_{\widehat L}+X_{(i,n)}$, that is $(i_p,j_p)=(i,n)$ i.e. $L=(i,n)(-i,-n)\widehat L$ and $l(\widehat L)=k-2$.
\end{itemize}
All four cases are proved in the same way. We consider below in detail only case (iv) which is more complex and leave the check of other cases to the reader.

First of all, note that $\dim \mathcal T_L=p+q+r=\dim \widehat{\mathcal T}_{\widehat L}+1$.
Consider $\widehat X_{\widehat L}\in \X_{2(n-1)}^{(2)}$.
By induction hypothesis $\dim \widehat{\mathcal B}_{\widehat L}$ is defined by the expression.
To show the truth of the statement we have to compute the difference $\dim {\mathcal U}_L-\dim \widehat{\mathcal U}_{\widehat L}$.
Note that $[X_L,N]=0$ as a system can be regarded as
$\{[\widehat X_{\widehat L},\widehat N]=0\}\cup\{[X_{\widehat L},(N)_n]+[(X_L)_n,N']+[(X_L)_n,(N)_n]=0\}$.
Let $Eq_{(i,j)}$ denote equation at place $(i,j)$ of the system. Then
$$\{[X_{\widehat L},(N)_n]+[(X_L)_n,N']+[(X_L)_n,(N)_{(n)}]=0\}=\{Eq_{(i,n)}\}_{i=1}^{n-1}\sqcup \{Eq_{(i,-n)}\}_{i=1}^{n}\eqno{(*)}.$$

To show the expression let us compute $b(L)$ and $c(L)$ versus $b(\widehat L)$ and $c(\widehat L).$ To do this
we need the following notation.
For a link pattern $L$ and $-n\leq s<t\leq n$ let $f_L([s,t])$ be the number of fixed points at the interval $[s,t]$ (including the end points $s$ and $t$).
Note that $i$ is not a fixed point anymore so that we have to subtract $b_{\widehat L}(i)$
when compare $b(L)$ and $b(\widehat L)$.
All the bridges of $\widehat L$ over other positive fixed points points are also bridges of $L$.
Further note that there is a new bridge  $(i,n)$ over any fixed point between $i$ and $n$.
Summarizing, $b(L)=b(\widehat L)-b_{\widehat L}(i)+f_L([i,n])$.

To compute the number of crosses note that all the crosses of $\widehat L$ are crosses of $L$. Note also that
$(i,n)$ crosses $\left\langle l,m\right\rangle$ on the right (always non-negatively) iff $\left\langle l,m\right\rangle$ is a bridge over $i$
in $\widehat L.$ Nothing crosses $(i,n)$ on the right and $c_L((-i,-n))=0$.
Thus, $c(L)=c(\widehat L)+b_{\widehat L}(i)$.

Thus, to show that the expression for the dimension is satisfied we have to show that $(*)$ provides
$$\begin{array}{l}d(n,k)-b(L)-c(L)-\dim \mathcal T_L-(d(n-1,k-2)-b(\widehat L)-c(\widehat L)-\dim\widehat{\mathcal T}_{\widehat L})=\\
										nk-0.5k(k-1)-(b(\widehat L)-b_{\widehat L}(i) + f_{L}([i,n]))-(c(\widehat L)+b_{\widehat L}(i))-k-\\
										\quad ((n-1)(k-2)-0.5(k-2)(k-3)-b(\widehat L)-c(\widehat L)-(k-1))=\\
										2n-k-f_L([i,n])=n+f_L([1,i])\\
										\end{array}$$
 new equations.

In this case $(*)$ has a form
$$\begin{array}{l}Eq_{(t, n)}=\left\{\begin{array}{lll}\ N_{(j_s,n)}-N_{(i_s,i)}& {\rm if}\ t=i_s,\ i>i_s;&\\
\ N_{(j_s,n)}& {\rm if}\ t=i_s,\ i<i_s;&\\
                                               -N_{(t,i)} &{\rm if}\ t<i\ {\rm and}\ t\not\in\{i_s\}_{s=1}^{p-1};& (**)\\
                                               0&{\rm otherwise;}&\\
                                               \end{array}\right. \\
Eq_{(t,-n)}=\left\{\begin{array}{lll}\quad N_{(i_s,-n)}+N_{(i,-j_s)}&{\rm if}\ t=j_s;&\\
																					-N_{(m_s,-n)}+N_{(i,-l_s)}&{\rm if}\ t=l_s;&\\
																					-N_{(l_s,-n)}+N_{(i,-m_s)}&{\rm if}\ t=m_s;&\\
																					-N_{(d_s,-n)}+N_{(i,-d_s)}&{\rm if}\ i=d_s;& (***)\\
																					-\sum\limits_{s=1}^r(N_{(d_s,-n)})^2+N_{(i,-i)}^2+2N_{(i,-n)}&{\rm if}\ i=n;&\\
																					N_{(i,-t)}&{\rm otherwise};&\\
																					\end{array}\right.\\
								\end{array}$$

Note that all the equations containing variable $N_{(s,\pm n)}$ for some $s$
are linearly independent of the system $\widehat Eq$ and
there are exactly $k-1$ such equations: out of them $k-2$ are
$\{Eq_{(i_s,n)}\}_{s=1}^{p-1}\cup\{Eq_{(j_s,-n)}\}_{s=1}^{p-1}\cup\{Eq_{(l_s,-n)}\}_{s=1}^{q}\cup\{Eq_{(m_s,-n)}\}_{s=1}^{q}\cup\{Eq_{(d_s,-n)}\}_{s=1}^r$
and one is $Eq_{(n,-n)}$.

There are also $i-1-|\{i_s<i\}|$ equations of type $(**)$ involving $N_{(t,i)}$ where $t<i$ and $t\ne i_s$.
Since $i$ is a fixed point of $\widehat L$ one has that the only equations of
$\widehat Eq$ involving $N_{(t,i)}$ where $t\not\in \{i_s\}_{i_s<i}$ are 
$$\begin{array}{ll} Eq_{(i_s,i)}=-N_{(j_s,i)}& {\rm if}\ j_s<i,\\
										Eq_{(i,-m_s)}=-N_{(l_s,i)}& {\rm if}\ l_s<i,\\
										Eq_{(l_s,-i)}=-N_{(m_s,i)}& {\rm if}\ m_s<i,\\
										Eq_{(d_s,-i)}=-N_{(d_s,i)}& {\rm if}\ d_s<i.\\
										\end{array}$$
So that one has that among $i-1-|\{i_s<i\}|$ equations of type $(**)$ there are
$$i-1-|\{i_s<i\}|-|\{j_s<i\}|-|\{l_s<i\}|-|\{m_s<i\}|-|\{d_s<i\}|=f_L([1,i])$$
new linearly independent equations.

There are also $n-(p+2q+r)=n-k+p$ equations of type $(***)$
%$\genfrac{(}{)}{0pt}{}{***}{}$
involving only $N_{(i,-t)}$ where $t\not\in\{j_s,l_s,m_s,d_s\}$ for all possible $s$.
And the only equations of $\widehat Eq$ involving $N_{(i,-t)}$ where
$t\not\in\{j_s,l_s,m_s,d_s\}$ for all possible $s$ are  $Eq_{(i,-j_s)}=N_{(i,-i_s)}$
where $1\leq s\leq p-1$ so that subsystem $(***)$
%$\genfrac{(}{)}{0pt}{}{***}{}$
provides us with $n-k+1$ new linearly independent equations.

Summarizing, we get that $(*)$ adds $k-1+f_L([1,i])+n-k+1=n+f_L([1,i])$ new linearly independent equations, exactly as demanded.
\end{proof}

%------- B_{2n}-orbit closures --------
\section{Description of $\B_{2n}-$orbit closures}\label{5}
%------- Partial order --------
\subsection{Partial order on $\B_{2n}-$orbits}\label{4.1a} 
%For $L\in {\bf SLP}_{2n}$ let
%and $\ov{\mathcal B}_L$ denote its closure with respect to Zariski topology.
%In particular, $\ov{\mathcal B}_L$ as a spherical variety, is a finite union of $\B-$orbits.

The aim of this subsection is to show that the restriction of partial order $\preceq$ on ${\bf LP}_{2n}$ defined in \ref{2.8}
to ${\bf SLP}_{2n}$ defines the closures of $\B_{2n}-$orbits of square 0 in $\nil_{2n}$, namely:
\begin{thm} For $L\in {\bf SLP}_{2n}$ one has
$$\ov{\mathcal B}_L=\bigsqcup\limits_{\genfrac{}{}{0pt}{}{L'\in {\bf SLP}_{2n}:} {L'\preceq L}}\mathcal B_{L'}$$
\end{thm}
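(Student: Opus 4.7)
The plan is to sandwich $\overline{\mathcal B}_L$ between the two sides using the $A_n$-result recalled in Theorem \ref{1.3} together with the embedding from \ref{3.2}. Since $\mathcal B_L\subset\mathfrak B(Y_L)$ inside $\mathfrak n_{\Sl_{2n}}$, taking closures yields $\overline{\mathcal B}_L\subset\overline{\mathfrak B(Y_L)}$, and since $\mathcal X_{2n}^{(2)}$ is closed in $\nil_{2n}$,
$$\overline{\mathcal B}_L\subseteq\overline{\mathfrak B(Y_L)}\cap\mathcal X_{2n}^{(2)}.$$
By Theorem \ref{1.3}(ii) the right-hand side is the disjoint union of the pieces $\mathfrak B(Y_M)\cap\mathcal X_{2n}^{(2)}$ for $M\preceq L$ in ${\bf LP}_{2n}$. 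Combining with the classification $\mathcal X_{2n}^{(2)}=\bigsqcup_{L'\in{\bf SLP}_{2n}}\mathcal B_{L'}$ from \ref{4.1} and using $\mathcal B_{L'}\subset\mathfrak B(Y_{L'})$ together with the disjointness of distinct $\Sl_{2n}$-Borel orbits, the only surviving pieces correspond to $M=L'\in{\bf SLP}_{2n}$. This already gives the easy inclusion
$$\overline{\mathcal B}_L\subseteq\bigsqcup_{\substack{L'\in{\bf SLP}_{2n}\\ L'\preceq L}}\mathcal B_{L'}.$$

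\textbf{Reverse inclusion via covering relations.} For $\supseteq$ it suffices to verify $\mathcal B_{L'}\subseteq\overline{\mathcal B}_L$ for every covering relation $L'\prec L$ in the induced poset $({\bf SLP}_{2n},\preceq)$. I would first enumerate the covering moves, grouped by their interaction with the symmetry: deletions of a central arc $(-i,i)$ or of a symmetric pair of non-central arcs; one-step shortenings of a mirror pair of arcs (at the left or right endpoint); resolutions of crossings that couple an arc to its mirror; and collapses of a mirror pair $(\pm i,j),(\mp i,-j)$ into a central arc with the corresponding compensating reshuffle. For each move I would exhibit a one-parameter family $g(t).X_L$, with $g(t)\in\B_{2n}$ built from the root unipotents $U_\alpha(a)$ and torus elements $T_i(a)$ listed in \ref{2.3}, such that $g(t).X_L\in\mathcal B_L$ for $t\neq 0$ while $\lim_{t\to 0}g(t).X_L=X_{L'}$. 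To keep the case analysis tractable, I would induct on $n$ via the projection $\pi$ of \ref{2.9a}: any covering move that avoids the indices $\pm n$ descends to a covering in ${\bf SLP}_{2(n-1)}$ and is covered by the inductive hypothesis, while the remaining moves --- those touching an endpoint in $\{\pm n\}$ --- are treated directly from the formulas in \ref{2.3}.

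\textbf{Main obstacle.} The principal difficulty is combinatorial rather than geometric: a covering relation in ${\bf SLP}_{2n}$ is rarely a single covering in the ambient poset $({\bf LP}_{2n},\preceq)$, because the symmetry constraint can force two non-symmetric elementary moves to be performed simultaneously, so $L$ and $L'$ may be separated in ${\bf LP}_{2n}$ by non-symmetric intermediates. One must therefore classify the symmetric "jumps" $L'\prec L$ in ${\bf SLP}_{2n}$, verify in each case both the rank-matrix inequality $R_{L'}\preceq R_L$ and the absence of an intermediate $L''\in{\bf SLP}_{2n}$, and then match it with an explicit degeneration. The degenerations themselves are routine --- each one is a short calculation with the explicit formulas of \ref{2.3} --- but the number of cases, varying with the types of arcs involved (non-central pairs, mirror-crossing pairs, central arcs $(-k,k)$, and their combinations), is substantial, which is precisely why Section \ref{5} and Appendix A of the paper are devoted to carrying out this enumeration.
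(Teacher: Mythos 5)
Your proposal follows essentially the same route as the paper: the easy inclusion via the embedding $\mathcal B_L\subset\mathfrak B(Y_L)$ and Theorem \ref{1.3}, and the reverse inclusion by classifying the covering relations of $({\bf SLP}_{2n},\preceq)$ through the elementary moves of the ambient poset $({\bf LP}_{2n},\preceq)$ followed by symmetrization of rank matrices and explicit one-parameter degenerations $g(t).X_L\to X_{L'}$; you also correctly isolate the key obstacle, namely that a symmetric covering need not be a covering in ${\bf LP}_{2n}$. The deferred case analysis is exactly what \ref{5.2}, \ref{5.3} and Appendix A carry out (the paper organizes it via the sets $D(L)$ and $N(L)$ and a direct enumeration of moves rather than your induction on $n$ through $\pi$, but the degenerations are local to the arcs involved, so the two bookkeeping schemes amount to the same verification).
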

\noindent
{\it Outline of the proof:}\\
Note that by \ref{3.2} for $L,L'\in {\bf SLP}_{2n}$ $X_L, X_{L'}\in \nil_{2n}$ can be regarded (up to signs) 
as $Y_L, Y_{L'}\in \nil_{\Sl_{2n}}$ and $\B_{2n}$ as a subgroup of $\B_{SL_{2n}}$ 
so that $X_{L'}\in \ov{\mathcal B}_L$ implies $Y_{L'}\in \ov{\mathcal{B}}(Y_L)$ and respectively 
$\mathcal B_{L'}\subset \ov{\mathcal B}_L$ implies $\mathfrak B(Y_{L'})\subset \ov{\mathfrak B}(Y_L)$. 
Thus, it is obvious that $\ov{\mathcal B}_L\subseteq\bigsqcup\limits_{\genfrac{}{}{0pt}{}{L'\in {\bf SLP}_{2n}:}{ L'\preceq L}}\mathcal B_{L'}$.

In order to prove the other inclusion let $A(L):=\{L'\in {\bf SLP}_{2n}\ :\ L'\prec L\}$ and set $C(L),D(L),N(L)\subset A(L)$ to be
$$\begin{array}{l}C(L):=\{L'\in A(L)\ :\ {\rm if}\ L^{\prime\prime}\in A(L)\ {\rm satisfies}\ L'\preceq L^{\prime\prime}\preceq L\ \Rightarrow\ L^{\prime\prime}=L\ 
												{\rm or} \ L^{\prime\prime}=L' \};\\
									D(L):= C(L)\cap {\bf SLP}_{2n}(l(L));\\
									N(L):=\{L'\in A(L)\ :\ l(L')<l(L)\ {\rm and \ if}\ L^{\prime\prime}\in A(L)\ {\rm holds}\ L'\preceq L^{\prime\prime}\preceq L\ \Rightarrow\ 
									L^{\prime\prime}=L' \};\\
									\end{array}$$
Obviously it is enough to determine $D(L)$ and $N(L)$ and to show that
for $L'\in D(L)\cup N(L)$
$$\mathcal B_{L'}\subset\ov{\mathcal B}_L.\eqno{(*)}$$

Indeed, for any $L^{\prime\prime}\in A(L)\setminus (D(L)\cup N(L))$
there exists $L'\in D(L)\cup N(L)$ such that $L'\succ L^{\prime\prime}$. 
Thus, proof of $(*)$ will complete the proof of the theorem since by \ref{2.7} 
the boundary of $\mathcal B_L$ is  $\bigcup\limits_{L'\in C'(L)}\mathcal B_{L'}$,  where
$$C'(L)=\{L'\in {\bf SLP}_{2n}\ :\ \mathcal B_{L'}\subset \ov{\mathcal B}_L\ {\rm and}\ {\rm codim}_{\ov{\mathcal B}_L}\mathcal B_{L'}=1\}$$
is a subset of $A(L)$. Thus, if $(*)$ is true $C'(L)\subset D(L)\cup N(L)$ and also $C'(L)\cap N(L)=C(L)\cap N(L)$ so that $C'(L)=C(L).$
Note that we get automatically that $D(L)\subset C'(L)$ and $L'\in N(L)\cap C(L)$ iff ${\rm codim}_{\ov{\mathcal B}_L}\mathcal B_{L'}=1.$

In order to define $D(L)$ and $N(L)$ we consider
$L\in {\bf SLP}_{2n}(k)$ as an element of ${\bf LP}_{2n}(k)$ and define $\widehat A(L):=\{L'\in {\bf LP}_{2n}\ :\ L'\prec L\}$ and respectively
$$\begin{array}{l}\widehat C(L):=\{L'\in \widehat A(L)\ :\ {\rm codim}_{\ov{\mathfrak B}(Y_L)}\mathfrak B(Y_{L'})=1  \};\\
									\widehat D(L):= \widehat C(L)\cap {\bf LP}_{2n}(k);\\
									\widehat N(L):=\{L'\in \widehat A(L)\, :\, l(L')<k\ {\rm and \ if}\ L^{\prime\prime}\in \widehat A(L)\ {\rm holds}\ L'\preceq L^{\prime\prime}\preceq L\ 																	\Rightarrow\ L^{\prime\prime}=L' \};\\
									\end{array}$$
									
Our strategy to construct $D(L),\ N(L)$ is as follows:\\
Note that by definitions of $D(L)$ and $\widehat D(L)$ (resp. $N(L)$ and $\widehat N(L)$) for any $L'\in D(L)$ (resp. $L'\in N(L)$) 
there exists $L^{\prime\prime}\in \widehat D(L)$ (resp. $L^{\prime\prime}\in \widehat N(L)$) 
such that $L^{\prime\prime}\succeq L'.$ Since $L'\in {\bf SLP}_{2n}$ one has
$R_{L'}$ is symmetric around antidiagonal, namely $(R_{L'})_{s,t}=(R_{L'})_{2n+1-t,2n+1-s}$.

If $L^{\prime\prime}\in {\bf SLP}_{2n}$ then $L'=L^{\prime\prime}$, otherwise let us define $SR_{L^{\prime\prime}}$ 
to be the "symmetrization" of $R_{L^{\prime\prime}}$, that is
$$(SR_{L^{\prime\prime}})_{s,t}:=\min\{(R_{L^{\prime\prime}})_{s,t},(R_{L^{\prime\prime}})_{2n+1-t,2n+1-s}\}.$$
Obviously, by the symmetry we get $R_{L'}\preceq SR_{L^{\prime\prime}}$. In particular,
\begin{itemize}
\item{} If there exists $\widehat L\in {\bf LP}_{2n}$ such that
$R_{\widehat L}=SR_{L^{\prime\prime}}$ then $L'=\widehat L\in D(L)$ (resp. in $N(L)$) 
and we are done. As we show in the next subsection this is exactly the case of $N(L)$.
\item{} If there is no such $\widehat L$ (and this happens in some cases  of $D(L)$) we have to find the set of 
maximal symmetric link patterns $\{L_i\}$ satisfying $R_{L_i}\prec SR_{L^{\prime\prime}}$.
Note that this subset can contain more than one element and that the elements $L_i$ not always in $D(L)$, 
but if we take  $D'(L)$ to be the union of $L^{\prime\prime}\in {\bf SLP}_{2n}$, 
$\widehat L$ and  $\{L_i\}$ obtained for all possible $L^{\prime\prime}\in \widehat D(L)$ we get that $D(L)\subset D'(L).$ 
Thus it is enough to define $D'(L)$ and then to extract $D(L)$ from it and to show $(*)$ for any $L'\in D(L).$
\end{itemize}

In the next subsection we construct $N(L)$ according to this plan and in the subsections \ref{5.3} we construct $D(L)$ according to this plan.

\begin{rem} In order to prove the theorem we need only to compute $N(L)\cap C(L)$ 
but for our further applications to orbital varieties we need all of $N(L).$
\end{rem}

We need some notation. Let $L\in {\bf LP}_{2n}$. For $\{(i_s,j_s)\}_{s=1}^k\in L$ let $L^-_{(i_1,j_1)\ldots(i_k,j_k)}$ 
be a new link pattern obtained from $L$ by deleting the arcs $\{(i_s,j_s)\}_{s=1}^k.$ 
For $i,j\in Fp(L)$ put $L(i,j)=(i,j)L$ to be obtained from $L$ by adding the arc $(i,j)$.

Note also that on one hand we consider link patterns of $-n,\ldots,-1,1,\ldots,n$ so that all the intervals are $[s,t]$ where $-n\leq s<t\leq n$ and $s,t\ne 0.$ 
On the other hand $R_L$ is a strictly upper triangular $2n\times 2n$ matrix so that its possible non-zero elements are with double indexes $1\leq s'<t'\leq 2n$.
Thus in what follows we use always put $s,t \ :\ -n\leq s<t\leq n,\ s,t\ne 0$ and resp. we define $s',t'\ :\ 1\leq s'<t'\leq 2n$ where
$$s'(resp.\ t')=\left\{\begin{array}{ll}s+n+1\ (resp.\ t+n+1) &{\rm if}\ s<0\ (resp.\ t<0);\\
                                             s+n\ (resp.\ t+n)&{\rm otherwise};\\
                                             \end{array}\right.$$
%------- Construction of N(L) --------
\subsection{Construction of $N(L)$}\label{5.2}
We start with a simpler $N(L)$.

For $L\in {\bf LP}_{2n}$ arc $(i,j)\in L$ is called {\it external} if there is no $(i',j')\in L$ 
such that $i'<i<j<j'$. Let $E(L)$ be the set of its external arcs.

By \cite{Me1} $\widehat N(L)=\{L^-_{(i,j)}\ :\ (i,j)\in E(L)\}$.

Further note that if $L\in{\bf SLP}_{2n}$ then for any $(\pm i, j)\in E(L)$ one has either $(-i,j)=(-i, i)$ 
(and if there exists such external arc it is unique) or both\\  
$(i,\pm j),(-i,\mp j)\in E(L)$.
Put $E'(L)=\{(i,\pm j)\in E(L)\}.$

By symmetry considerations it is enough to consider $L_{\left\langle i,j\right\rangle}^-\in \widehat N(L)$ where $\left\langle i,j\right\rangle\in E'(L).$
\begin{prop} Let $L\in {\bf SLP}_{2n}$. One has  $|N(L)|=|E'(L)|$ and for $(i,\pm j)\in E'(L)$ the corresponding element of $N(L)$ is defined as follows
$$L':=\left\{\begin{array}{ll}L^-_{(i,j)}&{\rm if}\ j= -i;\\
L^-_{(i,\pm j)(-i,\mp j)}(-j, j)&{\rm otherwise};\\
\end{array}\right.$$
For any $L'\in N(L)$ one has $L'\in \ov{\mathcal B}_L.$
\end{prop}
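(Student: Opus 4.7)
The plan is to follow the strategy outlined in \ref{5.2}: by the Type $A$ result of \cite{Me1} we have $\widehat N(L)=\{L^-_{(i,j)}:(i,j)\in E(L)\}$, so any $L'\in N(L)$ must sit under some $L^-_{(i,j)}\in\widehat N(L)$, and since $L'$ is symmetric while $L^-_{(i,j)}$ in general is not, $L'$ must satisfy $R_{L'}\preceq SR_{L^-_{(i,j)}}$. The task reduces to identifying the unique symmetric link pattern realizing equality $R_{L'}=SR_{L^-_{(i,j)}}$. Since $L$ is symmetric, the mirror involution acts on $E(L)$ and $E'(L)$ picks a representative from each orbit, so parametrizing candidates by $(i,\pm j)\in E'(L)$ is natural.

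For Case A, when $j=-i$, the external arc is itself symmetric so $L^-_{(i,j)}$ already belongs to ${\bf SLP}_{2n}$ and we take $L'=L^-_{(i,j)}$. Writing $m:=|i|$, the arc $(-m,m)$ uses the endpoints $\pm m$ exclusively (arcs in a link pattern being endpoint-disjoint), so no summand of $X_L$ other than $X_{2e_m}$ involves index $m$. The torus action of $T_m(t)$ sends $X_{2e_m}\mapsto t^2X_{2e_m}$ and fixes every other summand of $X_L$, so the limit of $T_m(t).X_L$ as $t\to 0$ equals $X_L-X_{2e_m}=X_{L'}$, placing $X_{L'}\in\overline{\mathcal B}_L$.

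For Case B, when $j\ne -i$, the external arc $(i,\pm j)$ and its distinct mirror $(-i,\mp j)$ both belong to $L$, and I will verify that the candidate $L':=L^-_{(i,\pm j)(-i,\mp j)}(-j,j)$ satisfies $R_{L'}=SR_{L^-_{(i,\pm j)}}$. Compute $R_{L'}-R_L$ at any interval $[s,t]$ as the contribution of the added symmetric arc $(-j,j)$ minus the contributions of the two removed arcs; a case analysis according to the position of $[s,t]$ relative to the thresholds $\pm i,\pm j$ shows that this difference equals $-1$ precisely when $[s,t]$ contains at least one of $(i,\pm j)$ and $(-i,\mp j)$, and $0$ otherwise (the bridging arc $(-j,j)$ lies in $[s,t]$ exactly in the case where both removed arcs do, supplying the $+1$ that compensates for the double subtraction). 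On the other hand $SR_{L^-_{(i,\pm j)}}-R_L$ equals $-1$ at an entry $(s',t')$ iff the removed arc $(i,\pm j)$ lies in $[s,t]$ or in its antidiagonal mirror $[-t,-s]$, and by the symmetry of $L$ the latter is equivalent to $(-i,\mp j)\in[s,t]$. Hence both differences coincide, giving $R_{L'}=SR_{L^-_{(i,\pm j)}}$, so $L'$ is the unique symmetric link pattern realizing this symmetrized rank matrix. Any symmetric $L^+$ with $L'\prec L^+\preceq L^-_{(i,\pm j)}$ would satisfy $R_{L^+}\preceq SR_{L^-_{(i,\pm j)}}=R_{L'}$, forcing $L^+=L'$, which establishes the required maximality.

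The map $(i,\pm j)\mapsto L'$ is injective because distinct external pairs modify disjoint sets of arcs, and surjective because every element of $N(L)$ arises as above, giving $|N(L)|=|E'(L)|$. To complete the proof of $\mathcal B_{L'}\subset\overline{\mathcal B}_L$ in Case B, apply the identity $U_{e_j\pm e_i}(a).X_{e_j\mp e_i}=X_{e_j\mp e_i}\mp 2aX_{2e_j}$ recorded in \ref{2.3}; externality of the pair together with the action table ensure that this conjugation fixes every other summand of $X_L$ because the endpoints $\pm i,\pm j$ occur as endpoints of no other arc of $L$. Then conjugate by $T_j(t)$, which scales $X_{e_j\mp e_i}$ by $t$ and $X_{2e_j}$ by $t^2$, and choose $a(t)=\mp 1/(2t^2)$: the resulting element equals $tX_{e_j\mp e_i}+X_{2e_j}$ plus the unchanged remaining summands of $X_L$, and its limit as $t\to 0$ is exactly $X_{L'}$. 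The main obstacle is the case analysis verifying $R_{L'}=SR_{L^-_{(i,\pm j)}}$, which requires tracking how the three indicators (one added arc, two removed arcs) interact across the full spectrum of intervals $[s,t]$ split by whether $[s,t]$ lies on one side of $0$, straddles $0$, and clears the various $\pm i,\pm j$ thresholds.
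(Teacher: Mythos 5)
Your proposal is correct and follows essentially the same route as the paper: reduce to $\widehat N(L)=\{L^-_{(i,j)}:(i,j)\in E(L)\}$ via the type $A$ result, verify that $SR_{L^-_{(i,\pm j)}}$ is realized by the stated symmetric link pattern $L'$, and exhibit an explicit one-parameter degeneration in $\B_{2n}$ using the action table. The only cosmetic difference is the choice of degenerating family (you rescale with $T_j(t)$, $t\to 0$, after $U_{e_j\pm e_i}(\mp 1/(2t^2))$, whereas the paper uses $T_i(m^{\pm 1})U_{e_j\pm e_i}(\mp 0.5)$ with $m\to\infty$); both limits give $X_{L'}$.
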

\begin{proof}
For $(-i,i)\in E'(L)$ we get that $L^-_{(-i,i)}$ is symmetric and it is in $\widehat N(L)$ thus it is in $N(L)$.
Put $B_m:=H_i(\frac{1}{m})$, then $ \lim\limits_{m\rightarrow\infty} {B_m.X_{L}}=X_{L_{(-i,i)}^-}$.

For $(i,\pm j)\in E'(L)$ such that $j\ne -i$ put $L^{\prime\prime}=L^-_{(i,\pm j)(-i,\mp j)}$ and put $\widehat j=\pm j$ so that $(i,\pm j)=(i,\widehat j)$. One has
$$R_L=R_{L^{\prime\prime}}+R_{(i,\widehat j)(-i,-\widehat j)}\quad{\rm and}\quad R_{L^-_{(i,\widehat j)}}=R_{L^{\prime\prime}}+R_{(-i,-\widehat j)}$$
so that
$$\begin{array}{rl}
(R_{L^-_{(i,j)}})_{s',t'}&=\left\{ \begin{array}{ll}(R_{L^{\prime\prime}})_{s',t'}+1&{\rm if}\ s\leq \min\{-i,-\widehat j\} \ {\rm and}\  t\geq \max\{-i,-\widehat j\} ;\\
                                             (R_{L^{\prime\prime}})_{s',t'}&{\rm otherwise};\\
                                            \end{array}\right.\\
{\rm and\ respectively,}&\\
(SR_{L^-_{(i,\pm j)}})_{s',t'}&
=\left\{ \begin{array}{ll}(R_{L^{\prime\prime}})_{s',t'}+1&{\rm if}\ s\leq -j\ {\rm and}\  t\geq j;\\
                                             (R_{L^{\prime\prime}})_{s',t'}&{\rm otherwise};\\
                                            \end{array}\right.\\
                                            \end{array}$$
Further note that $SR_{L^-_{(i,\widehat j)}}=R_{L^{\prime\prime}(j,-j)}$ so that $L'=L^-_{(i,\widehat j)(-i,-\widehat j)}(-j,j).$
To complete the proof note that for $B_m:=T_i(m^{sign(\widehat j)})U_{e_j+sign(\widehat j) e_i}(-sign(\widehat j) 0.5)$ one has
$ \lim\limits_{m\rightarrow\infty} {B_m.X_{L}}=X_{L'}$.
\end{proof}
To finish with $N(L)$ note
\begin{lem}  For $(i,\pm j)\in E'(L)$ and $L'$ a corresponding element of $N(L)$ one has
$\dim{\mathcal B}_L-\dim\mathcal B_{L'}=1$ if and only if $(i,\pm j)=(i,-j)$ and $Fp(L)\subset [-i,i]$. In particular,
this is a necessary and sufficient condition for $L'$ to be in $C(L).$
\end{lem}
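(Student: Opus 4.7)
The dimension formula $\dim\mathcal B_L=d(k,n)-c(L)-b(L)$ from the theorem in \ref{4.2} turns the question into combinatorics. In every sub-case of the preceding Proposition one has $l(L')=k-1$, so $d(k,n)-d(k-1,n)=n-k+1$. Writing $\Delta c:=c(L)-c(L')$ and $\Delta b:=b(L)-b(L')$,
\[
\dim\mathcal B_L-\dim\mathcal B_{L'}=(n-k+1)-\Delta c-\Delta b.
\]
Each arc of $L$ contributes exactly one positive endpoint, so $|Fp^+(L)|=n-k$, and the codim-$1$ equality reduces to $\Delta c+\Delta b=n-k$.

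The plan is to treat three sub-cases of the Proposition separately: (I) the external arc is symmetric, so $L'=L^-_{(-i,i)}$ (the case $j=-i$); (II) it is the ``crossing'' pair $\{\langle -j,i\rangle,\langle -i,j\rangle\}$ with $j>i$, so $L'=L^-_{(i,-j)(-i,j)}(-j,j)$; (III) it is the ``non-crossing'' pair $\{\langle i,j\rangle,\langle -j,-i\rangle\}$ with $j>i$, so $L'=L^-_{(i,j)(-i,-j)}(-j,j)$. In the lemma's notation, (I) and (II) together form the case $(i,\pm j)=(i,-j)$ (with $j=i$ in (I)), while (III) is the case $(i,\pm j)=(i,j)$. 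In each sub-case I partition the remaining arcs of $L$ by the location of their endpoints in the intervals
\[
I_1=[-n,-j),\ I_2=(-j,-i),\ I_3=(-i,i),\ I_4=(i,j),\ I_5=(j,n],
\]
(with $j:=i$, $I_2=I_4=\emptyset$ in (I)), and use externality of the removed arc(s) to kill the configurations $(I_1,I_4),(I_1,I_5),(I_2,I_5)$ in (II) and additionally $(I_1,I_3),(I_3,I_5)$ in (III).

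A direct computation---writing $c_L(\alpha_s)$ and $c_{L'}(\beta)$ explicitly, decomposing $c_L(\gamma)-c_{L'}(\gamma)$ for each remaining $\gamma$, and tracking which positive fixed points gain or lose bridges---then yields the key identities. In sub-case (I) one has $\Delta c=c_L((-i,i))$, which by externality equals $b_{L'}(i)$, while $\Delta b=f_L([1,i-1])-b_{L'}(i)$, so $\Delta c+\Delta b=f_L([1,i-1])$. In sub-case (II) the bookkeeping gives $\Delta c=1+(I_3,I_4)+(I_3,I_5)+(I_2,I_4)$ and $b_{L'}(i)=1+(I_2,I_4)+(I_3,I_4)+(I_3,I_5)$; combined with $b_L(f)-b_{L'}(f)=[0<f<i]$ for $f\in Fp^+(L)$, the terms $(I_2,I_4),(I_3,I_4),(I_3,I_5)$ cancel and again $\Delta c+\Delta b=f_L([1,i-1])$. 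In sub-case (III) the analogous bookkeeping, using $b_L(f)-b_{L'}(f)=[i<f<j]-[0<f<j]$, produces $\Delta c+\Delta b=-f_L([1,i-1])-1$.

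Consequently, in (I) and (II) the difference equals $1$ iff $f_L([1,i-1])=|Fp^+(L)|$, i.e. iff every positive fixed point of $L$ lies in $[1,i-1]$; by symmetry of $L$ and the fact that $\pm i\in Ep(L)$ this is exactly $Fp(L)\subset[-i,i]$. In (III), $\dim\mathcal B_L-\dim\mathcal B_{L'}=n-k+2+f_L([1,i-1])\geq 2$, so the codim-$1$ condition never holds. These together prove the biconditional. The ``in particular'' statement is immediate from the definition $C(L)=\{L'\in A(L):\mathrm{codim}_{\ov{\mathcal B}_L}\mathcal B_{L'}=1\}$ and Timashev's equidimensionality (\ref{2.7}), which make $L'\in N(L)\cap C(L)$ equivalent to $\dim\mathcal B_L-\dim\mathcal B_{L'}=1$. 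The main technical obstacle is the combinatorial accounting in sub-case (II): one must carefully decompose $\sum_\gamma(c_L(\gamma)-c_{L'}(\gamma))$ and verify that among the three indicators $A_1(\gamma),A_2(\gamma),B(\gamma)$ (recording whether $\alpha_1,\alpha_2,\beta$ are counted by the crossing function of $\gamma$), only $A_2$ contributes, and precisely $(I_2,I_4)$.
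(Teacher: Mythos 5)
Your proof is correct and supplies exactly the ``absolutely straightforward computation'' that the paper skips: you apply the dimension formula $\dim\mathcal B_{L}=d(k,n)-c(L)-b(L)$ of \ref{4.2}, note $l(L')=k-1$ so the question becomes $\Delta c+\Delta b=n-k=|Fp^+(L)|$, and the case-by-case bookkeeping yielding $\Delta c+\Delta b=f_L([1,i-1])$ for $(i,-j)$ (including the central arc $j=i$) and $-f_L([1,i-1])-1$ for $(i,j)$ gives precisely the stated criterion. One remark: you correctly use the non-strict convention $k\geq -j$ for a non-negative crossing (the one consistent with the paper's worked example, where the displayed formula's strict inequality is a typo), which is what produces the extra $+1$ in $\Delta c$ from the mutual crossing of $(i,-j)$ and $(-i,j)$ in your sub-case (II).
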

The proof is an absolutely straightforward computation, so we skip it.

%------- Construction of D(L) --------
\subsection{Construction of $D(L)$}\label{5.3}
{\it $\widehat D(L):$}
Recall that in the case of $L \in LP_{2n}$ there are exactly 3 types of elements in $\widehat D(L)$ obtained by three types of elementary minimal moves.
Let us recall these moves and the conditions for their minimality:
\begin{itemize}
	\item[(M1)] Let $(i,j)\in L$ and let $l_i$ be the maximal fixed point $l_i<i$ (resp. let $r_j$ be the minimal fixed point $r_j>j$).
The first elementary move is obtained by moving  $i$ to $l_i$,
that is $L'=L^-_{(i,j)}(l_i,j)$ (resp. by moving  $j$ to $r_j$, that is $L'=L^-_{(i,j)}(i,r_j)$.
The condition for minimality of such a move is that there are no $(s,t)\in L$ such that $l_i<s<i<j<t$ (resp. such that $s<i<j<t<r_j$).

	\item[(M2)] Let $(i,j),(k,l)\in L$, they are called consecutive if $i<j<k<l$.
The second elementary move is exchanging $j$ and $k$ that is $L'=L^-_{(i,j),(k,l)}(i,k)(j,l)$.
The condition for the minimality of such a move is that there are no fixed points on $[j,k]$
and there are no $(s,t)\in L$ such that either $s<i<j<t<k$ or $j<s<k<l<t$.

	\item[(M3)] Let $(i,j),(k,l)\in L$, they are called concentric if $i<k<l<j$.
The third elementary move is exchanging $i$ and $k$ that is $L'=L^-_{(i,j),(k,l)}(i,l)(k,j)$.
The condition for the minimality of such a move is that there are no $(s,t)\in L$ which is concentric between them,
that is no $(s,t)$ such that $i<s<k<l<t<j$.
\end{itemize}

Note that if $L'\in \widehat D(L)$ is not symmetric then ${\mathfrak B}(Y_{L_S})$ for a symmetric
$L_S \ : L_S \prec L'$ must be of at least codimension 1 in $\overline {\mathfrak B}(Y_{L'})$,
that is at least of codimension 2 in $\overline {\mathfrak B}(Y_L)$.
As we show by the construction for any $L_S\in D(L)$ one has
$\codim_{\overline {\mathfrak B}(Y_L)}{\mathfrak B}(Y_{L_S}) \leq 3$.
Moreover a straightforward computation shows that $\dim \mathcal B_L-\dim \mathcal B_{L_S}=1.$

As a result we get three types of elements in $D(L)$ obtained from (M1)-(M3)
by two mirror moves (or by the fact that $L'\in \widehat D(L)$ is symmetric);
we also get the fourth type of elements in $D(L)$ (connected to $(i,-j)(-i,j)$ 
and only these elements are such that
$\codim_{\overline {\mathfrak B}(Y_L)}{\mathfrak B}(Y_{L_S})=3$).

We illustrate each case by the picture of corresponding link patterns where we denote by small points
all points and their arcs which are not changed under the move.
We always assume that the considered arcs satisfy the minimality condition for the corresponding move (M1)-(M3).\\

We have to consider case by case $L^{\prime\prime}$ obtained by moves (M1)-(M3), to constract the corresponding maximal 
symmetric link patterns and to check for symmetric $L'$ obtained in this way that either $\dim \mathcal B_L-\dim\mathcal B_{L'}=1$ and $X_{L'}\in \overline{\mathcal B}_L$, or $\dim \mathcal B_L-\dim\mathcal B_{L'}>1$ and there exists $\widetilde{L}\in{\bf SLP}_{2n}$
such that $L\succ\widetilde L\succ L'$. Here we consider in detail only one (but the most complex) case. All the cases are described completely, however in short in Appendix A. The proofs are similar to the proof below, following the same ideas. 

\begin{prop}
Let $L\in {\bf SLP}_{2n}$ with $(i,j)(k,-l)\in L$ for $1\leq i<j<k<l \leq n$.
Denote $L_0:=L^-_{(i,j)(-i,-j)(k,-l)(-k,l)}$, then $(i,j)$ is under both $(k,-l)$ and $(-k,l)$ and 
\begin{itemize}
\item[(i)] If $(i,j),(-i,-j)\in L$ are (consecutive arcs) satisfying conditions of (M2) then
$L_{I}:=L^-_{(i,j)(-i,-j)}(i,-j)(-i,j)\in D(L);$\\
One has $\lim\limits_{m\rightarrow\infty} {T_j(m).U_{2e_j}(-\frac{1}{m}).X_{L}}=X_{L_{I}}.$

\item[(ii)] If $(i,j),(-k,l)\in L$ are (concentric arcs) satisfying conditions of (M3) then
 $L_{II}:=L_0(i,l)(-i,-l)(-j,j)(-k,k)\in D(L).$\\
Let $U_m:=U_{e_l-e_j}(1).U_{e_k-e_j}(1).U_{e_l-e_k}(0.5).U_{ e_l+e_i}(0.5).U_{e_k+e_i}(1).U_{e_j+e_i}(-0.5)$
and $T_m:=T_k(\sqrt{-1}).T_l(m).T_i(m)$ then $\lim\limits_{m\rightarrow\infty} {T_m.U_m.X_{L}}=X_{L_{II}}.$

\item[(iii)] If $(i,j),(k,-l)\in L$  satisfy conditions of (M3) then\\
$L_{III}:=L_0(i,k)(-i,-k)(j,-l)(-j,l)\in D(L);$ and 
$L_{II}$ from (ii) is obtained via this move as well. Moreover,  $L_{II}\in D(L)$ iff $(i,j),(-k,l)$ satisfy conditions of (M3).\\
One has $\lim\limits_{m\rightarrow\infty} {T_k(m).T_j(-1).T_i(m).U_{e_k-e_j}(1).U_{e_l+e_i}(1).X_{L}}=X_{L_{III}}.$
\end{itemize}
\end{prop}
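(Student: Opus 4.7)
The plan is to follow the programme laid out in Section~\ref{4.1a}: for each of the three configurations one (a) applies the appropriate elementary move (M2) or (M3) from the ambient $A_{2n-1}$ situation to produce some $L^{\prime\prime}\in\widehat D(L)$, (b) forms the symmetrization $SR_{L^{\prime\prime}}$, (c) identifies the maximal symmetric link patterns $L'$ with $R_{L'}\preceq SR_{L^{\prime\prime}}$, and (d) verifies $L'\in D(L)$ by combining the dimension formula of Section~\ref{4.2} with an explicit one-parameter curve in $\B_{2n}\cdot X_L$ whose limit is $X_{L'}$.

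Case (i): The pair $(i,j),(-i,-j)$, read in the natural order $-j<-i<i<j$, is consecutive in the sense of (M2); the (M2)-exchange of the inner endpoints produces the arcs $\langle -j,i\rangle$ and $\langle -i,j\rangle$, which are already mirror-symmetric. Hence $L^{\prime\prime}$ lies in ${\bf SLP}_{2n}$ and coincides with $L_I$, and no symmetrization is needed. The displayed one-parameter family is built from the basic relation in Section~\ref{2.3} by which a $U_{2e_\bullet}$-factor deforms $X_{e_j-e_i}$ by an $X_{e_j+e_i}$-term, followed by a torus element $T_\bullet(m)$ rescaling so that the old root vector is absorbed in the limit while the new one survives.

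Case (iii): The pair $\{(i,j),(k,-l)\}$ is concentric in the (M3) sense, so (M3) produces $\langle i,k\rangle$ and $\langle j,-l\rangle$; the parallel mirror (M3) on $\{(-i,-j),(-k,l)\}$ contributes $\langle -i,-k\rangle$ and $\langle -j,l\rangle$. Together with $L_0$ these four arcs assemble the symmetric pattern $L_{III}$, and one checks directly that $R_{L_{III}}=SR_{L^{\prime\prime}}$. The one-parameter family listed combines $U_{e_k-e_j}$ and $U_{e_l+e_i}$, which by Section~\ref{2.3} convert $X_{e_j-e_i}$ and $X_{e_l+e_k}$ into the new root vectors $X_{e_k-e_i}$ and $X_{e_l+e_j}$, together with a torus rescaling that kills the original vectors in the limit. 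The final ``iff'' assertion follows because $R_{L_{II}}\preceq R_{L_{III}}$ by direct comparison of the rank matrices, so $L_{II}$ can be maximal among symmetric predecessors of $L$ only in the scenario where $L_{III}$ is not in $D(L)$, i.e.\ exactly when the concentric hypothesis of (ii) holds in place of that of (iii).

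Case (ii) will be the main obstacle. The (M3) move on the concentric pair $\{(i,j),(-k,l)\}$ produces arcs $\langle -k,j\rangle$ and $\langle i,l\rangle$, which are \emph{not} mirror images. A naïve symmetric completion would adjoin the mirror arc $\langle -j,k\rangle$, but $SR_{L^{\prime\prime}}$ is strictly below that pattern; the maximal symmetric $L'$ with $R_{L'}\preceq SR_{L^{\prime\prime}}$ is obtained instead by replacing the would-be mirror pair by the three strongly orthogonal pieces corresponding to the roots $e_l-e_i,\ 2e_j,\ 2e_k$, namely the four arcs $(i,l)(-i,-l)(-j,j)(-k,k)$, which is exactly $L_{II}$. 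The codimension-one check is a bookkeeping of the quantities $c$ and $b$ of Section~\ref{4.2}: loss of the four original arcs removes certain crossings and bridges, the new arcs of $L_{II}$ introduce others, and the net change in codimension is $+1$. The hardest part will be verifying the explicit degeneration: the six unipotent factors of $U_m$ are designed so that successive applications produce, via the commutator formulas tabulated in Section~\ref{2.3}, the three new root vectors $X_{e_l-e_i},X_{2e_j},X_{2e_k}$, with intermediate terms of order $1/m$, after which the torus $T_m=T_k(\sqrt{-1})T_l(m)T_i(m)$ scales those intermediate terms away in the limit, leaving precisely $X_{L_{II}}$.
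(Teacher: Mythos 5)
Your overall programme is the same as the paper's (apply the $A$-type move, symmetrize the rank matrix, find the maximal symmetric patterns below it, then verify codimension one and exhibit a degeneration), and your treatment of case (i) is correct. But case (iii) contains a genuine error. You assert that $R_{L_{III}}=SR_{L^{\prime\prime}}$ for $L^{\prime\prime}=L^-_{(i,j)(k,-l)}(i,k)(j,-l)$. This is false: the paper computes $SR_{L^{\prime\prime}}$ entry by entry and shows it is \emph{not} the rank matrix of any symmetric link pattern (roughly, $SR_{L^{\prime\prime}}$ gains $+1$ on the intervals $[j,k]$ and $[-j,j]$ but only $+1$, not $+2$, on $[-j,k]$, which no single symmetric pattern can realize). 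Consequently there are \emph{two} incomparable maximal symmetric link patterns below $SR_{L^{\prime\prime}}$, namely $L_{III}=L_0(i,k)(-i,-k)(j,-l)(-j,l)$ and $L_{II}=L_0(i,l)(-i,-l)(-j,j)(-k,k)$; this is exactly why the proposition states that $L_{II}$ ``is obtained via this move as well,'' a phenomenon your argument cannot produce. (A quick check that they are incomparable: $R_{L_{III}}$ exceeds $R_{L_{II}}$ on the interval $[i,k]$, while $R_{L_{II}}$ exceeds $R_{L_{III}}$ on $[-j,j]$.)

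This error propagates into your justification of the ``iff'' clause. You claim $R_{L_{II}}\preceq R_{L_{III}}$, which contradicts the maximality of both patterns and is false by the computation above; and your conclusion that ``$L_{II}$ can be maximal only when $L_{III}$ is not in $D(L)$'' is not what the proposition asserts --- the two membership conditions are independent, and both $L_{II}$ and $L_{III}$ lie in $D(L)$ simultaneously whenever both minimality hypotheses hold. The correct argument for the ``only if'' direction is the one the paper gives: when $(i,j),(-k,l)$ violate the (M3) minimality condition, there is an arc $\left\langle p,q\right\rangle\in L$ with $-k<p<i<j<q<l$, and one exhibits the explicit symmetric pattern $L'=L^-_{(i,j)(-i,-j)\left\langle p,q\right\rangle\left\langle -q,-p\right\rangle}(i,q)\left\langle p,j\right\rangle(-i,-q)\left\langle -j,-p\right\rangle$ satisfying $L\succ L'\succ L_{II}$, so that $L_{II}\notin D(L)$. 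Without producing such an intermediate pattern your proof of the ``iff'' does not go through. The remaining deferred items (the explicit limits and the $b$,$c$ bookkeeping in case (ii)) are routine and consistent with the paper.
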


The following picture represents the proposition above:
\begin{center}
\begin{picture}(100,170)(0,-60)
\put(-160,8){$L$}
\multiput(-145,10)(5,0){36}%
{\circle*{1}}
\put(-140,10){\circle*{3}}
 \put(-145,2){${}_{-l}$}
\put(-120,10){\circle*{3}}
 \put(-125,2){${}_{-k}$}
 \put(-100,10){\circle*{3}}
 \put(-105,2){${}_{-j}$}
 \put(-80,10){\circle*{3}}
 \put(-85,2){${}_{-i}$}
 \put(-35,10){\circle*{3}}
 \put(-37,2){${}_{i}$}
 \put(-15,10){\circle*{3}}
 \put(-17,2){${}_{j}$}
 \put(5,10){\circle*{3}}
 \put(3,2){${}_{k}$}
\put(25,10){\circle*{3}}
 \put(23,2){${}_{l}$}

 \qbezier(-140,10)(-67.5,70)(5,10)
 \qbezier(-120,10)(-47.5,70)(25,10)
 \qbezier(-100,10)(-90,30)(-80,10)
 \qbezier(-35,10)(-25,30)(-15,10)

\put(37,15){\vector(4,3){17}}
\put(60,68){$L_{I}$}
\multiput(85,70)(5,0){36}%
{\circle*{1}}
\put(90,70){\circle*{3}}
 \put(85,62){${}_{-l}$}
\put(110,70){\circle*{3}}
 \put(105,62){${}_{-k}$}
 \put(130,70){\circle*{3}}
 \put(125,62){${}_{-j}$}
 \put(150,70){\circle*{3}}
 \put(145,62){${}_{-i}$}
 \put(195,70){\circle*{3}}
 \put(193,62){${}_{i}$}
 \put(215,70){\circle*{3}}
 \put(213,62){${}_{j}$}
 \put(235,70){\circle*{3}}
 \put(233,62){${}_{k}$}
\put(255,70){\circle*{3}}
 \put(253,62){${}_{l}$}

 \qbezier(90,70)(162.5,130)(235,70)
 \qbezier(110,70)(182.5,130)(255,70)
 \qbezier(130,70)(162.5,100)(195,70)
 \qbezier(150,70)(182.5,100)(215,70)

\put(37,10){\vector(4,0){20}}
\put(60,8){$L_{II}$}
\multiput(85,10)(5,0){36}%
{\circle*{1}}
\put(90,10){\circle*{3}}
 \put(85,2){${}_{-l}$}
\put(110,10){\circle*{3}}
 \put(105,2){${}_{-k}$}
 \put(130,10){\circle*{3}}
 \put(125,2){${}_{-j}$}
 \put(150,10){\circle*{3}}
 \put(145,2){${}_{-i}$}
 \put(195,10){\circle*{3}}
 \put(193,2){${}_{i}$}
 \put(215,10){\circle*{3}}
 \put(213,2){${}_{j}$}
 \put(235,10){\circle*{3}}
 \put(233,2){${}_{k}$}
\put(255,10){\circle*{3}}
 \put(253,2){${}_{l}$}

\qbezier(90,10)(120,45)(150,10)
 \qbezier(110,10)(172.5,70)(235,10)
 \qbezier(130,10)(172.5,50)(215,10)
 \qbezier(195,10)(225,45)(255,10)

\put(37,5){\vector(4,-3){17}}

\put(60,-52){$L_{III}$}
\multiput(85,-50)(5,0){36}%
{\circle*{1}}
\put(90,-50){\circle*{3}}
 \put(85,-58){${}_{-l}$}
\put(110,-50){\circle*{3}}
 \put(105,-58){${}_{-k}$}
 \put(130,-50){\circle*{3}}
 \put(125,-58){${}_{-j}$}
 \put(150,-50){\circle*{3}}
 \put(145,-58){${}_{-i}$}
 \put(195,-50){\circle*{3}}
 \put(193,-58){${}_{i}$}
 \put(215,-50){\circle*{3}}
 \put(213,-58){${}_{j}$}
 \put(235,-50){\circle*{3}}
 \put(233,-58){${}_{k}$}
\put(255,-50){\circle*{3}}
 \put(253,-58){${}_{l}$}

\qbezier(90,-50)(152.5,10)(215,-50)
\qbezier(130,-50)(192.5,10)(255,-50)
\qbezier(110,-50)(130,-20)(150,-50)
\qbezier(195,-50)(215,-20)(235,-50)

\end{picture}
\end{center}

\begin{proof} The computations showing $X_{L_I}, X_{L_{II}},X_{L_{III}}\in\overline{\mathcal B}_L$ are 
immediate check of the limits written in the proposition. 
We have to show that (M2) in case (i) and (M3) in cases (ii) and (iii) provide us with $L_I,\ L_{II},\ L_{III}$
respectively and only with them.
\begin{itemize}
\item[(i)] Immediately, $L^{\prime\prime}=L^-_{(i,j)(-i,-j)}(i,-j)(-i,j)$ is symmetric and $L_{I}=L^{\prime\prime}\in D(L).$
In this case $\dim \mathfrak B(Y_L)-\dim\mathfrak B(Y_{L_I})=\dim\mathcal B_L-\dim\mathcal B_{L_I}=1.$

\item[(ii)]
For $L^{\prime\prime}=L^-_{(i,j)(-k,l)}(i,l)(j,-k)\in \widehat D(L)$ one can check straightforwardly
$SR_{L^{\prime\prime}}=R_{L_{II}}$ where $L_{II}=L_0(i,l)(-i,-l)(-j,j)(-k,k)$. Further, again straightforwardly,
one gets $\dim \mathcal B_L-\dim\mathcal B_{L_{II}}=1$. Note that in this case $\dim\mathfrak B(Y_L)-\dim\mathfrak B(Y_{L_{II}})=3.$ 

\item[(iii)] Let us compute the last case in more details.
Recall that\\ $L^{\prime\prime}=L^-_{(i,j)(k,-l)}(i,k)(j,-l)\in \widehat D(L)$. Thus,
$$(R_{L^{\prime\prime}})_{s',t'}=\left\{\begin{array}{ll} (R_{L_0})_{s',t'}+4	&{\rm if}\ s\leq -l\ {\rm and}\ t\geq l;\\
                                            (R_{L_0})_{s',t'}+3 &{\rm if}\ s\leq -l\ {\rm and}\ k\leq t<l\ {\rm or}\\
																																					&\quad -l<s\leq -k\ {\rm and}\ t\geq l;\\
                                            (R_{L_0})_{s',t'}+2 &{\rm if}\ s\leq -l\ {\rm and}\ j\leq t<k\ {\rm or}\\
																																					&\quad -l<s\leq -j\ {\rm and}\ k\leq t<l\ {\rm or}\\
																																					&\quad -k< s\leq -j\ {\rm and}\ t\geq k;\\
                                            (R_{L_0})_{s',t'}+1 &{\rm if}\ s\leq -j\ {\rm and}\ -i\leq t<j\ {\rm or}\\
																																					&\quad -l<s\leq -j\ {\rm and}\ -i\leq t< k\ {\rm or}\\
																																					&\quad -j< s\leq i\ {\rm and}\ t\geq k;\\
                                            (R_{L_0})_{s',t'}   &{\rm otherwise};\\
																						\end{array}\right.$$
respectively,
$$(SR_{L^{\prime\prime}})_{s',t'}=\left\{\begin{array}{ll}(R_{L_0})_{s',t'}+4	&{\rm if}\ s\leq -l\ {\rm and}\ t\geq l;\\
                                            (R_{L_0})_{s',t'}+3 &{\rm if}\ s\leq -l\ {\rm and}\ k\leq t<l\ {\rm or}\\
																																					&\quad -l<s\leq -k\ {\rm and}\ t\geq l;\\
                                            (R_{L_0})_{s',t'}+2 &{\rm if}\ s\leq -l\ {\rm and}\ j\leq t<k\ {\rm or}\\
																																					&\quad -l<s\leq -k\ {\rm and}\ k\leq t<l\ {\rm or}\\
																																					&\quad -k< s\leq -j\ {\rm and}\ t\geq l;\\
                                            (R_{L_0})_{s',t'}+1 &{\rm if}\ s\leq -k\ {\rm and}\ -i\leq t<j\ {\rm or}\\
																																					&\quad -l<s\leq -j\ {\rm and}\ j\leq t< k\ {\rm or}\\
																																					&\quad -k<s\leq -j\ {\rm and}\ j\leq t < l\ {\rm or}\\
																																					&\quad -j< s\leq i\ {\rm and}\ t\geq k;\\
                                            (R_{L_0})_{s',t'}   &{\rm otherwise};\\
																						\end{array}\right.$$

Note that $SR_{L^{\prime\prime}}$ is not $R_{L_S}$ for some $L_S\in {\bf SLP}_{2n}$.
Otherwise, if it would be then $(SR_{L^{\prime\prime}})_{n+1-k,n+1-j}=(R_{L_0})_{n+1-k,n+1-j}+1$ and
$(SR_{L^{\prime\prime}})_{n+j,n+1-j}=(R_{L_0})_{n+j,n+1-j}+1$
together with the properties of $R_{L_0}$ would imply\\
$(SR_{L^{\prime\prime}})_{n+1-k,n+j}=(R_{L_0})_{n+1-k,n+j}+2$ which does not happen.

However there are exactly two maximal symmetric link patterns:\\
$L_{III}=L_0(i,k)(-k,-i)(j,-l)(l,-j)$
and $L_{II}=L_0(i,l)(-l,-i)(j,-j)(k,-k)$ (from (ii))
such that
$R_{L_{II}}, R_{L_{III}}<SR_{L^{\prime\prime}}.$
Moreover, $R_{L_{III}} \in D(L)$ since 
$\dim \mathcal B_{L}-\dim \mathcal B_{L_{III}}=1$.
Note that $\dim{\mathfrak B}(Y_L)-\dim{\mathfrak B}(Y_{L_{III}})=2$.

As for $L_{II}$, if $(i,j),(-k,l)$ satisfy conditions of (M3) then $L_{II}\in D(L)$ by (ii). Otherwise there exists $\left\langle p,q\right\rangle \in L$ such that $-k<p<i<j<q<l$ and $L'=L_{(i,j)(-i,-j)\left\langle p,q\right\rangle\left\langle -q,-p\right\rangle}^-
(i,q)\left\langle p,j\right\rangle(-i,-q)\left\langle -j,-p\right\rangle$ satisfies $L\succ L'\succ L_{II}.$ One can check this directly.
\end{itemize}
\end{proof}

%-------- Applications ---------
\section{Applications to orbital varieties of square zero}\label{6}
\subsection{Orbital Varieties of square zero and maximal $\B_{2n}-$orbits}\label{a1}
Recall the notion of an orbital variety from \ref{1.5}.
A general construction of Steinberg (which we explain in the next subsection) provides a way to 
construct orbital varieties, but the construction is complex so it is not easy to get the information out of it. 
Further Mcgovern \cite{Mc1, Mc2} provided a way connecting orbital varieties in types $B_n,\ C_n$ and $D_n$ to standard domino tableaux (SDT), 
so that an orbital variety in ${\mathcal O}_\lambda$ is labelled by SDT of shape $\lambda$.

Orbital varieties are obviously $\B-$stable, so that in particular if $\lambda=(2^k,1^{2n-2k})$ 
each orbital variety of ${\mathcal O}_\lambda\subset\Sp_{2n}$ admits a dense $\B_{2n}-$orbit.

On the other hand by our computations above for $L\in{\bf SLP}_{2n}(k)$ 
one has $\dim \mathcal B_L=0.5 \dim{\mathcal O}_{(2^k,1^{2n-2k})}$ 
if and only if $L$ is  without crossings and without fixed points under the arcs. 
In such a way we get a bijection between maximal $\B_{2n}-$orbits in ${\mathcal O}_{(2^k,1^{2n-2k})}\cap{\mathfrak n}_{2n}$ 
and orbital varieties so that we can get the information on the latter from the theory developed above.

We would like to read the information on an orbital variety from its domino tableau, 
so first of all we have to connect a maximal link pattern with $k$ arcs to a domino tableau of shape $(2^k,1^{2n-2k}).$ 
We do this in the next two subsections.

\subsection{Steinberg's construction and SDT with two columns}\label{a2}
In \cite{St}  Steinberg provided a general construction of an orbital variety for 
a simple Lie algebra $\mathfrak g$ that we explain in short (only for $\Sp_{2n}$, although the general constraction is the same) in this subsection.

%Let $\mathfrak b ={\rm Lie}\, (\B)$ be a Borel subalgebra of $\mathfrak g$ and let 
%$\mathfrak n=\bigoplus\limits_{\alpha\in \Phi^+}X_\alpha$ be its nilradical. 
For $w$ in Weyl group $\W_{2n}$ put $w(\mathfrak n_{2n}):=\bigoplus\limits_{\alpha\in \Phi^+} X_{w(\alpha)}$ 
and consider $\mathfrak n_{2n}\cap w(\mathfrak n_{2n})$. 
This is a subspace in $\mathfrak n_{2n}$ so that there exists the unique nilpotent orbit which 
we denote ${\mathcal O}_w$ such that $({\mathfrak n_{2n}}\cap w(\mathfrak n_{2n}))\cap {\mathcal O_w}$ is dense in $\mathfrak n_{2n}\cap w(\mathfrak n_{2n})$.

By Steinberg's construction ${\mathcal V}_w=\overline {\B.(\mathfrak n_{2n}\cap w(\mathfrak n_{2n}))}\cap \mathcal O_w$ 
is an orbital variety of $\mathcal O_w$ and moreover each orbital variety is obtained in such a way.

One can partition $\W_{2n}$ into (right) cells according to the equivalence relation $w\sim y$ if $\mathcal V_w=\mathcal V_y$. 

Recall that $\W_{2n}$ can be represented as a group of 
$w=\left[\begin{array}{llll}1&2&\ldots&n\\
														a_1&a_2&\ldots&a_n\\ 
														\end{array}\right]$
where $-n\leq a_i\leq n$ and $\{|a_i|\}_{i=1}^n=\{i\}_{i=1}^n.$
In what follows we will write $w\in \W_{2n}$ in a word form, namely $w=[a_1,a_2,\ldots,a_n].$

Recall that $\lambda=(\lambda_1,\ldots,\lambda_k)\in\mathcal P_n$ can be visualized as an array with $k$ rows all starting at the same place on the left where $i$-th row contains $\lambda_i$ cells. Such an array is called a {\it Young diagram of shape $\lambda$}.

Recall that a {\it domino} $[i:i]$ is a Young diagram with two cells filled with number $i$. We call a domino {\it horisontal} if it is of shape $(2)$  and {\it vertical} if it is of shape $(1,1)$. 

A {\it standard domino tableau} (STD) of shape $\lambda=(\lambda_1,\ldots,\lambda _k)\in \mathcal P_{2n}$ is obtained by filling Young diagram of shape $\lambda$ with dominoes $[1:1],\ldots,[n:n]$ in such a way that they increase along a row from left to right and along a column from top to bottom.

Gurfinkel-McGovern algorithm provides a way to associate the ordered pair of SDTs of the same shape $(T(w),S(w))$ to $w\in \W_{2n}$.
One has ${\mathcal O}_w={\mathcal O}_\lambda$ where $\lambda$ is a shape of $T(w)$ and ${\mathcal V}_w={\mathcal V}_y$ if and only if $T(w)=T(y).$ 
Thus we can label orbital varieties by SDTs. In what follows we write $\mathcal V_T:= \mathcal V_w$ if $T=T(w).$
The procedure is involved so we do not describe it in general.

We consider orbital varieties of nilpotency order 2 so that they are labelled by SDT's with two columns. For such SDT we produce $w$ such that $T(w)=T.$

Given a two-column SDT $T$ we say $j\in T_2$ if either a vertical domino $[j:j]$ belongs to the second column 
or a horizontal domino $[j:j]$ belongs to both first and second columns. 
Put $ T_2=\{b_1<\ldots< b_k\}$ to be these entries. 
We say $i\in T_1$ if a vertical domino $[i:i]$ belongs to the first column. 
Put $T_1=\{a_1<\ldots< a_s\}$ to be these entries. 
Put $w_T=[b_k,\ldots,b_1,-a_1,\ldots,-a_s]$. 
Note that we always get a decreasing sequence so that \\
$X_{e_j-e_i}\in {\mathfrak n_{2n}}\cap w_T(\mathfrak n_{2n})$ iff $j\in T_2,\ i\in T_1$ and $i<j$ \\
$X_{2e_i}\in {\mathfrak n_{2n}}\cap w_T(\mathfrak n_{2n})$ iff $i\in T_2$\\
$X_{e_j+e_i} \in {\mathfrak n_{2n}}\cap w_T(\mathfrak n_{2n})$ iff $i, j\in T_2$\\

A straightforward easy check provides $T(w_T)=T.$

As an example for $n=7$, a SDT, its corresponding word and ${\mathfrak n_{14}}\cap w(\mathfrak n_{14})$ are:
$$T=\vcenter{\halign{& \hfill#\hfill\cr
\vsal
\multispan{4}{\hrulefill}\cr
  \ssod
	\vb &\ \,1\ &\vb &\ 2\ \, &\vb\cr
 \ssa
\multispan{4}{\hrulefill}\cr
 \ssal
\vbs & &\, 3 & & \vbs\cr
 \vsal\multispan{4}{\hrulefill}\cr
 \ssal\vbs & &\, 4 & & \vbs\cr
 \vsal\multispan{4}{\hrulefill}\cr
 \ssod\vb &\ \, 5 &\vb &\ 7 &\vb\cr
 \ssa\multispan{4}{\hrulefill}\cr
 \ssod\vb &\ \,6 &\vb\cr
 \ssod
\multispan{3}{\hrulefill}\cr
 }}
 \ \longrightarrow\ \begin{array}{c}w_T=[7,4,3,2,-1,-5,-6]\\ \ \\
 {\mathfrak n_{14}}\cap w_T(\mathfrak n_{14}={\mathrm Span}\left\{\begin{array}{l}X_{e_2-e_1},\, 
X_{e_3-e_1},\, X_{e_4-e_1},\, X_{e_7-e_1}, \\
 X_{e_7-e_5},\, X_{e_7-e_6},\, X_{2e_2},\, X_{2e_3},\, X_{2e_4},\\
 X_{2e_7},\,X_{e_3+e_2},\, X_{e_4+e_2},\, X_{e_7+e_2},\\
X_{e_4+e_3},\, X_{e_7+e_3},\, X_{e_7+e_4}\\
 \end{array}\right\}\\
 \end{array}
$$

\subsection{$SDT \longleftrightarrow link\ pattern$}\label{a3} We draw a symmetric link
pattern for any given two-column SDT. The dominoes in $T$ are
labelled by integers from $1$ to $n$, while the vertices in a
symmetric link pattern $L_T$ are labelled by integers  $-n,\ldots,-1,1,\ldots, n$. So, the identification between the
dominoes of $T$ and the positive part of the link pattern is straightforward.

The integers of $ T_2$ are the positive right ends
 of the arcs in $L_T$.  Starting from $b_1$ and continuing to $b_k$ we connect 
 each $b_i$ to the closest  still free vertex
on its left, and complete at each step the negative part symmetrically.

This algorithm produces a maximal link pattern, that is, without
crossings and without fix points under the arcs. 
Moreover, $(-i,i)\in L_T$  if and only if $[i:i]$ is a horizontal domino of $T$.

For example, for $T$ presented in the previous example, we get
\begin{center}
\begin{picture}(200,50)
\put(-60,10){$L_T=$} \multiput(-25,10)(20,0){14} {\circle*{3}}
 \put(-32,2){${}_{-7}$} %-25
 \put(-12,2){${}_{-6}$} % -5
 \put(8,2){${}_{-5}$}   % 15
 \put(28,2){${}_{-4}$}  % 35
 \put(48,2){${}_{-3}$}  % 55
 \put(68,2){${}_{-2}$}  % 75
 \put(88,2){${}_{-1}$}  % 95
 \put(110,2){${}_{1}$}  %115
 \put(135,2){${}_{2}$}  %135
 \put(155,2){${}_{3}$}  %155
 \put(175,2){${}_{4}$}  %175
 \put(195,2){${}_5$}        %195
 \put(215,2){${}_6$}        %215
 \put(235,2){${}_7$}        %235

 \qbezier(75,10)(85,25)(95,10)      %-2->-1
 \qbezier(115,10)(125,25)(135,10) %1->2
 \qbezier(-25,10)(-15,25)(-5,10)    %-7->-6
 \qbezier(215,10)(225,25)(235,10)   %6->7
 \qbezier(55,10)(105,50)(155,10)    %-3->3
 \qbezier(35,10)(105,65)(175,10)    % -4->4

\end{picture}
\end{center}
Note that this algorithm provides a bijection between maximal link patterns and STD with two colmns. Indeed, given a maximal 
$L\in {\bf SLP}_{2n}(k)$ we construct STD $T(L)$ of shape $(2^{Ep^+(L)},1^{2n-2(Ep^+(L)})$ by taking the right positive ends of $L$ as elements of $(T(L))_2$. We take $[j:j]$ as a vertical domino if $(i,j)\in L$ and as a horisontal domino if $(-j,j)\in L.$ We complete $T(L)$ by filling still empty cells of $T_1$ with vertical dominoes containing the left positive ends and $Fp^+(L)$ in increasing from top to bottom order. Obviously, $L_{T(L)}=L.$ 

\subsection{A dense $\B_{2n}-$orbit in an orbital variety}\label{a4} Recall from \ref{1.4} that  $L\in {\bf SLP}_{2n}$ defines  
$X_L=\sum\limits_{\alpha\in L}X_\alpha$.
For $L_T$ constructed in \ref{a3} put $X_T:=X_{L_T}$ and let $\mathcal B_T$ be its $\B_{2n}-$orbit.
The connection between $w_T$ defined in \ref{a2} and $L_T$ defined
in \ref{a3} is as follows:
\begin{prop} Let $T$ be a SDT of shape $(2^k,1^{2n-2k})$. Then $\mathcal B_T$ is dense in ${\mathcal V}_T.$
\end{prop}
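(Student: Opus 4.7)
The plan is to prove density by establishing the containment $\mathcal{B}_T \subset \mathcal{V}_T$ and then matching dimensions. Since $L_T$ is maximal by the construction in \ref{a3} (no crossings, no bridges over fixed points), section \ref{4.2} gives $\dim\mathcal{B}_T = d(k,n)$, which by Spaltenstein's equidimensionality theorem coincides with $\dim\mathcal{V}_T$. Once the containment is in hand, irreducibility of $\mathcal{V}_T$ then forces $\overline{\mathcal{B}_T} = \mathcal{V}_T$. The bulk of the work is thus to produce $X_T$ inside $\mathcal{V}_T$, and the most natural route is to show directly that $X_T$ lies in the Steinberg defining set $\mathfrak{n}_{2n} \cap w_T(\mathfrak{n}_{2n})$.

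To carry this out I would inspect each arc $\alpha \in L_T$ and verify that $X_\alpha$ satisfies the membership criterion listed at the end of \ref{a2}. A priori the arcs of $L_T$ are of three forms: (i) paired arcs $(i,j),(-j,-i)$ with $0<i<j$, for which $X_{e_j - e_i}$ must satisfy $j\in T_2$ and $i\in T_1$; (ii) self-symmetric arcs $(-i,i)$, for which $X_{2e_i}$ must satisfy $i\in T_2$; and (iii) cross-pairs $(-i,j),(i,-j)$ with $0<i<j$, for which $X_{e_j + e_i}$ must satisfy $i,j\in T_2$. Case (ii) is the content of the characterization of self-symmetric arcs in \ref{a3}, and in case (i) the right endpoint $j=b_r$ belongs to $T_2$ by construction, so the remaining task is to see that any positive left endpoint $l_r$ produced by the algorithm belongs to $T_1$. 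This follows because $l_r$ is distinct from every $b_s$ (earlier $b_s$'s are used hence not free at step $r$, and later $b_s$'s exceed $b_r > l_r$), so $l_r \notin T_2$, and since every integer $1,\dots,n$ labels a unique domino we have $T_1 \sqcup T_2 = \{1,\dots,n\}$, forcing $l_r \in T_1$. The decisive point is to rule out case (iii) entirely: I would show by induction on the step index that the set of used vertices stays symmetric about $0$, so that as soon as all positives below $b_r$ are exhausted, the same is true of all negatives strictly greater than $-b_r$; since $b_r$ has not yet been used at the start of its own step, $-b_r$ remains free, and the algorithm selects it, producing a self-symmetric arc rather than a non-trivial straddling arc.

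With $X_T \in \mathfrak{n}_{2n} \cap w_T(\mathfrak{n}_{2n})$ established, the facts $X_T^2 = 0$ and $\Rank X_T = |L_T| = k$ place $X_T$ in the orbit $\mathcal{O}_{(2^k,1^{2n-2k})} = \mathcal{O}_{w_T}$, so $X_T$ sits in Steinberg's defining subset for $\mathcal{V}_T$; $\B_{2n}$-stability of $\mathcal{V}_T$ then gives $\mathcal{B}_T \subset \mathcal{V}_T$, and the dimension comparison from the first paragraph closes the argument. I expect the main obstacle to be the symmetry argument ruling out case (iii) --- in particular the verification that $-b_r$ is free at the start of step $r$; the remaining facts are routine bookkeeping or direct appeals to Spaltenstein's dimension formula and to \ref{4.2}.
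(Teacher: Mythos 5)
Your proposal is correct and follows essentially the same route as the paper: reduce to the dimension equality $\dim\mathcal B_T=d(k,n)=\dim\mathcal V_T$ (via maximality of $L_T$ and Spaltenstein) and then verify $X_T\in\nil_{2n}\cap w_T(\nil_{2n})$ arc by arc using the membership criteria of \ref{a2}. The only difference is that you spell out details the paper leaves implicit, namely that the positive left endpoints produced by the algorithm of \ref{a3} lie in $T_1$ and that no arcs of type $(-i,j)$ with $0<i<j$ can occur, both of which you justify correctly.
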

\begin{proof}
As we have explained in \ref{a1} $\dim\mathcal B_T=\dim {\mathcal V}_T$ so that it is enough to check that
$$X_T\in \nil_{2n}\cap w_T(\nil_{2n}).$$
By computations in \ref{a2}
 $X_{e_j-e_i}\in \nil_{2n}\cap w_T(\nil_{2n})$ for any $j\in T_2$ and $i\in T_1$ such that $i<j$. 
This is in particular true for any $X_\alpha$ in $X_T$ of this type.

By the same computations $X_{2e_j}\in \nil_{2n}\cap w_T(\nil_{2n})$ for any $j\in T_2$. 
This is in particular true for a horizontal dominoes of $T$, so that it is true for any $X_\alpha$ in $X_T$ of this type.

Thus it is true for $X_T$ which is a sum of such root vectors.
\end{proof}

\subsection{Orbital varieties closures}\label{a5} Now we can apply the results of \ref{5.2} to the maximal link patterns in order to obtain the results on the closures of link patterns. 
Given $T$ of shape $(2^k,1^{2n-2k})$ and let $h_1<\ldots<h_p$ be the entries of its horizonal dominoes (put $h_p=0$ if there are no horizontal dominoes), 
$a_1<\ldots<a_q$ the entries of the vertical dominoes of the first column and $b_1<\ldots<b_r$ the entries of the vertical dominoes of the second column so that $k=2r+p$ and $2q+p=2n-k$. 
%Let $T_1=\{a_1,\ldots,a_q\}$ and $T_2=\{b_1,\ldots,b_r\}\cup\{h_1,\ldots h_p\}$. 
For each $b_s$ let $i_s$ be the corresponding element of the first column such that $(i_s,b_s)\in L_T$. 
Recall that for $h_s$ one has $(-h_s,h_s)\in L_T.$
%Put $L^+(T)=\{i\ :\ i\in \left\langle T_1\right\rangle,\ i+1\in \left\langle T_2\right\rangle\}$. Put $L^-(T)=\{-1\}$ if $h_1=1$ and $L^-(T)=\emptyset$ otherwise. 
%Let $L(T)=L^+(T)\cup L^-(T).$ For example $L(T)=\{1,6\}$ for $T$ from \ref{a2}.

Recall $E'(L)$ from \ref 5.2. Translating \cite[4.3]{Me2} to the language of SDT we get 
$E'(L_T)=\{(-h_p,h_p)\ :\ h_p\ne 0\}\cup \{(i_l,b_l)\ :\ b_l>h_p\ {\rm and}\ (l=r\ {\rm or}\ b_s-b_l\geq 2(s-l)\ \forall s\ l<s\leq r\}$.

For $T$ of shape $(2^k,1^{2n-2k})$ and for $b_s> h_p$ let $T\langle b_s\rangle$ be SDT obtained 
from $T$ by moving the vertical domino $[b_s:b_s]$ from the second column to the first column. 
$T\langle b_s\rangle$ is of shape $(2^{k-2},1^{2n-2k+4}).$ 
If $h_p\ne 0$ let $T\langle h_p\rangle$ be SDT obtained from $T$ by putting domino $[h_p:h_p]$ vertically into the first column. 
$T\langle h_p\rangle$ is of shape $(2^{k-1},1^{2n-2k+2})$

For the example from \ref{a2} one has $E'(L_T)=\{(-4,4),\ (6,7)\}$ and
$$T\langle 4\rangle=\vcenter{\halign{& \hfill#\hfill\cr
 \ssod\multispan{5}{\hrulefill}\cr
 \ssod\vb &\ \, 1\   &\vb &\  2\ \,  &\vb\cr
 \ssa\multispan{4}{\hrulefill}\cr
 \ssal\vbs & &\, 3 & & \vbs\cr
 \ssal\multispan{4}{\hrulefill}\cr
 \ssod\vb &\ \,  4 &\vb &\  7\ \,& \vb\cr
 \ssa\multispan{4}{\hrulefill}\cr
 \ssa\vb &\ \, 5\  &\vb\cr
 \vsa\multispan{3}{\hrulefill}\cr
 \ssa\vb &\ \, 6\   &\vb\cr
 \ssa\multispan{3}{\hrulefill}\cr
 }}\quad{\rm and}\quad
 T\langle 7\rangle=\vcenter{\halign{& \hfill#\hfill\cr
 \ssod\multispan{4}{\hrulefill}\cr
 \ssod\vb &\ \, 1 &\vb &\  2\ \, &\vb\cr
 \ssa\multispan{4}{\hrulefill}\cr
 \ssal\vbs & &\, 3 & & \vbs\cr
 \vsal\multispan{4}{\hrulefill}\cr
 \ssa\vbs & & 4 & & \vbs\cr
 \ssal\multispan{4}{\hrulefill}\cr
 \ssod\vb &\ \ 5\  &\vb\cr
 \ssal\multispan{3}{\hrulefill}\cr
 \vb &\ \ 6\ &\vb\cr
 \ssa\multispan{3}{\hrulefill}\cr
 \vb &\ \ 7\ &\vb\cr
 \ssa\multispan{3}{\hrulefill}\cr
 }}
 $$
Applying \ref{5.2} to the maximal link patterns and translating this into the language of SDT's we get:
\begin{thm}
Let $T$ be of shape $(2^k,1^{2n-2k})$ and let $L:=L_T.$
One has
 $$\overline{\mathcal V}_T\cap{\mathcal O}_{(2^{k-1},1^{2n-2k+2})}=
				\left\{\begin{array}{ll}\bigcup\limits_{(i_l,b_l)\in E'(L_T)}\overline{\mathcal B}_{L^-_{(i_l,b_l)}(-b_l,b_l)}\cap{\mathcal O}_{(2^{k-1},1^{2n-2k+2})}&{\rm if}\ h_p=0;\\
  \mathcal{V}_{T\langle h_p\rangle}\bigcup\bigcup\limits_{(i_l,b_l)\in E'(L_T)}\overline {\mathcal B}_{L^-_{(i_l,b_l)}(-b_l,b_l)}\cap{\mathcal O}_{(2^{k-1},1^{2n-2k+2})}&{\rm otherwise};\\
  \end{array}
  \right.$$
In particular this intersection is irreducible iff $|E'(L_T)|=1$ and contains an orbital variety iff $h_p\ne 0.$ 
Moreover, this intersection is an orbital variety iff $b_r<h_p.$

One also has
$${\mathcal V}_T\cap{\mathcal O}_{(2^{k-2},1^{2n-2k+4})}=\left\{\begin{array}{ll}\bigcup\limits_{(i_l,b_l)\in E'(L_T)}\mathcal V_{T\langle b_l\rangle}&{\rm if}\ h_p=0;\\
 \bigcup\limits_{(i_l,b_l)\in E'(L_T)}\mathcal V_{T\langle b_l\rangle}\cup (\overline{\mathcal V}_{T\langle h_p\rangle}\cap{\mathcal O}_{(2^{k-2},1^{2n-2k+4})})&{\rm otherwise};\\
 \end{array}\right.$$
Moreover, this intersection is a union of orbital varieties iff either $h_p=0$ or 
$E'(L_T)=\{(-h_p,h_p)\}$ and $E'(L_{T\langle h_p\rangle})=\{(-h_p+1,h_p-1)\}$, that is if either there are no horizontal dominoes in SDT or both 
%$h_p$ and $h_p-1$ 
$[h_p:h_p]$ and $[h_p-1:h_p-1]$ 
are horizontal dominoes such that $b_q<h_p-1.$
\end{thm}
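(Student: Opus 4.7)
The plan is to combine two results: the identification $\overline{\mathcal V}_T = \overline{\mathcal B}_{L_T}$ from the proposition in \ref{a4}, and the closure theorem of Section \ref{5}. Setting $L := L_T$, together they give
$$\overline{\mathcal V}_T \cap \mathcal O_{(2^{k-i},1^{2n-2k+2i})} = \bigsqcup\limits_{\genfrac{}{}{0pt}{}{L'\preceq L}{l(L')=k-i}} \mathcal B_{L'},$$
so the irreducible components of this intersection are the closures $\overline{\mathcal B}_{L'}$ where $L'$ ranges over the maximal elements (with respect to $\preceq$) of $\{L' \preceq L : l(L') = k-i\}$. The containment $\overline{\mathcal B}_{L'} \subseteq \overline{\mathcal B}_{L''}$ is governed by the same closure theorem: it holds iff $L' \preceq L''$.

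For the first formula ($i=1$), I would identify these maximal $L'$ with the set $N(L)$ of Proposition \ref{5.2}: by the very definition of $N(L)$, and because all its elements have exactly $l(L) - 1 = k - 1$ arcs, they give precisely the sought maxima. The proposition parametrizes $N(L)$ by $E'(L)$, and the characterization of $E'(L)$ carried over from \cite{Me2} into SDT language gives the list of contributing arcs. The symmetric external arc $(-h_p, h_p)$ (present iff $h_p \neq 0$) contributes $L^-_{(-h_p, h_p)}$, which is itself a maximal link pattern; a direct check against the algorithm of \ref{a3} shows $L^-_{(-h_p, h_p)} = L_{T\langle h_p\rangle}$, so its closure equals the orbital variety $\mathcal V_{T\langle h_p\rangle}$. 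Each non-symmetric $(i_l, b_l) \in E'(L)$ (so $b_l > h_p$) contributes $L' = L^-_{(i_l, b_l)(-i_l, -b_l)}(-b_l, b_l)$, which is generally non-maximal because the now-freed point $i_l$ becomes a fixed point beneath the new arc $(-b_l, b_l)$, so $\overline{\mathcal B}_{L'}$ is not an orbital variety. Consequently the intersection is irreducible iff $|E'(L)| = 1$; it contains an orbital variety iff the symmetric arc is present, iff $h_p \neq 0$; and it is itself an orbital variety iff $E'(L) = \{(-h_p, h_p)\}$, equivalently $b_r < h_p$, since $b_l < h_p$ combined with the absence of crossings in the maximal $L$ forces $(i_l, b_l)$ to lie inside $(-h_p, h_p)$ and hence fail to be external.

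For the second formula ($i=2$), the analogous maximal $L' \preceq L$ with $l(L') = k - 2$ split into two families. For each non-symmetric $(i_l, b_l) \in E'(L)$, removing both $(i_l, b_l)$ and $(-i_l, -b_l)$ from $L$ produces a maximal link pattern which I will verify equals $L_{T\langle b_l\rangle}$, giving the orbital variety $\mathcal V_{T\langle b_l\rangle}$. When $h_p \neq 0$, the additional family comes from descending through $(-h_p, h_p)$ into $L_{T\langle h_p\rangle}$ and then one further step; the union of all such components is precisely $\overline{\mathcal V}_{T\langle h_p\rangle} \cap \mathcal O_{(2^{k-2}, 1^{2n-2k+4})}$, since any $L' \preceq L_{T\langle h_p\rangle}$ with $l(L') = k-2$ automatically satisfies $L' \preceq L$. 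The main obstacle is ruling out additional maximal $L'$ that might arise from a dimension-preserving move of Section \ref{5.3} (an element of $D(L)$) followed by an arc removal; I plan to handle this by direct comparison of rank matrices $R_{L'}$ against the listed candidates, showing every such $L'$ is dominated by one of them. Finally, the criterion for the entire intersection to be a union of orbital varieties follows by applying the first formula recursively to $T\langle h_p\rangle$: since the $\mathcal V_{T\langle b_l\rangle}$ are automatically orbital varieties, the question reduces to when the extra term $\overline{\mathcal V}_{T\langle h_p\rangle} \cap \mathcal O_{(2^{k-2}, \ldots)}$ itself decomposes into orbital varieties, which by the codimension-one analysis requires $E'(L) = \{(-h_p, h_p)\}$ together with $E'(L_{T\langle h_p\rangle}) = \{(-h_{p-1}, h_{p-1})\}$, equivalently $[h_p:h_p]$ and $[h_p-1:h_p-1]$ both horizontal with $b_r < h_p - 1$.
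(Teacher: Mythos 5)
Your proposal is correct and follows essentially the same route as the paper: identify $\overline{\mathcal V}_T$ with $\overline{\mathcal B}_{L_T}$ via \ref{a4}, apply the closure theorem of Section \ref{5} together with the description of $N(L)$ and $E'(L)$ from \ref{5.2}, and translate into SDT language (the paper's own proof is exactly this one-line reduction). The verifications you defer — that $L^-_{(i_l,b_l)(-i_l,-b_l)}=L_{T\langle b_l\rangle}$ and that no extra maximal $k-2$-arc elements arise through $D(L)$ — are precisely the details the paper also leaves to the reader, and your proposed rank-matrix comparison is the right way to settle them.
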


This means in particular, that all the intersections $\mathcal V_T\cap{\mathcal O}_{(2^{k-m},1^{2n-2k+2m})}$ 
are irreducible and consists of orbital varieties only iff $T_2=\{h_1,\ldots,h_p\}$ that is if $\mathcal V_T$ belongs to an abelian nilradical.

This also means that if $h_p=0,$ that is if $\mathcal V_T$ is induced from the corresponding orbital variety in $\mathfrak{sl}_n$ 
then all the intersections with ${\mathcal O}_{(2^{k-2m},1^{2n-2k+4m})}$ are a union of orbital varieties. 
And all the intersections with ${\mathcal O}_{(2^{k-2m+1},1^{2n-2k+4m-2})}$
do not contain orbital varieties and are non-equidimensional in general.

In all other cases the intersections are non-equidimensional and contain both orbital varieties and varieties of smaller dimensions.

In our example:
$$\begin{array}{ll}\overline{\mathcal V}_T\cap\mathcal O_{(2^5,1^4)}&=\mathcal{V}_{T\langle 4\rangle}\cup(\overline{\mathcal B}_{X_{e_2-e_1}+X_{2e_3}+X_{2e_4}+X_{2e_7}}\cap\mathcal O_{(2^5,1^4)};\\
\overline{\mathcal V}_T\cap\mathcal O_{(2^4,1^6)}&=\mathcal{V}_{T\langle 7\rangle}\cup\mathcal{V}_{T\langle 3,4 \rangle}\cup(\overline{\mathcal B}_{X_{e_2-e_1}+X_{2e_3}+X_{2e_7}}\cap\mathcal O_{(2^4,1^6)};\\
\overline{\mathcal V}_T\cap\mathcal O_{(2^3,1^8)}&=\mathcal V_{T\langle 4,7\rangle}\cup((\overline{\mathcal B}_{X_{e_2-e_1}+X_{2e_7}}\cup\overline{\mathcal B}_{X_{2e_2}+X_{e7-e_6}})\cap \mathcal O_{(2^3,1^8)});\\
\overline{\mathcal V}_T\cap\mathcal O_{(2^2,1^{10})}&=\mathcal V_{T\langle 3,4,7\rangle}\cup (\overline{\mathcal B}_{X_{2e_2}+X_{2e_7}}\cap\mathcal O_{(2^2,1^{10})})\\
\overline{\mathcal V}_T\cap\mathcal O_{(2,1^{12})}&=\overline{\mathcal B}_{X_{2e_2}}\cap\mathcal O_{(2,1^{12})}\\
\end{array}$$

\subsection{Orbital varieties intersections of codimension 1}\label{a6} Another question is to describe pairs of orbital varieties $\mathcal V_T,\mathcal V_S$ in
$\mathcal{O}_{ (2^k,1^{2n-2k})}\cap\nil_{2n}$ such that
$$\codim_{\mathcal{O}_{(2^k,1^{2n-2k})}\cap\nil_{2n}}\mathcal V_T\cap \mathcal V_S=1.$$

Unlike the case of the closure, here the picture is very similar to the picture in ${\mathfrak{sl}}_n$.
Exactly as there we consider all $\B_{2n}-$orbits $\mathcal B_L$ in $\mathcal O_{ (2^k,1^{2n-2k})}\cap\nil_{2n}$ of codimension 1 
in it and show that for any such $\mathcal B_L$ but one there are exactly two maximal $\B_{2n}-$orbit closures containing it 
and that the intersection of these two closures is exactly $\overline{\mathcal B}_L.$ And the only remaining $\mathcal B_L$ belongs to a unique maximal $\B_{2n}-$orbit closure.

One has $\codim_{{\mathcal O}_{ (2^k,1^{2n-2k})}\cap\nil_{2n}}\mathcal B_L=1$ in one of the following cases. 
In each case $L_0$ denotes the "maximal" part of $L$, so that $b(L_0)=c(L_0)=0$. 
We also illustrate each case with link patterns, where the part corresponding to $L_0$ is denoted by dots.
\begin{itemize}
\item[(i)] There are two symmetric fixed points either both under one bridge or under two separate bridges. 
Namely, $L=L_0(-j,j),$ or $L=L_0(j,k)(-j,-k)$. 
In both cases $c(L)=0$ and $b(L)=1$.\\
(a) The two points $-i,i$ are under the central arc $(-j,j)$. 
In this case the only maximal $B-$orbit with the closure containing $\mathcal B_L$ is $\mathcal B_{L'}$
where $L'=L_0(-i,i)$;\\
\begin{center}
\begin{picture}(100,70)
\put(-160,40){$L=$}
\multiput(-131.5,40)(7.5,0){21}%
{\circle*{1}}
 \put(-110,40){\circle*{3}}
 \put(-5,40){\circle*{3}}
 \put(-80,40){\circle{3}}
 \put(-35,40){\circle{3}}
  \put(-125,30){$-j$}
 \put(-92,30){$-i$}
 \multiput(-57.5,20)(0,4){16}{\line(0,1){2}}
\put(-35,30){$i$}
\put(-5,30){$j$}

 \qbezier(-110,40)(-58.5,100)(-5,40)

 \put(30,45){\vector(2,0){20}}
\put(60,40){$L'=$}

\multiput(93.5,40)(7.5,0){21}%
{\circle*{1}}
 \put(115,40){\circle{3}}
 \put(145,40){\circle*{3}}
 \put(190,40){\circle*{3}}
 \multiput(167.5,20)(0,4){16}{\line(0,1){2}}
 \put(220,40){\circle{3}}
 \put(100,30){$-j$}
 \put(133,30){$-i$}
 \put(188,30){$i$}
\put(218,30){$j$}
 \qbezier(145,40)(167.5,80)(190,40)
\end{picture}
\end{center}
(b) The two points $-i,i$ are under arcs $(-k,-j),\ (j,k)$ respectively. 
Note that since the codimension is one, one has $0<j<k$. 
The two maximal $\B-$orbits with the closures containing $\mathcal B_L$ are $\mathcal B_{L_1}$ where $L_1=L_0(i,k)(-i,-k)$ and 
$\mathcal B_{L_2}$ where $L_2=L_0(j,i)(-j,-i)$. 
Note that considering link patterns as link patterns in $\mathfrak{sl}_{2n}$ we get that 
$\overline{\mathfrak B}(Y_L)=\overline{\mathfrak B}(Y_{L_1})\cap\overline{\mathfrak B}(Y_{L_2})$ so that even in $\mathfrak{sl}_{2n}$ 
the corresponding intersection is irreducible (and of codimension 2). Thus, the intersection in $\mathfrak{sp}_{2n}$ is also irreducible.
\begin{center}
\begin{picture}(100,70)
\put(-160,40){$L=$}
\multiput(-131.5,40)(7.5,0){21}%
{\circle*{1}}
 \put(-110,40){\circle*{3}}
 \put(-95,40){\circle{3}}
 \put (-70,40){\circle*{3}}
 \put(-125,30){$-k$}
 \put(-105,30){$-i$}
 \put (-85,30){$-j$}
 \multiput(-57.5,20)(0,4){16}{\line(0,1){2}}
 \put (-45,40){\circle*{3}}
 \put(-20,40){\circle{3}}
 \put(-5,40){\circle*{3}}
 \put(-50,30){$j$}
\put(-22,30){$i$}
\put(-7,30){$k$}
 \qbezier(-110,40)(-90,75)(-70,40)
\qbezier(-45,40)(-25,75)(-5,40)

 \put(30,45){\vector(2,1){20}}
\put(60,55){$L_1=$}

\multiput(93.5,55)(7.5,0){21}%
{\circle*{1}}
 \put(115,55){\circle*{3}}
 \put(130,55){\circle*{3}}
 \put(155,55){\circle{3}}
 \put(100,45){$-k$}
 \put(120,45){$-i$}
 \put(142,45){$-j$}
 \multiput(167.5,5)(0,4){22}{\line(0,1){2}}
 \put(180,55){\circle{3}}
 \put(205,55){\circle*{3}}
 \put(220,55){\circle*{3}}
 \put(177,45){$j$}
  \put(202,45){$i$}
 \put(219,45){$k$}

 \qbezier(115,55)(122.5,77)(130,55)
\qbezier(205,55)(212.5,77)(220,55)

 \put(30,35){\vector(2,-1){20}}
 \put(60,15){$L_2=$}

 \multiput(93.5,15)(7.5,0){21}%
{\circle*{1}}
\put(115,15){\circle{3}}
 \put(130,15){\circle*{3}}
 \put(155,15){\circle*{3}}
 \put(100,5){$-k$}
 \put(120,5){$-i$}
 \put(142,5){$-j$}
 %\multiput(167.5,5)(0,4){22}{\line(0,1){2}}
 \put(180,15){\circle*{3}}
 \put(205,15){\circle*{3}}
 \put(220,15){\circle{3}}
 \put(177,5){$j$}
  \put(202,5){$i$}
 \put(219,5){$k$}

 \qbezier(130,15)(142.5,45)(155,15)
\qbezier(180,15)(192.5,45)(205,15)
\end{picture}
\end{center}

\item[(ii)] There is a cross in the center, that is $L=L_0(i,-j)(-i,j)$ 
and $b(L)=0,$ $c(L)=1$. Let $L_1=L_0(i,j)(-i,-j)$ and $L_2=L_0(-i,i)(-j,j)$. 
Note again that considering link patterns as link patterns in $\mathfrak{sl}_{2n}$ we get that 
$\overline{\mathfrak B}(Y_L) = \overline{\mathfrak B}(Y_{L_1})\cap\overline{\mathfrak B}(Y_{L_2})$ so that even in $\mathfrak{sl}_{2n}$ 
the corresponding intersection is of codimension 1 and irreducible. Thus, the intersection is irreducible.

    \begin{center}
\begin{picture}(100,90)
\put(-165,35){$L=$}
\multiput(-133,35)(7.5,0){3}%
{\circle*{1}}
 \put(-110,35){\circle*{3}}
 \put(-123,25){$-j$}
 \multiput(-104,35)(7.5,0){3}%
{\circle*{1}}
 \put(-90,35){\circle*{3}}
 \put(-103,25){$-i$}
 \multiput(-71.5,20)(0,4){13}{\line(0,1){2}}
\multiput(-82.5,35)(7.5,0){4}%
{\circle*{1}}
 \put(-53,35){\circle*{3}}
 \put(-56,25){$i$}
 \put(-33,35){\circle*{3}}
 \multiput(-47,35)(7.5,0){3}%
{\circle*{1}}
 \put(-36,25){$j$}
\multiput(-25,35)(7.5,0){3}%
{\circle*{1}}
 \qbezier(-110,35)(-82.5,75)(-53,35)
\qbezier(-90,35)(-62.5,75)(-33,35)

 \put(20,40){\vector(2,1){20}}
 \put(55,50){$L_1=$}

\multiput(87,50)(7.5,0){3}%
{\circle*{1}}
 \put(110,50){\circle*{3}}
 \put(97,40){$-j$}
 \put(130,50){\circle*{3}}
 \put(117,40){$-i$}
\multiput(116,50)(7.5,0){2}%
{\circle*{1}}
\multiput(137.5,50)(7.5,0){4}%
{\circle*{1}}
 \put(167,50){\circle*{3}}
 \put(162,40){$i$}
 \put(187,50){\circle*{3}}
 \put(182,40){$j$}
 \multiput(173,50)(7.5,0){2}%
{\circle*{1}}
\multiput(195,50)(7.5,0){3}%
{\circle*{1}}
 \qbezier(110,50)(120,75)(130,50)
\qbezier(167,50)(177,75)(187,50)

 \put(20,30){\vector(2,-1){20}}
\put(60,10){$L_2=$}
\multiput(87,10)(7.5,0){3}%
{\circle*{1}}
 \put(110,10){\circle*{3}}
 \put(97,0){$-j$}
 \put(130,10){\circle*{3}}
 \put(117,0){$-i$}
 \multiput(116,10)(7.5,0){2}%
{\circle*{1}}
\multiput(137.5,10)(7.5,0){4}%
{\circle*{1}}
\multiput(148.5,5)(0,4){18}{\line(0,1){2}}
 \put(167,10){\circle*{3}}
 \put(162,0){$i$}
 \put(187,10){\circle*{3}}
 \put(182,0){$j$}
 \multiput(173,10)(7.5,0){2}%
{\circle*{1}}
\multiput(195,10)(7.5,0){3}%
{\circle*{1}}
 \qbezier(110,10)(148.5,60)(187,10)
\qbezier(130,10)(148.5,40)(167,10)
\end{picture}
\end{center}

\item[(iii)] There are two symmetric crosses so that either\\ $L=L_0(i,k)(-i,-k)(j,l)(-j,-l)$ or $L=L_0(-j,j)(i,k)(-i,-k)$ where $i<j<k$, that is $b(L)=0$ and $c(L)=1.$ 
As in (i) we consider them separately:\\
(a) $L=L_0(i,k)(-i,-k)(j,l)(-j,-l)$ that is both roots are induced form $\mathfrak{sl}_n.$ 
In this case there are two maximal  $\B_{2n}-$orbits with the closures containing $\mathcal B_L$ namely
$\mathcal B_{L_1}$ where $L_1=L_0(i,l)(-i,-l)(j,k)(-j,-k)$ and   $\mathcal B_{L_2}$ where $L_2=L_0(i,j)(-i,-j)(k,l)(-k,-l)$. 
Exactly as in case (i-b), considering link patterns as link patterns in $\mathfrak{sl}_{2n}$ 
we get that $\overline{\mathfrak B}(Y_L)=\overline{\mathfrak B}(Y_{L_1})\cap\overline{\mathfrak B}(Y_{L_2})$ so that even in $\mathfrak{sl}_{2n}$ 
the corresponding intersection is of codimension 2 and irreducible. Thus, the intersection is irreducible.\\

\begin{center}
\begin{picture}(100,70)
\put(-160,40){$L=$}
\multiput(-131.5,40)(7.5,0){21}%
{\circle*{1}}
 \put(-120,40){\circle*{3}}
 \put(-100,40){\circle*{3}}
 \put(-80,40){\circle*{3}}
 \put (-65,40){\circle*{3}}
 \put(-132,27){$-l$}
 \put(-112,27){$-k$}
 \put (-94,27){$-j$}
 \put (-75,27){$-i$}
 \qbezier(-120,40)(-100,75)(-80,40)
 \qbezier(-100,40)(-82.5,72)(-65,40)
 \multiput(-57.5,20)(0,4){16}{\line(0,1){2}}
 \put (-50,40){\circle*{3}}
 \put(-35,40){\circle*{3}}
 \put(-15,40){\circle*{3}}
 \put(5,40){\circle*{3}}
 \put(-55,27){$i$}
\put(-40,27){$j$}
\put(-17,27){$k$}
\put(2,27){$l$}
 \qbezier(-50,40)(-32.5,72)(-15,40)
 \qbezier(-35,40)(-15,75)(5,40)
%\qbezier(-45,40)(-25,75)(-5,40)

 \put(30,45){\vector(2,1){20}}
\put(60,55){$L_1=$}

\multiput(93.5,55)(7.5,0){21}%
{\circle*{1}}
\put(105,55){\circle*{3}}
 \put(125,55){\circle*{3}}
 \put(145,55){\circle*{3}}
 \put(160,55){\circle*{3}}
 \put(92,42){$-l$}
 \put(112,42){$-k$}
 \put(132,42){$-j$}
 \put(147,42){$-i$}
 \qbezier(105,55)(132.5,95)(160,55)
 \qbezier(125,55)(135,75)(145,55)
 \multiput(167.5,5)(0,4){22}{\line(0,1){2}}
 \put(175,55){\circle*{3}}
 \put(190,55){\circle*{3}}
 \put(210,55){\circle*{3}}
 \put(230,55){\circle*{3}}
 \put(172,42){$i$}
  \put(187,42){$j$}
 \put(207,42){$k$}
\put(227,42){$l$}

\qbezier(175,55)(202.5,95)(230,55)
\qbezier(190,55)(200,75)(210,55)

 \put(30,35){\vector(2,-1){20}}
 \put(60,15){$L_2=$}

 \multiput(93.5,15)(7.5,0){21}%
{\circle*{1}}
\put(105,15){\circle*{3}}
 \put(125,15){\circle*{3}}
 \put(145,15){\circle*{3}}
 \put(160,15){\circle*{3}}
 \put(92,2){$-l$}
 \put(112,2){$-k$}
 \put(132,2){$-j$}
 \put(147,2){$-i$}
 \qbezier(105,15)(115,40)(125,15)
\qbezier(145,15)(152.5,35)(160,15)
 %\multiput(167.5,5)(0,4){22}{\line(0,1){2}}
 \put(175,15){\circle*{3}}
 \put(190,15){\circle*{3}}
 \put(210,15){\circle*{3}}
 \put(230,15){\circle*{3}}
 \put(172,2){$i$}
  \put(187,2){$j$}
 \put(207,2){$k$}
\put(227,2){$l$}

 \qbezier(175,15)(182.5,35)(190,15)
\qbezier(210,15)(220,40)(230,15)
\end{picture}
\end{center}
(b) $L=L_0(-j,j)(i,k)(-i,-k)$. In this case there are two maximal $\B_{2n}-$orbits with the closures containing $\mathcal B_L$ 
namely $\mathcal B_{L_1}$ where $L_1=L_0(-i,i)(j,k)(-j,-k)$ and $\mathcal B_{L_2}$ where $L_2=L_0(i,j)(-i,-j)(-k,k)$. 
In this case $\overline{\mathfrak B}(Y_{L_1})\cap\overline{\mathfrak B}(Y_{L_2})=\overline{\mathfrak B}(Y_L)\cup\overline{\mathfrak B}(Y_{L'})\cup\overline{\mathfrak B}(Y_{L''})$ 
where $L'=L_0(-k,i)(-j,k)(-i,j)$ and $L''=L_0(-k,j)(-j,i)(-i,k)$. 
Note that all 3 components are of codimension 2 but only $\mathfrak B(Y_L)$ is symmetric. 
Moreover the maximal symmetric $L_S'\prec L'$ is $L_S'=L_0(i,-k)(-i,k)(-j,j)$ 
and the maximal symmetric element  $L_S''\prec L''$ is also $L_S'.$ 
Further note that $L_S'=L_S'' \prec L$.  
Thus, again the intersection $\overline{\mathcal B}_{L_1}\cap\overline{\mathcal B}_{L_2}=\overline{\mathcal B}_L$.\\
\begin{center}
\begin{picture}(100,70)
\put(-160,40){$L=$}
\multiput(-131.5,40)(7.5,0){21}%
{\circle*{1}}
 \put(-110,40){\circle*{3}}
 \put(-95,40){\circle*{3}}
 \put (-70,40){\circle*{3}}
 \put(-125,30){$-k$}
 \put(-105,30){$-j$}
 \put (-85,30){$-i$}
 \multiput(-57.5,20)(0,4){16}{\line(0,1){2}}
 \put (-45,40){\circle*{3}}
 \put(-20,40){\circle*{3}}
 \put(-5,40){\circle*{3}}
 \put(-50,30){$i$}
\put(-22,30){$j$}
\put(-7,30){$k$}
 \qbezier(-110,40)(-90,75)(-70,40)
 \qbezier(-95,40)(-58.5,95)(-22,40)

\qbezier(-45,40)(-25,75)(-5,40)

 \put(30,45){\vector(2,1){20}}
\put(60,55){$L_1=$}

\multiput(93.5,55)(7.5,0){21}%
{\circle*{1}}
 \put(115,55){\circle*{3}}
 \put(130,55){\circle*{3}}
 \put(155,55){\circle*{3}}
 \put(100,45){$-k$}
 \put(120,45){$-j$}
 \put(142,45){$-i$}
 \multiput(167.5,5)(0,4){22}{\line(0,1){2}}
 \put(180,55){\circle{3}}
 \put(205,55){\circle*{3}}
 \put(220,55){\circle*{3}}
 \put(177,45){$i$}
  \put(202,45){$j$}
 \put(219,45){$k$}

 \qbezier(115,55)(122.5,77)(130,55)
\qbezier(205,55)(212.5,77)(220,55)
\qbezier(155,55)(167.5,85)(180,55)

 \put(30,35){\vector(2,-1){20}}
 \put(60,15){$L_2=$}

 \multiput(93.5,15)(7.5,0){21}%
{\circle*{1}}
\put(115,15){\circle*{3}}
 \put(130,15){\circle*{3}}
 \put(155,15){\circle*{3}}
 \put(100,5){$-k$}
 \put(120,5){$-j$}
 \put(142,5){$-i$}
 %\multiput(167.5,5)(0,4){22}{\line(0,1){2}}
 \put(180,15){\circle*{3}}
 \put(205,15){\circle*{3}}
 \put(220,15){\circle*{3}}
 \put(177,5){$i$}
  \put(202,5){$j$}
 \put(219,5){$k$}

 \qbezier(130,15)(142.5,40)(155,15)
\qbezier(180,15)(192.5,40)(205,15)
\qbezier(115,15)(167.5,72)(220,15)
\end{picture}
\end{center}

The link patterns $L',\ L'',\ L_S'=L_S''$ are:
\begin{center}
\begin{picture}(100,80)(0,10)
\put(-160,40){$L'=$}
\multiput(-131.5,40)(7.5,0){21}%
{\circle*{1}}
 \put(-110,40){\circle*{3}}
 \put(-95,40){\circle*{3}}
 \put (-70,40){\circle*{3}}
 \put(-125,27){$-k$}
 \put(-105,27){$-j$}
 \put (-85,27){$-i$}
 \multiput(-57.5,20)(0,4){16}{\line(0,1){2}}
 \put (-45,40){\circle*{3}}
 \put(-20,40){\circle*{3}}
 \put(-5,40){\circle*{3}}
 \put(-50,27){$i$}
\put(-22,27){$j$}
\put(-7,27){$k$}
 \qbezier(-110,40)(-77.5,95)(-45,40)
 \qbezier(-95,40)(-50,95)(-5,40)
\qbezier(-70,40)(-45,75)(-20,40)

\put(60,40){$L''=$}

\multiput(93.5,40)(7.5,0){21}%
{\circle*{1}}
 \put(115,40){\circle*{3}}
 \put(130,40){\circle*{3}}
 \put(155,40){\circle*{3}}
 \put(100,27){$-k$}
 \put(120,27){$-j$}
 \put(142,27){$-i$}
 \multiput(167.5,20)(0,4){16}{\line(0,1){2}}
 \put(180,40){\circle*{3}}
 \put(205,40){\circle*{3}}
 \put(220,40){\circle*{3}}
 \put(177,27){$i$}
  \put(202,27){$j$}
 \put(219,27){$k$}

 \qbezier(115,40)(160,95)(205,40)
\qbezier(130,40)(155,80)(180,40)
\qbezier(155,40)(187.5,95)(220,40)

\end{picture}
\end{center}
\begin{center}
\begin{picture}(100,80)(-90,0)
\put(-190,40){$L_S'=L_S''=$}
\multiput(-131.5,40)(7.5,0){21}%
{\circle*{1}}
 \put(-110,40){\circle*{3}}
 \put(-95,40){\circle*{3}}
 \put (-70,40){\circle*{3}}
 \put(-125,27){$-k$}
 \put(-105,27){$-j$}
 \put (-85,27){$-i$}
 \multiput(-57.5,20)(0,4){16}{\line(0,1){2}}
 \put (-45,40){\circle*{3}}
 \put(-20,40){\circle*{3}}
 \put(-5,40){\circle*{3}}
 \put(-50,27){$i$}
\put(-22,27){$j$}
\put(-7,27){$k$}
 \qbezier(-110,40)(-77.5,95)(-45,40)
 \qbezier(-95,40)(-57.5,95)(-20,40)
\qbezier(-70,40)(-37.5,95)(-5,40)
\end{picture}
\end{center}
\end{itemize}

As a corollary we get that
\begin{thm} Let ${\mathcal V}_T,\ {\mathcal V}_S\subset {\mathcal O}_{(2^k,1^{2n-2k})}\cap\nil_{2n}$ be such that
${\rm codim}_{{\mathcal O}_{(2^k,1^{2n-2k})}\cap\nil_{2n}}{\mathcal V}_T\cap{\mathcal V}_S=1$  then ${\mathcal V}_T\cap{\mathcal V}_S$
 is irreducible.
\end{thm}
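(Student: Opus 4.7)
The plan is to deduce the theorem formally from the case-by-case computation of codimension-$1$ $\B_{2n}$-orbits in $\mathcal{O}_{(2^k,1^{2n-2k})}\cap\nil_{2n}$ just completed in \ref{a6}, combined with an elementary $\B_{2n}$-stability argument.

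First I would observe that $\mathcal{V}_T\cap\mathcal{V}_S$ is closed and $\B_{2n}$-stable. By the classification in \ref{4.1}, $\mathcal{O}_{(2^k,1^{2n-2k})}\cap\nil_{2n}=\bigsqcup_{L\in{\bf SLP}_{2n}(k)}\mathcal{B}_L$ is a disjoint union of finitely many orbits, so every closed $\B_{2n}$-stable subset is the union of the closures $\overline{\mathcal{B}}_L$ it contains. By the codimension hypothesis each irreducible component of $\mathcal{V}_T\cap\mathcal{V}_S$ therefore has the form $\overline{\mathcal{B}}_L$ for some $L\in{\bf SLP}_{2n}(k)$ with $\codim_{\mathcal{O}_{(2^k,1^{2n-2k})}\cap\nil_{2n}}\mathcal{B}_L=1$.

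Next I would appeal to the enumeration of such $L$ carried out in \ref{a6}. All codimension-$1$ orbits fall into cases (i-a), (i-b), (ii), (iii-a), (iii-b). The orbit of case (i-a) is contained in a unique maximal $\B_{2n}$-orbit closure, so if it were a component of $\mathcal{V}_T\cap\mathcal{V}_S$ we would be forced to have $T=S$, contradicting the hypothesis. In each of the remaining four cases, the orbit $\mathcal{B}_L$ sits inside exactly two maximal orbit closures $\overline{\mathcal{B}}_{L_1},\overline{\mathcal{B}}_{L_2}$, and the equality $\overline{\mathcal{B}}_L=\overline{\mathcal{B}}_{L_1}\cap\overline{\mathcal{B}}_{L_2}$ was already verified there. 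Consequently, if $\overline{\mathcal{B}}_L$ is an irreducible component of $\mathcal{V}_T\cap\mathcal{V}_S$, then $\{L_1,L_2\}=\{L_T,L_S\}$ and hence $\mathcal{V}_T\cap\mathcal{V}_S=\overline{\mathcal{B}}_{L_T}\cap\overline{\mathcal{B}}_{L_S}=\overline{\mathcal{B}}_L$, which is irreducible.

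The substantive work has already been done in \ref{a6}, so the body of the proof itself should occupy only a few lines. The most delicate ingredient on which it rests is the analysis of case (iii-b), where the analogous type-$A_{2n-1}$ intersection $\overline{\mathfrak{B}}(Y_{L_1})\cap\overline{\mathfrak{B}}(Y_{L_2})$ decomposes into three codimension-$2$ pieces $\overline{\mathfrak{B}}(Y_L),\overline{\mathfrak{B}}(Y_{L'}),\overline{\mathfrak{B}}(Y_{L''})$; one must observe that $L'$ and $L''$ share a common maximal symmetric minorant $L_S'=L_S''$ which lies strictly below $L$, so that no additional symmetric codimension-$1$ component appears in $\mathfrak{sp}_{2n}$ and only $\overline{\mathcal{B}}_L$ survives. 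Once this has been absorbed, the theorem follows by the formal argument above.
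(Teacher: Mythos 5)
Your proposal is correct and follows essentially the same route as the paper: the theorem is stated there precisely as a corollary of the preceding case-by-case analysis of codimension-one orbits (cases (i)--(iii)), and your formal gluing argument --- that a top-dimensional component must be $\overline{\mathcal B}_L$ for a codimension-one $L$ lying in exactly two maximal orbit closures, forcing $\{L_1,L_2\}=\{L_T,L_S\}$ and hence $\mathcal V_T\cap\mathcal V_S=\overline{\mathcal B}_{L_1}\cap\overline{\mathcal B}_{L_2}=\overline{\mathcal B}_L$ --- is exactly the deduction the paper leaves implicit, including the correct identification of case (iii-b) as the delicate step.
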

\begin{rem}
It is easy to give a combinatorial criteria for ${\mathcal V}_T,\ {\mathcal V}_S$ to intersect in codimension 1, it is very close to the criteria provided in
\cite[5.6]{Me2}, however it involves meanders, so we are not providing it here.
\end{rem}
%\newpage
\appendix
\section{ Combinatorics of link patterns: restriction to symmetric link patterns}
To construct $D(L)$ we consider $L^{\prime\prime}\in \widehat D(L)$ obtained by one of  elementary moves described in \ref{5.3}. We always assume that an arc and a fixed point (respectively a pair of arcs) involved in obtaining $L^{\prime\prime}$ satisfy the conditions of minimality.
In order to simplify the notation we set $L_0$ exactly as in \ref{5.3} to be a symmetric link pattern obtained from $L$ by removing the pairs of symmetric arcs
such that one arc of the pair was  involved (and removing a central arc if it was involved) in getting $L^{\prime\prime}$ .

All the non central arcs are written as $(\pm i,\pm j)$ where $0<i< j$. In the case of central arc we write $(-i,i)$ where $i>0.$

{\it $\rightleftharpoons$ Move of an end to a fixed point} (F).
We always choose a fixed point involved in getting $L^{\prime\prime}$ to be positive. Obviously, we always can make this choice by the symmetry.

\begin{itemize}
\item[(FC)] {\it The arc is central:}  The arc is $(-i,i)$ and $r_i>i$ is the corresponding fixed point.
Then $L^{\prime\prime}=L_{0}(-i, r_i)$ and $SR_{L^{\prime\prime}}=R_{L'}$ where $L'=L^-_{(-i,i)}(-r_i,r_i)$ and 
$\dim \mathcal B_{L'}=\dim\mathcal B_L-1\ \Rightarrow\ L'\in D(L).$ 
One has $X_{L'}=\lim\limits_{m\rightarrow\infty} T_i(\frac{1}{m}).U_{e_{r_i}-e_i}(1).X_L$.
\begin{center}
\begin{picture}(100,45)
\multiput(-135,10)(5,0){32}%
{\circle*{1}}
\put(-120,10){\circle*{3}}
 \put(-128,2){${}_{-r_i}$}
 \put(-80,10){\circle*{3}}
 \put(-86,2){${}_{-i}$}
 \put(-35,10){\circle*{3}}
 \put(-37,2){${}_{i}$}
 \put(5,10){\circle*{3}}
 \put(2,2){${}_{r_i}$}

 \qbezier(-80,10)(-57.5,35)(-35,10)
 \put(30,10){\vector(1,0){25}}
 \multiput(65,10)(5,0){32}
 {\circle*{1}}
\put(80,10){\circle*{3}}
\put(74,2){${}_{-r_i}$}
 \put(120,10){\circle*{3}}
 \put(113,2){${}_{-i}$}
 \put(165,10){\circle*{3}}
 \put(163,2){${}_{i}$}
 \put(205,10){\circle*{3}}
  \put(203,2){${}_{r_i}$}

 \qbezier(80,10)(142.5,75)(205,10)
\end{picture}
\end{center}

\item[(FP)]{\it The arc is in the positive part:}
The arc is $(i,j)$  and a fixed point is either $l_i$ or $r_j.$ These two cases are different.
\begin{itemize}
\item{(a)} Let a fixed point be $l_i$.
Then $L^{\prime\prime}=L^-_{(i,j)}(l_i,j)$ and $SR_{L^{\prime\prime}}=R_{L'}$ where $L'=L_0(l_i,j)(-l_i,-j)$ and $\dim \mathcal B_{L'}=
\dim\mathcal B_L-1\ \Rightarrow\ L'\in D(L).$\\ 
One has $X_{L'}=\lim\limits_{m\rightarrow\infty}T_i(m) . U_{e_{i}-e_{l_i}}(-1).X_L$.
\begin{center}
\begin{picture}(100,45)(0,-10)
%\put(-20,35){$L_\sigma=$}
\multiput(-135,10)(5,0){32}%
{\circle*{1}}
\put(-120,10){\circle*{3}}
 \put(-130,2){${}_{-j}$}
 \put(-100,10){\circle*{3}}
 \put(-108,2){${}_{-i}$}
 \put(-80,10){\circle*{3}}
 \put(-88,2){${}_{-l_i}$}
 \put(-35,10){\circle*{3}}
 \put(-37,2){${}_{l_i}$}
 \put(-15,10){\circle*{3}}
 \put(-17,2){${}_{i}$}
 \put(5,10){\circle*{3}}
 \put(3,2){${}_{j}$}

 \qbezier(-120,10)(-110,30)(-100,10)
 \qbezier(-15,10)(-5,30)(5,10)

 \put(30,10){\vector(1,0){25}}
 \multiput(65,10)(5,0){32}
 {\circle*{1}}
\put(80,10){\circle*{3}}
\put(72,2){${}_{-j}$}
\put(100,10){\circle*{3}}
\put(92,2){${}_{-i}$}
 \put(120,10){\circle*{3}}
 \put(112,2){${}_{-l_i}$}
 \put(165,10){\circle*{3}}
 \put(163,2){${}_{l_i}$}
 \put(185,10){\circle*{3}}
 \put(183,2){${}_{i}$}
 \put(205,10){\circle*{3}}
  \put(203,2){${}_{j}$}

 \qbezier(80,10)(100,40)(120,10)
  \qbezier(165,10)(185,40)(205,10)

\end{picture}
\end{center}

\item{(b)} Let fixed point be $r_j$. Then $L^{\prime\prime}=L^-_{(i,j)}(i,r_j)$. There is no $L'$ such that $SR_{L^{\prime\prime}}$ is $R_{L'}.$
Two maximal SLPs $\{L'_1,\, L'_2\}$ such that $R_{L'_s}\preceq SR_{L^{\prime\prime}}$  are $L'_1=L_0(i,r_j)(-i,-r_j)$ and $L'_2=L_0(-j,j)(-r_j,r_j)$. Note that  $L\succ L_0(i,-j)(-i,j)\succ L'_2\ \Rightarrow\ L'_2\not\in D(L)$ and $\dim \mathcal B_{L'_1}=
\dim\mathcal B_L-1\ \Rightarrow\ L'_1\in D(L)$. 
One has $X_{L_1'}=\lim\limits_{m\rightarrow\infty}T_j(\frac{1}{m}). U_{e_{r_j}-e_j}(1).X_L$.
 \begin{center}
\begin{picture}(100,45)(0,-10)
\multiput(-135,10)(5,0){32}%
{\circle*{1}}
\put(-120,10){\circle*{3}}
 \put(-122,2){${}_{-r_j}$}
 \put(-100,10){\circle*{3}}
 \put(-102,2){${}_{-j}$}
 \put(-80,10){\circle*{3}}
 \put(-82,2){${}_{-i}$}
 \put(-35,10){\circle*{3}}
 \put(-37,2){${}_{i}$}
 \put(-15,10){\circle*{3}}
 \put(-17,2){${}_{j}$}
 \put(5,10){\circle*{3}}
 \put(3,2){${}_{r_j}$}

 \qbezier(-100,10)(-90,30)(-80,10)
 \qbezier(-35,10)(-25,30)(-15,10)

 \put(30,10){\vector(1,0){25}}
 \multiput(65,10)(5,0){32}
 {\circle*{1}}
\put(80,10){\circle*{3}}
\put(78,2){${}_{-r_j}$}
\put(100,10){\circle*{3}}
\put(98,2){${}_{-j}$}
 \put(120,10){\circle*{3}}
 \put(118,2){${}_{-i}$}
 \put(165,10){\circle*{3}}
 \put(163,2){${}_{i}$}
 \put(185,10){\circle*{3}}
 \put(183,2){${}_{j}$}
 \put(205,10){\circle*{3}}
  \put(203,2){${}_{r_j}$}

 \qbezier(80,10)(100,40)(120,10)
  \qbezier(165,10)(185,40)(205,10)

\end{picture}
\end{center}
\end{itemize}

\item[(FM)]{\it One end of the arc is in the negative part and one end in the positive:} The arc is either $(-i,j)$ or $(i,-j)$ and a fixed point on the right of an arc is $r_j$ in the first case and $r_i$ in the second one. We consider both cases:
    \begin{itemize}
\item{(a)}  $(-i,j)$ and  $r_j$. Then $L^{\prime\prime}=L^-_{(-i,j)}(-i,r_j)$. There is no $L'$ such that $SR_{L^{\prime\prime}}$ is $R_{L'}.$
Two maximal SLPs $\{L'_1,\, L'_2\}$ such that  $R_{L'_s}\preceq SR_{L^{\prime\prime}}$  are $L'_1=L_0(-i,r_j)(i,-r_j)$ and $L'_2=L_0(j,-j)(r_j,-r_j)$.\\ 
One has $\dim \mathcal B_{L'_1}= \dim\mathcal B_L-1\ \Rightarrow\ L_1'\in D(L)$.
For $L'_2$ one has $\dim \mathcal B_{L'_2}=\dim \mathcal B_L-1$ iff  $r_i=r_j$. Moreover, if there is a fixed point $i_r\in[i,j]$ 
then $L\succ L_0(-i_r, j)(i_r,-j)\succ L'_2\ \Rightarrow\ L'_2\in D(L)$ iff $r_i=r_j.$\\
One has $X_{L'_1}=\lim\limits_{m\rightarrow\infty}T_j(\frac{1}{m}). U_{e_{r_j}-e_j}(1).X_L$ and\\
$X_{L'_2}=\lim\limits_{m\rightarrow\infty}T_j(\sqrt{-1}).T_i(\frac{1}{m}) . U_{e_j-e_i}(-1).U_{e_{r_j}-e_j}(1).U_{e_j-e_i}(\frac{1}{2}).X_L$.
\begin{center}
\begin{picture}(100,100)(0,-40)
\multiput(-130,10)(5,0){30}%
{\circle*{1}}
\put(-120,10){\circle*{3}}
 \put(-128,2){${}_{-r_j}$}
 \put(-100,10){\circle*{3}}
 \put(-109,2){${}_{-j}$}
 \put(-80,10){\circle*{3}}
 \put(-87,2){${}_{-i}$}
 \put(-35,10){\circle*{3}}
 \put(-37,2){${}_{i}$}
 \put(-15,10){\circle*{3}}
 \put(-17,2){${}_{j}$}
 \put(5,10){\circle*{3}}
 \put(3,2){${}_{r_j}$}

 \qbezier(-100,10)(-67.5,45)(-35,10)
 \qbezier(-80,10)(-47.5,45)(-15,10)

 \put(20,12){\vector(1,1){15}}
 \put(20,8){\vector(1,-1){15}}
 \put(40,25){$L'_1=$}
 \multiput(70,30)(5,0){30}
 {\circle*{1}}
\put(80,30){\circle*{3}}
\put(72,22){${}_{-r_j}$}
\put(100,30){\circle*{3}}
\put(93,22){${}_{-j}$}
 \put(120,30){\circle*{3}}
 \put(113,22){${}_{-i}$}
 \put(165,30){\circle*{3}}
 \put(163,22){${}_{i}$}
 \put(185,30){\circle*{3}}
 \put(183,22){${}_{j}$}
 \put(205,30){\circle*{3}}
  \put(203,22){${}_{r_j}$}

 \qbezier(80,30)(122.5,70)(165,30)
  \qbezier(120,30)(162.5,70)(205,30)
\put(40,-20){$L'_2=$}
  \multiput(70,-15)(5,0){30}
 {\circle*{1}}
\put(80,-15){\circle*{3}}
\put(72,-23){${}_{-r_j}$}
\put(100,-15){\circle*{3}}
\put(93,-23){${}_{-j}$}
 \put(120,-15){\circle*{3}}
 \put(113,-23){${}_{-i}$}
 \put(165,-15){\circle*{3}}
 \put(163,-23){${}_{i}$}
 \put(185,-15){\circle*{3}}
 \put(183,-23){${}_{j}$}
 \put(205,-15){\circle*{3}}
  \put(203,-23){${}_{r_j}$}
\qbezier(80,-15)(142.5,40)(205,-15)
  \qbezier(100,-15)(142.5,20)(185,-15)

\end{picture}
\end{center}

\item{(b)} $(i,-j)$ and  $r_i$. One has $L^{\prime\prime}=L^-_{(i,-j)}(r_i,-j)$.
If $r_i=r_j$ then $SR_{L^{\prime\prime}}=R_{L_2'}$ where $L_2'$ is from (FM(a)) and  $L'_2\in D(L)$.
If $r_i\ne r_j$ then $r_i<j.$ Then $SR_{L^{\prime\prime}}=R_{L'}$  where
$L'=L_0(r_i,-j)(-r_i,j)$ and  $\dim \mathcal B_{L'}=\dim \mathcal B_L-1\ \Rightarrow\ L'\in D(L)$.
One has $X_{L'}=\lim\limits_{m\rightarrow\infty} T_i(\frac{1}{m}).U_{e_{r_i}-e_i}(1).X_{L}$.
We draw only a new case:
\begin{center}
\begin{picture}(100,50)(0,-10)
\multiput(-135,10)(5,0){32}%
{\circle*{1}}
\put(-120,10){\circle*{3}}
 \put(-128,2){${}_{-j}$}
 \put(-100,10){\circle*{3}}
 \put(-108,2){${}_{-r_i}$}
 \put(-80,10){\circle*{3}}
 \put(-88,2){${}_{-i}$}
 \put(-35,10){\circle*{3}}
 \put(-37,2){${}_{i}$}
 \put(-15,10){\circle*{3}}
 \put(-18,2){${}_{r_i}$}
 \put(5,10){\circle*{3}}
 \put(3,2){${}_{j}$}

 \qbezier(-120,10)(-77.5,45)(-35,10)
 \qbezier(-80,10)(-37.5,45)(5,10)

 \put(30,12){\vector(2,0){25}}

  \multiput(65,10)(5,0){32}
 {\circle*{1}}
\put(80,10){\circle*{3}}
\put(72,2){${}_{-j}$}
\put(100,10){\circle*{3}}
\put(92,2){${}_{-r_i}$}
 \put(120,10){\circle*{3}}
 \put(112,2){${}_{-i}$}
 \put(165,10){\circle*{3}}
 \put(163,2){${}_{i}$}
 \put(185,10){\circle*{3}}
 \put(183,2){${}_{r_i}$}
 \put(205,10){\circle*{3}}
  \put(203,2){${}_{j}$}
\qbezier(80,10)(132.5,55)(185,10)
\qbezier(100,10)(162.5,55)(205,10)

\end{picture}
\end{center}
\end{itemize}

\item[(FN)] {\it The arc is in the negative part:}  The arc is $(-i,-j)$  and a fixed point is $r_{-i}>0$.
 Since there is no other fixed points on $[-i,r_{-i}]$ and by the symmetry we get $r_{-i}=r_j>j$. In this case $RS_{L^-_{(-i,-j)}(-j,r_j)}=R_{L_0(-j,j)(-r_j,r_j)}$ and $L\succ L_0(i,-j),(-i,j)\succ L_0(-j,j)(-r_j,r_j)$ so that $L_0(-j,j)(-r_j,r_j)\not\in D(L)$. 
This is the case showing that for $L^{\prime\prime}\in \widehat D(L)$ even if $SR_{L^{\prime\prime}}=R_{L'}$ this does not imply $L'\in D(L)$.
\end{itemize}
\medskip

{\it $\rightleftharpoons$ Intersection of consecutive arcs} (L). We can choose the new cross of $L^{\prime\prime}$ to be in a non-negative part, that is $L^{\prime\prime}\in \widehat D(L)$ is obtained by crossing $(\pm i,\pm j)$ and a positive arc $(k,l)$ such that or at least one of $(\pm i, \pm j)$ is with positive sign and it is smaller than $k$ or arc $(-i,-j)$ satisfies   $i<k$. Obviously, we always can make this choice by the symmetry. In this part we always denote by $(k,l)$ the right arc  of $L$ changed in order to get in a new cross of $L^{\prime\prime}$.
\begin{itemize}
\item[(LS)] {\it symmetric intersection:} The left arc is $(-k,-l)$ so that $L^{\prime\prime}=L_0(k,-l)(-k,l)\in {\bf SLP}_{2n}\ \Rightarrow\ L^{\prime\prime}\in D(L).$ One has $ X_{L^{\prime\prime}}=\lim\limits_{m\rightarrow\infty} T_l(\frac{1}{m}).U_{2e_k}(-m).X_{L}$.
 \begin{center}
\begin{picture}(100,45)
\multiput(-115,10)(5,0){24}%
{\circle*{1}}
 \put(-100,10){\circle*{3}}
 \put(-108,2){${}_{-l}$}
 \put(-80,10){\circle*{3}}
 \put(-88,2){${}_{-k}$}
 \put(-35,10){\circle*{3}}
 \put(-37,2){${}_{k}$}
 \put(-15,10){\circle*{3}}
 \put(-17,2){${}_{l}$}

 \qbezier(-100,10)(-90,30)(-80,10)
 \qbezier(-35,10)(-25,30)(-15,10)

 \put(30,10){\vector(1,0){25}}
 \multiput(85,10)(5,0){24}
 {\circle*{1}}
\put(100,10){\circle*{3}}
\put(92,2){${}_{-l}$}
 \put(120,10){\circle*{3}}
 \put(112,2){${}_{-k}$}
 \put(165,10){\circle*{3}}
 \put(163,2){${}_{k}$}
 \put(185,10){\circle*{3}}
 \put(183,2){${}_{l}$}

 \qbezier(100,10)(132.5,50)(165,10)
  \qbezier(120,10)(152.5,50)(185,10)

\end{picture}
\end{center}

\item[(LC)] {\it Intersection with a central arc}: The left arc is $(-i,i)\in L$  so that $L^{\prime\prime}=L^-_{(-i,i)(k,l)}(-i,k)(i,l)$ and $SR_{L^{\prime\prime}}=R_{L'}$ where $L'=L_0(-i,-l)(-k,k)(i,l)$ and $\dim \mathcal B_{L'}=\dim \mathcal B_L -1\ \Rightarrow\ L'\in D(L)$.\\
One has $X_{L'}=\lim\limits_{m\rightarrow\infty} T_k(m).T_i(\frac{1}{m}) . U_{e_k-e_i}(-\frac{1}{m}).X_L$.
     \begin{center}
\begin{picture}(100,50)(0,-10)
%\put(-20,35){$L_\sigma=$}
\multiput(-125,10)(5,0){28}%
{\circle*{1}}
 \put(-120,10){\circle*{3}}
 \put(-128,2){${}_{-l}$}
 \put(-100,10){\circle*{3}}
  \put(-108,2){${}_{-k}$}
  \put(-70,10){\circle*{3}}
  \put(-78,2){${}_{-i}$}
  \put(-45,10){\circle*{3}}
  \put(-47,2){${}_{i}$}
 \put(-15,10){\circle*{3}}
 \put(-17,2){${}_{k}$}
 \put(5,10){\circle*{3}}
 \put(3,2){${}_{l}$}

 \qbezier(-120,10)(-110,30)(-100,10)
 \qbezier(-15,10)(-5,30)(5,10)
 \qbezier(-70,10)(-57.5,35)(-45,10)

 \put(30,10){\vector(1,0){25}}
 \multiput(85,10)(5,0){28}
 {\circle*{1}}
\put(90,10){\circle*{3}}
\put(82,2){${}_{-l}$}
 \put(110,10){\circle*{3}}
 \put(102,2){${}_{-k}$}
 \put(140,10){\circle*{3}}
 \put(132,2){${}_{-i}$}
 \put(165,10){\circle*{3}}
 \put(163,2){${}_{i}$}
 \put(195,10){\circle*{3}}
 \put(193,2){${}_{k}$}
 \put(215,10){\circle*{3}}
 \put(213,2){${}_{l}$}

 \qbezier(90,10)(115,45)(140,10)
 \qbezier(110,10)(152.5,60)(195,10)
 \qbezier(165,10)(190,45)(215,10)
 \end{picture}
\end{center}

\item [(LP)] {\it Intersection with a positive arc}: The left arc is $(i,j)\in L$ where $j<k$ so that $L^{\prime\prime}=L_{(i,j)(k,l)}^-(i,k)(j,l).$ There is no such $L'$ that $SR_{L^{\prime\prime}}$ is $R_{L'}$. Two maximal SLPs $\{L'_1, L'_2\}$ such that $R_{L'_s}\prec SR_{L^{\prime\prime}}$
    are $L'_1=L_0(i,k)(j,l)(-i,-k)(-j,-l)$ and $L_2'=L_0(j,-j)(k,-k)(i,l)(-i,-l)$.
    Note that\\ $L\succ L^-_{(i,j)(-i,-j)}(i,-j)(-i,j)\succ L_2'$ $\Rightarrow$ $L'_2\not\in D(L)$.\\ 
		As for $L'_1$:
         $\dim \mathcal B_{L_1'}=\dim \mathcal B_L-1\ \Rightarrow\ L_1'\in D(L)$.\\
				One has $ X_{L'_1}=\lim\limits_{m\rightarrow\infty}T_l(-\frac{1}{m}).T_i(m).U_{e_k-e_j}(m).X_L.$

 \begin{center}
\begin{picture}(100,45)(0,-10)
\multiput(-145,10)(5,0){36}%
{\circle*{1}}
\put(-140,10){\circle*{3}}
 \put(-148,2){${}_{-l}$}
\put(-120,10){\circle*{3}}
 \put(-128,2){${}_{-k}$}
 \put(-100,10){\circle*{3}}
 \put(-108,2){${}_{-j}$}
 \put(-80,10){\circle*{3}}
 \put(-88,2){${}_{-i}$}
 \put(-35,10){\circle*{3}}
 \put(-37,2){${}_{i}$}
 \put(-15,10){\circle*{3}}
 \put(-17,2){${}_{j}$}
 \put(5,10){\circle*{3}}
 \put(3,2){${}_{k}$}
 \put(25,10){\circle*{3}}
 \put(23,2){${}_{l}$}

 \qbezier(-100,10)(-90,30)(-80,10)
 \qbezier(-35,10)(-25,30)(-15,10)
\qbezier(-140,10)(-130,30)(-120,10)
\qbezier(5,10)(15,30)(25,10)
 \put(35,10){\vector(1,0){15}}

 \multiput(55,10)(5,0){36}
 {\circle*{1}}
 \put(60,10){\circle*{3}}
\put(52,2){${}_{-l}$}
\put(80,10){\circle*{3}}
\put(72,2){${}_{-k}$}
\put(100,10){\circle*{3}}
\put(92,2){${}_{-j}$}
 \put(120,10){\circle*{3}}
 \put(112,2){${}_{-i}$}
 \put(165,10){\circle*{3}}
 \put(163,2){${}_{i}$}
 \put(185,10){\circle*{3}}
 \put(183,2){${}_{j}$}
 \put(205,10){\circle*{3}}
  \put(203,2){${}_{k}$}
  \put(225,10){\circle*{3}}
  \put(223,2){${}_{l}$}

 \qbezier(80,10)(100,40)(120,10)
  \qbezier(165,10)(185,40)(205,10)
  \qbezier(60,10)(80,40)(100,10)
  \qbezier(185,10)(205,40)(225,10)

\end{picture}
\end{center}

\item[(LM)] {\it Intersection with a mixed arc}: The left arc has one positive and one negative end. Here we have to distinguish case $(-i,j)$ and $(i,-j)$ :
\begin{itemize}
\item{(a)}  $(-i,j)$:   $L^{\prime\prime}=L_{(-i,j)(k,l)}^-(-i,k)(j,l)$. There is no  $L'$ such that $SR_{L^{\prime\prime}}$ is $R_{L'}$. 
Two maximal SLPs $\{L'_1, L'_2\}$ such that $R_{L'_s}\prec SR_{L^{\prime\prime}}$  are $L'_1=L_0(-i,k)(j,l)(i,-k)(-j,-l)$\\ and $L_2'=L_0(j,-j)(k,-k)(i,l)(-i,-l)$;
One has $\dim \mathcal B_{L'_1}=\dim\mathcal B_L-1\ \Rightarrow\ L'_1\in D(L)$. 
As for $L'_2$ one has $\dim \mathcal B_{L'_2}=\dim\mathcal B_L-1\ \Rightarrow\ L'_2\in D(L)$ 
iff there is no fixed point $r_i\in[i,j]$. Otherwise $L\succ L_{(i,-j)(-i,j)}(r_i,-j)(-r_i,j)\succ L_2\ \Rightarrow\ L'_2\not\in D(L)$.\\
One has $X_{L'_1}=\lim\limits_{m\rightarrow\infty}T_k(m).T_j(-\frac{1}{m}).U_{e_k-e_j}(\frac{1}{m}).X_L$ and \\
$X_{L'_2}=\lim\limits_{m\rightarrow\infty}T_l(-\frac{1}{m}).T_j(\sqrt{-1}).T_i(\frac{1}{m}).U_{e_k-e_i}(1).U_{e_k-e_j}(1).U_{e_j-e_i}(-\frac{1}{2}).X_{L}$
\begin{center}
\begin{picture}(100,100)(0,-30)
\multiput(-165,10)(5,0){36}%
{\circle*{1}}
\put(-160,10){\circle*{3}}
 \put(-168,2){${}_{-l}$}
\put(-140,10){\circle*{3}}
 \put(-148,2){${}_{-k}$}
 \put(-120,10){\circle*{3}}
 \put(-128,2){${}_{-j}$}
 \put(-100,10){\circle*{3}}
 \put(-108,2){${}_{-i}$}
 \put(-55,10){\circle*{3}}
 \put(-57,2){${}_{i}$}
 \put(-35,10){\circle*{3}}
 \put(-37,2){${}_{j}$}
 \put(-15,10){\circle*{3}}
 \put(-17,2){${}_{k}$}
 \put(5,10){\circle*{3}}
 \put(3,2){${}_{l}$}

 \qbezier(-120,10)(-87.5,45)(-55,10)
 \qbezier(-100,10)(-67.5,45)(-35,10)
 \qbezier(-160,10)(-150,30)(-140,10)
\qbezier(-15,10)(-5,30)(5,10)

 \put(15,12){\vector(1,1){12}}
 \put(15,6){\vector(1,-1){12}}
\put(30,25){$L'_1=$}
 \multiput(55,30)(5,0){36}
 {\circle*{1}}
 \put(60,30){\circle*{3}}
\put(52,22){${}_{-l}$}
\put(80,30){\circle*{3}}
\put(72,22){${}_{-k}$}
\put(100,30){\circle*{3}}
\put(92,22){${}_{-j}$}
 \put(120,30){\circle*{3}}
 \put(112,22){${}_{-i}$}
 \put(165,30){\circle*{3}}
 \put(163,22){${}_{i}$}
 \put(185,30){\circle*{3}}
 \put(183,22){${}_{j}$}
 \put(205,30){\circle*{3}}
    \put(203,22){${}_{k}$}
  \put(225,30){\circle*{3}}
  \put(223,22){${}_{l}$}

 \qbezier(80,30)(122.5,80)(165,30)
  \qbezier(120,30)(162.5,80)(205,30)
  \qbezier(60,30)(80,60)(100,30)
  \qbezier(185,30)(205,60)(225,30)
\put(30,-18){$L_2'=$}
\multiput(55,-15)(5,0){36}
 {\circle*{1}}
 \put(60,-15){\circle*{3}}
\put(52,-22){${}_{-l}$}
\put(80,-15){\circle*{3}}
\put(72,-22){${}_{-k}$}
\put(100,-15){\circle*{3}}
\put(92,-23){${}_{-j}$}
 \put(120,-15){\circle*{3}}
 \put(112,-23){${}_{-i}$}
 \put(165,-15){\circle*{3}}
 \put(163,-23){${}_{i}$}
 \put(185,-15){\circle*{3}}
 \put(183,-23){${}_{j}$}
 \put(205,-15){\circle*{3}}
  \put(203,-23){${}_{k}$}
  \put(225,-15){\circle*{3}}
  \put(223,-23){${}_{l}$}
\qbezier(80,-15)(142.5,40)(205,-15)
  \qbezier(100,-15)(142.5,20)(185,-15)
\qbezier(60,-15)(90,20)(120,-15)
\qbezier(165,-15)(195,20)(225,-15)
\end{picture}
\end{center}

\item{(b)} $(i,-j)$ where $i<k.$ Here $j$ can be any point of $[i,n]$. 
$L^{\prime\prime}=L_{(i,-j)(k,l)}^-(k,-j)(i,l)$ and $SR_{L^{\prime\prime}}=R_{L'}$ where
$$L'=\left\{\begin{array}{ll}L_0(j,-j)(k,-k)(i,l)(-i,-l)&{\rm if}\ j<k;\\
                             L_0(k,-j)(-k,j)(i,l)(-i,-l)&{\rm otherwise};\\
                             \end{array}\right.$$
In both cases one has $\dim\mathcal B_{L'}=\dim\mathcal B_L-1\ \Rightarrow\ L'\in D(L)$.  If $j<k$ then $L'=L'_2$ from (LM(a)) so we omit it.\\
One has $X_{L'}=\lim\limits_{m\rightarrow\infty}T_l(\frac{1}{m}).T_j(-\frac{1}{m}).U_{e_k-e_i}(-m).X_L$.

\begin{center}
\begin{picture}(100,60)(0,-10)
\multiput(-145,10)(5,0){36}%
{\circle*{1}}
\put(-140,10){\circle*{3}}
 \put(-148,2){${}_{-l}$}
\put(-120,10){\circle*{3}}
 \put(-128,2){${}_{-j}$}
 \put(-100,10){\circle*{3}}
 \put(-108,2){${}_{-k}$}
 \put(-80,10){\circle*{3}}
 \put(-88,2){${}_{-i}$}
 \put(-35,10){\circle*{3}}
 \put(-37,2){${}_{i}$}
 \put(-15,10){\circle*{3}}
 \put(-17,2){${}_{k}$}
 \put(5,10){\circle*{3}}
 \put(3,2){${}_{j}$}
\put(25,10){\circle*{3}}
 \put(23,2){${}_{l}$}

 \qbezier(-120,10)(-77.5,60)(-35,10)
 \qbezier(-80,10)(-37.5,60)(5,10)
 \qbezier(-140,10)(-120,40)(-100,10)
 \qbezier(-15,10)(5,40)(25,10)

 \put(35,12){\vector(2,0){17}}

\multiput(55,10)(5,0){36}
 {\circle*{1}}
 \put(60,10){\circle*{3}}
\put(52,2){${}_{-l}$}

\put(80,10){\circle*{3}}
\put(72,2){${}_{-j}$}
\put(100,10){\circle*{3}}
\put(92,2){${}_{-k}$}
 \put(120,10){\circle*{3}}
 \put(112,2){${}_{-i}$}
 \put(165,10){\circle*{3}}
 \put(163,2){${}_{i}$}
 \put(185,10){\circle*{3}}
 \put(183,2){${}_{k}$}
 \put(205,10){\circle*{3}}
  \put(202,2){${}_{j}$}
 \put(225,10){\circle*{3}}
  \put(223,2){${}_{l}$}

\qbezier(80,10)(132.5,70)(185,10)
\qbezier(100,10)(162.5,70)(205,10)
\qbezier(60,10)(90,50)(120,10)
\qbezier(165,10)(195,50)(225,10)

\end{picture}
\end{center}
If $j>l$ then the picture is:
\begin{center}
\begin{picture}(100,60)(0,-10)
\multiput(-145,10)(5,0){36}%
{\circle*{1}}
\put(-140,10){\circle*{3}}
 \put(-148,2){${}_{-j}$}
\put(-120,10){\circle*{3}}
 \put(-128,2){${}_{-l}$}
 \put(-100,10){\circle*{3}}
 \put(-108,2){${}_{-k}$}
 \put(-80,10){\circle*{3}}
 \put(-88,2){${}_{-i}$}
 \put(-35,10){\circle*{3}}
 \put(-37,2){${}_{i}$}
 \put(-15,10){\circle*{3}}
 \put(-17,2){${}_{k}$}
 \put(5,10){\circle*{3}}
 \put(3,2){${}_{l}$}
\put(25,10){\circle*{3}}
 \put(23,2){${}_{j}$}

 \qbezier(-120,10)(-110,30)(-100,10)
\qbezier(-15,10)(-5,30)(5,10)
\qbezier(-140,10)(-87.5,60)(-35,10)
\qbezier(-80,10)(-27.5,60)(25,10)

 \put(35,12){\vector(2,0){17}}

\multiput(55,10)(5,0){36}
 {\circle*{1}}
 \put(60,10){\circle*{3}}
\put(52,2){${}_{-j}$}

\put(80,10){\circle*{3}}
\put(72,2){${}_{-l}$}
\put(100,10){\circle*{3}}
\put(92,2){${}_{-k}$}
 \put(120,10){\circle*{3}}
 \put(112,2){${}_{-i}$}
 \put(165,10){\circle*{3}}
 \put(163,2){${}_{i}$}
 \put(185,10){\circle*{3}}
 \put(183,2){${}_{k}$}
 \put(205,10){\circle*{3}}
  \put(203,2){${}_{l}$}
 \put(225,10){\circle*{3}}
  \put(223,2){${}_{j}$}

\qbezier(80,10)(100,40)(120,10)
\qbezier(100,10)(162.5,65)(225,10)
\qbezier(60,10)(122.5,65)(185,10)
\qbezier(165,10)(185,40)(205,10)
\end{picture}
\end{center}
\end{itemize}

\item[(LN)] {\it Intersection with a negative arc}: The left arc is $(-i,-j)$ where $i<k$ (since the cross is in a nonnegative part). Note that
$j<l$, otherwise $(k,l)$ is under $(i,j)$ so that $(-i,-j)$ and $(k,l)$ do not satisfy the condition of minimality.
In this case $L^{\prime\prime}=L_{(-i,-j)(k,l)}^-(-j,k)(-i,l)$ so that $SR_{L^{\prime\prime}}=R_{L'}$ where
$$L'=\left\{\begin{array}{ll}L_0(j,-j)(k,-k)(i,l)(-i,-l)&{\rm if}\ j<k;\\
                             L_0(j,-k)(k,-j)(i,l)(-i,-j)&{\rm otherwise};\\
                             \end{array}\right.$$
In both cases $L\succ L^-_{(i,j)(-i,-j)}(i,-j)(-i,j)\succ L'$ so that $L'\not\in D(L).$

\end{itemize}
\medskip
{\it $\rightleftharpoons$ Intersection of concentric arcs} (C). Exactly as in the previous case we can consider only $L^{\prime\prime}$ obtained by a  crossing of concentric arcs in the non negative part. Again let us fix an external arc $(i,\pm l)$ or $(-i,l)$ where $0<i<l$ then the conditions for a new cross to be non-negative are: internal arc $(\pm j,k)$ (that is such that $\pm j,k\in [i,\pm l]$ or resp. $\pm j,k\in [-i, l]$) satisfies $k>0$ and $j\leq k$.
Note that in the case when the external arc is $(i,-l)$ the new crossing will move the cross of $(-i,l)$ and $(i,-l)$ to the negative part.

\begin{itemize}
\item[(CS)] {\it Intersection of central arcs}: Both  external and internal arcs of $L$ are  central: $(-l,l)$ and $(-j,j)$  where $0<j<l$ so that $L^{\prime\prime}=L^-_{(-l,l)(-j,j)}(j,-l)(-j,l)\in {\bf SLP}_{2n}\ \Rightarrow\ L^{\prime\prime}\in D(L).$
    One has $X_{L^{\prime\prime}}=\lim\limits_{m\rightarrow\infty} T_l(\frac{m}{\sqrt{-1}}).T_j(\frac{1}{m}). U_{e_l-e_j}(\sqrt{-1}).X_L$.
\begin{center}
\begin{picture}(100,45)
\multiput(-115,10)(5,0){24}%
{\circle*{1}}
 \put(-100,10){\circle*{3}}
 \put(-108,2){${}_{-l}$}
 \put(-80,10){\circle*{3}}
 \put(-88,2){${}_{-j}$}
 \put(-35,10){\circle*{3}}
 \put(-37,2){${}_{j}$}
 \put(-15,10){\circle*{3}}
 \put(-17,2){${}_{l}$}

 \qbezier(-100,10)(-57.5,60)(-15,10)
 \qbezier(-80,10)(-57.5,40)(-35,10)
 \put(30,10){\vector(1,0){25}}
 \multiput(85,10)(5,0){24}
 {\circle*{1}}
\put(100,10){\circle*{3}}
\put(92,2){${}_{-l}$}
 \put(120,10){\circle*{3}}
 \put(112,2){${}_{-j}$}
 \put(165,10){\circle*{3}}
 \put(163,2){${}_{j}$}
 \put(185,10){\circle*{3}}
 \put(183,2){${}_{l}$}

 \qbezier(100,10)(132.5,50)(165,10)
  \qbezier(120,10)(152.5,50)(185,10)

\end{picture}
\end{center}

\item[(CCP)] {\it Intersection of external central arc and positive internal}: The external arc is  $(-l,l)$ and the internal arc is  $(j,k)$.
 Then $L^{\prime\prime}=L_{(-l,l)(j,k)}^-(k,-l)(j,l)$ and $SR_{L^{\prime\prime}}=R_{L'}$ where $L'=L_0(k,-k)(j,l)(-j,-l)$, and
 $\dim \mathcal B_{L'}=\dim\mathcal B_L-1\ \Rightarrow\ L'\in D(L).$\\
One has $X_{L'}=\lim\limits_{m\rightarrow\infty}T_l(m).T_j(m).U_{e_l+e_j}(\frac{1}{2}).U_{e_k+e_j}(-\frac{1}{2}).U_{e_l-e_k}(1).X_L$.
\begin{center}
\begin{picture}(100,45)
\multiput(-115,10)(5,0){24}%
{\circle*{1}}
 \put(-100,10){\circle*{3}}
 \put(-108,2){${}_{-l}$}
 \put(-80,10){\circle*{3}}
 \put(-88,2){${}_{-k}$}
 \put(-65,10){\circle*{3}}
 \put(-72,2){${}_{-j}$}
 \put(-50,10){\circle*{3}}
 \put(-52,2){${}_{j}$}
 \put(-35,10){\circle*{3}}
 \put(-37,2){${}_{k}$}
 \put(-15,10){\circle*{3}}
 \put(-17,2){${}_{l}$}

 \qbezier(-100,10)(-57.5,60)(-15,10)
 \qbezier(-80,10)(-72.5,30)(-65,10)
\qbezier(-50,10)(-42.5,30)(-35,10)

 \put(30,10){\vector(1,0){25}}
 \multiput(85,10)(5,0){24}
 {\circle*{1}}
\put(100,10){\circle*{3}}
\put(92,2){${}_{-l}$}
 \put(120,10){\circle*{3}}
 \put(112,2){${}_{-k}$}
 \put(135,10){\circle*{3}}
 \put(127,2){${}_{-j}$}
 \put(150,10){\circle*{3}}
 \put(148,2){${}_{j}$}
 \put(165,10){\circle*{3}}
 \put(163,2){${}_{k}$}
 \put(185,10){\circle*{3}}
 \put(183,2){${}_{l}$}

 \qbezier(100,10)(117.5,35)(135,10)
 \qbezier(150,10)(167.5,35)(185,10)
  \qbezier(120,10)(142.5,45)(165,10)

\end{picture}
\end{center}

\item[(CCM)] {\it Intersection of external central arc and mixed internal arc}: The external arc is  $(-l,l)$ and the internal arc is $(-j,k)$. Then $L^{\prime\prime}=L_{(-l,l)(-j,k)}^-(k,-l)(-j,l)$ and $SR_{L^{\prime\prime}}=R_{L'}$ where $L'=L_0(j,-j)(-i,k)(i,-k)$ and
    $\dim \mathcal B_{L'}=\dim\mathcal B_L-1\ \Rightarrow\ L'\in D(L).$ \\
		One has $X_{L'}=\lim\limits_{n\rightarrow\infty}T_l(m).T_j(\frac{1}{m}).U_{e_l-e_j}(-\frac{1}{2}).U_{e_k-e_j}(\frac{1}{2}).U_{e_l-e_k}(1). X_L$.
\begin{center}
\begin{picture}(100,45)
\multiput(-115,10)(5,0){24}%
{\circle*{1}}
 \put(-100,10){\circle*{3}}
 \put(-108,2){${}_{-l}$}
 \put(-80,10){\circle*{3}}
 \put(-88,2){${}_{-k}$}
 \put(-65,10){\circle*{3}}
 \put(-73,2){${}_{-j}$}
 \put(-50,10){\circle*{3}}
 \put(-52,2){${}_{j}$}
 \put(-35,10){\circle*{3}}
 \put(-37,2){${}_{k}$}
 \put(-15,10){\circle*{3}}
 \put(-17,2){${}_{l}$}

 \qbezier(-100,10)(-57.5,60)(-15,10)
 \qbezier(-80,10)(-65,30)(-50,10)
\qbezier(-65,10)(-50,30)(-35,10)

 \put(30,10){\vector(1,0){25}}
 \multiput(85,10)(5,0){24}
 {\circle*{1}}
\put(100,10){\circle*{3}}
\put(92,2){${}_{-l}$}
 \put(120,10){\circle*{3}}
 \put(112,2){${}_{-k}$}
 \put(135,10){\circle*{3}}
 \put(127,2){${}_{-j}$}
 \put(150,10){\circle*{3}}
 \put(148,2){${}_{j}$}
 \put(165,10){\circle*{3}}
 \put(163,2){${}_{k}$}
 \put(185,10){\circle*{3}}
 \put(183,2){${}_{l}$}

 \qbezier(100,10)(125,45)(150,10)
 \qbezier(135,10)(160,45)(185,10)
  \qbezier(120,10)(142.5,45)(165,10)

\end{picture}
\end{center}

\item[(CPP)] {\it Both external and internal arcs are positive}: If the external arc is $(i,l)$ and the internal $(j,k)$ where $i<j<k<l$. 
In this case $L^{\prime\prime}=L_{(i,l)(j,k)}^-(i,k)(j,l)$ and $SR_{L^{\prime\prime}}=R_{L'}$ where $L'=L_0(i,k)(-i,-k)(j,l)(-j,-l)$ and\\
$\dim \mathcal B_{L'}=\dim\mathcal B_L-1\ \Rightarrow\ L'\in D(L).$\\
One has $X_{L'}=\lim\limits_{m\rightarrow\infty}T_l(-m).T_j(m).U_{e_j-e_i}(-1).U_{e_l-e_k}(-1).X_L$.
\begin{center}
\begin{picture}(100,50)(0,-10)
\multiput(-145,10)(5,0){36}%
{\circle*{1}}
\put(-140,10){\circle*{3}}
 \put(-148,2){${}_{-l}$}
\put(-120,10){\circle*{3}}
 \put(-128,2){${}_{-k}$}
 \put(-100,10){\circle*{3}}
 \put(-108,2){${}_{-j}$}
 \put(-80,10){\circle*{3}}
 \put(-88,2){${}_{-i}$}
 \put(-35,10){\circle*{3}}
 \put(-37,2){${}_{i}$}
 \put(-15,10){\circle*{3}}
 \put(-17,2){${}_{j}$}
 \put(5,10){\circle*{3}}
 \put(3,2){${}_{k}$}
\put(25,10){\circle*{3}}
 \put(23,2){${}_{l}$}

 \qbezier(-140,10)(-110,50)(-80,10)
 \qbezier(-120,10)(-110,25)(-100,10)
 \qbezier(-15,10)(-5,25)(5,10)
 \qbezier(-35,10)(-5,50)(25,10)

 \put(35,12){\vector(2,0){17}}

\multiput(55,10)(5,0){36}
 {\circle*{1}}
 \put(60,10){\circle*{3}}
\put(52,2){${}_{-l}$}
\put(80,10){\circle*{3}}
\put(72,2){${}_{-k}$}
\put(100,10){\circle*{3}}
\put(92,2){${}_{-j}$}
 \put(120,10){\circle*{3}}
 \put(112,2){${}_{-i}$}
 \put(165,10){\circle*{3}}
 \put(163,2){${}_{i}$}
 \put(185,10){\circle*{3}}
 \put(183,2){${}_{j}$}
 \put(205,10){\circle*{3}}
  \put(202,2){${}_{k}$}
 \put(225,10){\circle*{3}}
  \put(223,2){${}_{l}$}

\qbezier(60,10)(80,40)(100,10)
\qbezier(80,10)(100,40)(120,10)
\qbezier(165,10)(185,40)(205,10)
\qbezier(185,10)(205,40)(225,10)

\end{picture}
\end{center}

\item[(CMC)] {\it The external arc is mixed and the internal arc is central}: The external arc is  $(-i,l)$ and the internal arc $(-j,j)$. Note that
 $j<i$ and the crossing of $(-j,j)$ with $(-i,l)$ or with $(i,-l)$ are symmetric so we can choose $L^{\prime\prime}$ to be obtained by crossing with $(-i,l)$ so that $L^{\prime\prime}=L^-_{(-i,l)(-j,j)}(-j,l)(j,-i)$
and $SR_{L^{\prime\prime}}=R_{L'}$ where $L'=L_0(-j,l)(j,-l)(i,-i)$.
One has $\dim \mathcal B_{L'}=\dim\mathcal B_L-1\ \Rightarrow\ L'\in D(L).$ 
And $X_{L'}=\lim\limits_{m\rightarrow\infty}T_l(-m).T_j(\frac{1}{n}).U_{e_i-e_j}(\frac{1}{2}).U_{e_l-e_i}(-2).U_{e_i-e_j}(\frac{1}{2}).X_L.$
\begin{center}
\begin{picture}(100,45)
%\put(-20,35){$L_\sigma=$}
\multiput(-115,10)(5,0){24}%
{\circle*{1}}
 \put(-100,10){\circle*{3}}
 \put(-108,2){${}_{-l}$}
 \put(-80,10){\circle*{3}}
 \put(-88,2){${}_{-i}$}
 \put(-65,10){\circle*{3}}
 \put(-73,2){${}_{-j}$}
 \put(-50,10){\circle*{3}}
 \put(-52,2){${}_{j}$}
 \put(-35,10){\circle*{3}}
 \put(-37,2){${}_{i}$}
 \put(-15,10){\circle*{3}}
 \put(-17,2){${}_{l}$}

 \qbezier(-100,10)(-67.5,50)(-35,10)
 \qbezier(-80,10)(-47.5,50)(-15,10)
\qbezier(-65,10)(-57.5,25)(-50,10)

 \put(30,10){\vector(1,0){25}}
 \multiput(85,10)(5,0){24}
 {\circle*{1}}
\put(100,10){\circle*{3}}
\put(92,2){${}_{-l}$}
 \put(120,10){\circle*{3}}
 \put(112,2){${}_{-i}$}
 \put(135,10){\circle*{3}}
 \put(127,2){${}_{-j}$}
 \put(150,10){\circle*{3}}
 \put(148,2){${}_{j}$}
 \put(165,10){\circle*{3}}
 \put(163,2){${}_{i}$}
 \put(185,10){\circle*{3}}
 \put(183,2){${}_{l}$}

 \qbezier(100,10)(125,45)(150,10)
 \qbezier(135,10)(160,45)(185,10)
  \qbezier(120,10)(142.5,45)(165,10)

\end{picture}
\end{center}

\item[(CMP)] {\it The external arc is mixed and the internal arc is positive}: The external arc is either $(-i,l)$ or $(i,-l)$ 
and the internal arc is $(j,k)$ where $k<l$ or resp. $k<i$. Here we have to distinguish these two cases:\\
    (a) The external arc is $(-i,l)$. Then $L^{\prime\prime}=L_{(j,k)(-i,l)}^-(j,l)(-i,k)$ and we have to distinguish two subcases:
    \begin{itemize}
    \item {(i)} If $k>i$ then $SR_{L^{\prime\prime}}=R_{L'}$ where $L'= L_0(j,l)(-j,-l)(-i,k)(i,-k)$ and
    $\dim \mathcal B_{L'}=\dim\mathcal B_L-1\ \Rightarrow\ L'\in D(L).$\\
    One has $X_{L'}=\lim\limits_{m\rightarrow\infty}T_k(\frac{1}{m}).T_i(-m).U_{e_l-e_k}(1).U_{e_j+e_i}(1).X_L$.

If $i<j$ the picture is:
\begin{center}
\begin{picture}(100,50)(0,-10)
\multiput(-145,10)(5,0){36}%
{\circle*{1}}
\put(-140,10){\circle*{3}}
 \put(-148,2){${}_{-l}$}
\put(-120,10){\circle*{3}}
 \put(-128,2){${}_{-k}$}
 \put(-100,10){\circle*{3}}
 \put(-108,2){${}_{-j}$}
 \put(-80,10){\circle*{3}}
 \put(-88,2){${}_{-i}$}
 \put(-35,10){\circle*{3}}
 \put(-37,2){${}_{i}$}
 \put(-15,10){\circle*{3}}
 \put(-17,2){${}_{j}$}
 \put(5,10){\circle*{3}}
 \put(3,2){${}_{k}$}
\put(25,10){\circle*{3}}
 \put(23,2){${}_{l}$}

 \qbezier(-140,10)(-87.5,60)(-35,10)
 \qbezier(-120,10)(-110,25)(-100,10)
 \qbezier(-15,10)(-5,25)(5,10)
 \qbezier(-80,10)(-27.5,60)(25,10)

 \put(35,12){\vector(2,0){17}}

\multiput(55,10)(5,0){36}
 {\circle*{1}}
 \put(60,10){\circle*{3}}
\put(52,2){${}_{-l}$}
\put(80,10){\circle*{3}}
\put(72,2){${}_{-k}$}
\put(100,10){\circle*{3}}
\put(92,2){${}_{-j}$}
 \put(120,10){\circle*{3}}
 \put(112,2){${}_{-i}$}
 \put(165,10){\circle*{3}}
 \put(163,2){${}_{i}$}
 \put(185,10){\circle*{3}}
 \put(183,2){${}_{j}$}
 \put(205,10){\circle*{3}}
  \put(202,2){${}_{k}$}
 \put(225,10){\circle*{3}}
  \put(223,2){${}_{l}$}

\qbezier(60,10)(80,40)(100,10)
\qbezier(80,10)(122.5,55)(165,10)
\qbezier(120,10)(162.5,55)(205,10)
\qbezier(185,10)(205,40)(225,10)

\end{picture}
\end{center}
If $i>j$ the picture is:
\begin{center}
\begin{picture}(100,60)(0,-10)
%\put(-20,35){$L_\sigma=$}
\multiput(-145,10)(5,0){36}%
{\circle*{1}}
\put(-140,10){\circle*{3}}
 \put(-148,2){${}_{-l}$}
\put(-120,10){\circle*{3}}
 \put(-128,2){${}_{-k}$}
 \put(-100,10){\circle*{3}}
 \put(-108,2){${}_{-i}$}
 \put(-80,10){\circle*{3}}
 \put(-88,2){${}_{-j}$}
 \put(-35,10){\circle*{3}}
 \put(-37,2){${}_{j}$}
 \put(-15,10){\circle*{3}}
 \put(-17,2){${}_{i}$}
 \put(5,10){\circle*{3}}
 \put(3,2){${}_{k}$}
\put(25,10){\circle*{3}}
 \put(23,2){${}_{l}$}

 \qbezier(-140,10)(-77.5,70)(-15,10)
 \qbezier(-120,10)(-100,35)(-80,10)
 \qbezier(-35,10)(-15,35)(5,10)
 \qbezier(-100,10)(-37.5,70)(25,10)

 \put(35,12){\vector(2,0){17}}

\multiput(55,10)(5,0){36}
 {\circle*{1}}
 \put(60,10){\circle*{3}}
\put(52,2){${}_{-l}$}
\put(80,10){\circle*{3}}
\put(72,2){${}_{-k}$}
\put(100,10){\circle*{3}}
\put(92,2){${}_{-i}$}
 \put(120,10){\circle*{3}}
 \put(112,2){${}_{-j}$}
 \put(165,10){\circle*{3}}
 \put(163,2){${}_{j}$}
 \put(185,10){\circle*{3}}
 \put(183,2){${}_{i}$}
 \put(205,10){\circle*{3}}
  \put(202,2){${}_{k}$}
 \put(225,10){\circle*{3}}
  \put(223,2){${}_{l}$}

\qbezier(60,10)(90,50)(120,10)
\qbezier(80,10)(132.5,70)(185,10)
\qbezier(100,10)(152.5,70)(205,10)
\qbezier(165,10)(195,50)(225,10)

\end{picture}
\end{center}

\item{(ii)} The case  $k<i$ was considered in detail in \ref{5.3}. We provide the picture for the completeness
  \begin{center}
\begin{picture}(100,60)(0,-10)
\multiput(-145,10)(5,0){36}%
{\circle*{1}}
\put(-140,10){\circle*{3}}
 \put(-148,2){${}_{-l}$}
\put(-120,10){\circle*{3}}
 \put(-128,2){${}_{-i}$}
 \put(-100,10){\circle*{3}}
 \put(-108,2){${}_{-k}$}
 \put(-80,10){\circle*{3}}
 \put(-88,2){${}_{-j}$}
 \put(-35,10){\circle*{3}}
 \put(-37,2){${}_{j}$}
 \put(-15,10){\circle*{3}}
 \put(-17,2){${}_{k}$}
 \put(5,10){\circle*{3}}
 \put(3,2){${}_{i}$}
\put(25,10){\circle*{3}}
 \put(23,2){${}_{l}$}

 \qbezier(-140,10)(-67.5,70)(5,10)
 \qbezier(-120,10)(-47.5,70)(25,10)
 \qbezier(-100,10)(-90,30)(-80,10)
 \qbezier(-35,10)(-25,30)(-15,10)

 \put(35,12){\vector(2,0){17}}

\multiput(55,10)(5,0){36}
 {\circle*{1}}
 \put(60,10){\circle*{3}}
\put(52,2){${}_{-l}$}
\put(80,10){\circle*{3}}
\put(72,2){${}_{-k}$}
\put(100,10){\circle*{3}}
\put(92,2){${}_{-j}$}
 \put(120,10){\circle*{3}}
 \put(112,2){${}_{-i}$}
 \put(165,10){\circle*{3}}
 \put(163,2){${}_{i}$}
 \put(185,10){\circle*{3}}
 \put(183,2){${}_{j}$}
 \put(205,10){\circle*{3}}
  \put(202,2){${}_{k}$}
 \put(225,10){\circle*{3}}
  \put(223,2){${}_{l}$}

\qbezier(60,10)(90,50)(120,10)
\qbezier(165,10)(195,50)(225,10)
\qbezier(80,10)(142.5,70)(205,10)
\qbezier(100,10)(142.5,50)(185,10)

\end{picture}
\end{center}
\end{itemize}

\noindent
 (b) The external arc is $(i,-l)$. This case was also considered in detail in \ref{5.3}. For completeness we show  the new case.
\begin{center}
\begin{picture}(100,70)(0,-10)
\multiput(-145,10)(5,0){36}%
{\circle*{1}}
\put(-140,10){\circle*{3}}
 \put(-148,2){${}_{-l}$}
\put(-120,10){\circle*{3}}
 \put(-128,2){${}_{-i}$}
 \put(-100,10){\circle*{3}}
 \put(-108,2){${}_{-k}$}
 \put(-80,10){\circle*{3}}
 \put(-88,2){${}_{-j}$}
 \put(-35,10){\circle*{3}}
 \put(-37,2){${}_{j}$}
 \put(-15,10){\circle*{3}}
 \put(-17,2){${}_{k}$}
 \put(5,10){\circle*{3}}
 \put(3,2){${}_{i}$}
\put(25,10){\circle*{3}}
 \put(23,2){${}_{l}$}

 \qbezier(-140,10)(-67.5,70)(5,10)
 \qbezier(-120,10)(-47.5,70)(25,10)
 \qbezier(-100,10)(-90,30)(-80,10)
 \qbezier(-35,10)(-25,30)(-15,10)

 \put(35,12){\vector(2,0){17}}

\multiput(55,10)(5,0){36}
 {\circle*{1}}
 \put(60,10){\circle*{3}}
\put(52,2){${}_{-l}$}
\put(80,10){\circle*{3}}
\put(72,2){${}_{-i}$}
\put(100,10){\circle*{3}}
\put(92,2){${}_{-k}$}
 \put(120,10){\circle*{3}}
 \put(112,2){${}_{-j}$}
 \put(165,10){\circle*{3}}
 \put(163,2){${}_{j}$}
 \put(185,10){\circle*{3}}
 \put(183,2){${}_{k}$}
 \put(205,10){\circle*{3}}
  \put(202,2){${}_{i}$}
 \put(225,10){\circle*{3}}
  \put(223,2){${}_{l}$}

\qbezier(60,10)(122.5,70)(185,10)
\qbezier(100,10)(162.5,70)(225,10)
\qbezier(80,10)(100,40)(120,10)
\qbezier(165,10)(185,40)(205,10)

\end{picture}
\end{center}

\item[(CMM)] {\it Both external and internal arcs are mixed}: The external arc is either  $(-i,l)$ or $(i,-l)$  and the internal arc is $(-j,k)$ where $k<l$
 or resp. $k<i$. \\
 (a) If the external arc is $(-i,l)$ one has $L^{\prime\prime}=L_{(-j,k)(-i,l)}^-(-j,l)(-i,j)$ and we have to distinguish two subcases:
\begin{itemize}
\item{(i)} If $k>i$ then $SR_{L^{\prime\prime}}=R_{L'}$ where $L'=L_0(j,-l)(-j,l)(-i,k)(i,-k)$ and $\dim \mathcal B_{L'}=\dim\mathcal B_L-1\ \Rightarrow\ L'\in D(L).$\\
     One has $X_{L'}=\lim\limits_{m\rightarrow\infty}T_k(\frac{1}{m}).T_i(-m).U_{e_i-e_j}(-1).U_{e_l-e_k}(1).X_L$

\begin{center}
\begin{picture}(100,60)(0,-10)
\multiput(-145,10)(5,0){36}%
{\circle*{1}}
\put(-140,10){\circle*{3}}
 \put(-148,2){${}_{-l}$}
\put(-120,10){\circle*{3}}
 \put(-128,2){${}_{-k}$}
 \put(-100,10){\circle*{3}}
 \put(-108,2){${}_{-i}$}
 \put(-80,10){\circle*{3}}
 \put(-88,2){${}_{-j}$}
 \put(-35,10){\circle*{3}}
 \put(-37,2){${}_{j}$}
 \put(-15,10){\circle*{3}}
 \put(-17,2){${}_{i}$}
 \put(5,10){\circle*{3}}
 \put(3,2){${}_{k}$}
\put(25,10){\circle*{3}}
 \put(23,2){${}_{l}$}

 \qbezier(-140,10)(-77.5,70)(-15,10)
 \qbezier(-120,10)(-77.5,50)(-35,10)
 \qbezier(-80,10)(-37.5,50)(5,10)
 \qbezier(-100,10)(-37.5,70)(25,10)

 \put(35,12){\vector(2,0){17}}

\multiput(55,10)(5,0){36}
 {\circle*{1}}
 \put(60,10){\circle*{3}}
\put(52,2){${}_{-l}$}
\put(80,10){\circle*{3}}
\put(72,2){${}_{-k}$}
\put(100,10){\circle*{3}}
\put(92,2){${}_{-i}$}
 \put(120,10){\circle*{3}}
 \put(112,2){${}_{-j}$}
 \put(165,10){\circle*{3}}
 \put(163,2){${}_{j}$}
 \put(185,10){\circle*{3}}
 \put(183,2){${}_{i}$}
 \put(205,10){\circle*{3}}
  \put(202,2){${}_{k}$}
 \put(225,10){\circle*{3}}
  \put(223,2){${}_{l}$}

\qbezier(60,10)(112.5,70)(165,10)
\qbezier(80,10)(132.5,70)(185,10)
\qbezier(100,10)(152.5,70)(205,10)
\qbezier(120,10)(172.5,70)(225,10)

\end{picture}
\end{center}

\item{(ii)} If $k<i$ then $SR_{L^{\prime\prime}}=R_{L'}$ where $L'=L_0(j,-l)(-j,l)(-k,k)(-i,i)$,
and $\dim \mathcal B_{L'}=\dim\mathcal B_L-1\ \Rightarrow\ L'\in D(L).$
Set $T_m:=T_l(m).T_j(m)$ and 
$U_m:=U_{e_i-e_k}(1).U_{e_l-e_j}(-1).U_{e_i-e_j}(-1).U_{e_l-e_k}(1).U_{e_k-e_j}(\frac{1}{2}).U_{e_l-e_i}(\frac{1}{2})$ then 
$X_{L'}=\lim\limits_{m\rightarrow\infty}T_m.U_m.X_L.$

 \begin{center}
\begin{picture}(100,70)(0,-10)
%\put(-20,35){$L_\sigma=$}
\multiput(-145,10)(5,0){36}%
{\circle*{1}}
\put(-140,10){\circle*{3}}
 \put(-148,2){${}_{-l}$}
\put(-120,10){\circle*{3}}
 \put(-128,2){${}_{-i}$}
 \put(-100,10){\circle*{3}}
 \put(-108,2){${}_{-k}$}
 \put(-80,10){\circle*{3}}
 \put(-88,2){${}_{-j}$}
 \put(-35,10){\circle*{3}}
 \put(-37,2){${}_{j}$}
 \put(-15,10){\circle*{3}}
 \put(-17,2){${}_{k}$}
 \put(5,10){\circle*{3}}
 \put(3,2){${}_{i}$}
\put(25,10){\circle*{3}}
 \put(23,2){${}_{l}$}

 \qbezier(-140,10)(-67.5,70)(5,10)
 \qbezier(-120,10)(-47.5,70)(25,10)
 \qbezier(-100,10)(-67.5,40)(-35,10)
 \qbezier(-80,10)(-47.7,40)(-15,10)

 \put(35,12){\vector(2,0){17}}

\multiput(55,10)(5,0){36}
 {\circle*{1}}
 \put(60,10){\circle*{3}}
\put(52,2){${}_{-l}$}
\put(80,10){\circle*{3}}
\put(72,2){${}_{-i}$}
\put(100,10){\circle*{3}}
\put(92,2){${}_{-k}$}
 \put(120,10){\circle*{3}}
 \put(112,2){${}_{-j}$}
 \put(165,10){\circle*{3}}
 \put(163,2){${}_{j}$}
 \put(185,10){\circle*{3}}
 \put(183,2){${}_{k}$}
 \put(205,10){\circle*{3}}
  \put(202,2){${}_{i}$}
 \put(225,10){\circle*{3}}
  \put(223,2){${}_{l}$}

\qbezier(60,10)(112.5,60)(165,10)
\qbezier(120,10)(172.5,60)(225,10)
\qbezier(80,10)(142.5,80)(205,10)
\qbezier(100,10)(142.5,60)(185,10)

\end{picture}
\end{center}
\end{itemize}

 \noindent
 (b) The external arc is $(i,-l)$. Then $L^{\prime\prime}=L^-_{(-j,k),(i,-l)}(-j,i)(k,-l)$ and
there is no $L'$ such that $SR_{L^{\prime\prime}}$ is $R_{L'}$. Two maximal SLPs $\{L'_1, L'_2\}$ 
such that $R_{L'_s}\prec SR_{L^{\prime\prime}}$  are $L'_1=L_0(j,-i)(-j,i)(-k,l)(k,-l)$ and $L_2'=L'$ from (ii) of (a), and $s=1,2$ one has
$\dim \mathcal B_{L'_s}=\dim\mathcal B_L-1\ \Rightarrow\ L_1',L_2'\in D(L).$\\ 
Here $ X_{L_1'}= \lim\limits_{m\rightarrow\infty}T_l(-m).T_k(\frac{1}{m}).U_{e_l-e_j}(-1).U_{e_i-e_k}(1).X_l$.\\
Picture for  the new case:

\begin{center}
\begin{picture}(100,60)(0,-10)
%\put(-20,35){$L_\sigma=$}
\multiput(-145,10)(5,0){36}%
{\circle*{1}}
\put(-140,10){\circle*{3}}
 \put(-148,2){${}_{-l}$}
\put(-120,10){\circle*{3}}
 \put(-128,2){${}_{-i}$}
 \put(-100,10){\circle*{3}}
 \put(-108,2){${}_{-k}$}
 \put(-80,10){\circle*{3}}
 \put(-88,2){${}_{-j}$}
 \put(-35,10){\circle*{3}}
 \put(-37,2){${}_{j}$}
 \put(-15,10){\circle*{3}}
 \put(-17,2){${}_{k}$}
 \put(5,10){\circle*{3}}
 \put(3,2){${}_{i}$}
\put(25,10){\circle*{3}}
 \put(23,2){${}_{l}$}

 \qbezier(-140,10)(-67.5,70)(5,10)
 \qbezier(-120,10)(-47.5,70)(25,10)
 \qbezier(-100,10)(-67.5,40)(-35,10)
 \qbezier(-80,10)(-47.7,40)(-15,10)

 \put(35,12){\vector(2,0){17}}

\multiput(55,10)(5,0){36}
 {\circle*{1}}
 \put(60,10){\circle*{3}}
\put(52,2){${}_{-l}$}
\put(80,10){\circle*{3}}
\put(72,2){${}_{-i}$}
\put(100,10){\circle*{3}}
\put(92,2){${}_{-k}$}
 \put(120,10){\circle*{3}}
 \put(112,2){${}_{-j}$}
 \put(165,10){\circle*{3}}
 \put(163,2){${}_{j}$}
 \put(185,10){\circle*{3}}
 \put(183,2){${}_{k}$}
 \put(205,10){\circle*{3}}
  \put(202,2){${}_{i}$}
 \put(225,10){\circle*{3}}
  \put(223,2){${}_{l}$}

\qbezier(60,10)(122.5,70)(185,10)
\qbezier(120,10)(162.5,50)(205,10)
\qbezier(80,10)(122.5,50)(165,10)
\qbezier(100,10)(162.5,70)(225,10)

\end{picture}
\end{center}
\end{itemize}
\newpage
 \centerline{Index of Notation}
\bigskip

\begin{tabular}{lllll}
\ref{1.1}&$\mathcal O_x,\, \nil$&\quad&\ref{2.9a}&$\pi;$\\

\ref{1.2}&$\mathcal B_x$&\quad&\ref{4.2}&$ b_L(f),\, b(L),\, c_L(\left\langle i,j\right\rangle),\,c(L),$\\

\ref{1.3}&$\B_{SL_n},\, \mathfrak n_{\Sl_n},\, R,\, R^+,\, \Pi,\, Y_{(i,j)}, 
$&\quad&&$d(k,n),\, f_L([s,t]);$\\

&$l(L),\, {\bf LP}_n,\, {\bf LP}_n(k),\,Y_L,\, \mathfrak B(Y_L),$&&
\ref{4.1a}&$A(L),\, C(L),\, D(L),\, N(L),\, \widehat A(L),$\\ 

&$|S|,\,R_L,\,\preceq,\, \mathcal Y_n^{(2)},\, \overline{\mathcal A};$&&&$\widehat C(L),\, \widehat D(L),\, 
\widehat N(L),\, SR_L,\, s',\,t'$\\

\ref{1.4}&$\Phi,\, \Phi^+,\,\Delta,\,{\bf SLP}_{2n},\,{\bf SLP}_{2n}(k),$
&&&$L^-_{(i_1,j_1)\ldots(i_k,j_k)},\,(i,j)L,\,L(i,j);$\\
&$\B_{2n},\,X_\alpha,\,\nil_{2n},\, X_L,\, \mathcal X_{2n}^{(2)},\, \mathcal B_L;$&&\ref{5.2}&$E(L),\, E'(L)$\\
\ref{1.5}&$\mathcal V,\, \mathcal V_T;$&&\ref{5.3}&$l_i,\, r_j;$\\
\ref{2.3}&${\bf T}_{2n},\,{\bf U}_{2n},\, T_i(a),\, U_{\alpha}(a);$&&\ref{a1}&$[i:i],\,{\rm SDT};$\\
\ref{2.5}&$\lambda,\, \mathcal P(n),\, \mathcal P_1(2n),\, \mathcal O_\lambda,\, s_i,\, r_i,\, \lambda\geq \mu;$&&\ref{a2}&$w(\nil),\, \mathcal V_w,\, T_1,\, T_2,\, w_T,\ \mathcal V_T;$\\
\ref{2.8}&$(s,t)\in L,\, Ep(L),\, Fp(L),\,\left\langle i,j\right\rangle,$&&\ref{a3}&$L_T;$\\
&$(\pm i,\pm j),\, X_{(\pm i,j)},\, X_{(-i,i)},$&&\ref{a4}&$X_T,\,\mathcal B_T;$\\
&$Ep^+(L),\, Fp^+(L);$&&\\ 
\end{tabular}
\bigskip

\end{document}